\documentclass[11pt,a4paper]{article}
\usepackage[utf8x]{inputenc}
\usepackage{amsmath}
\usepackage{amsfonts}
\usepackage{amsthm}
\usepackage{amssymb}
\usepackage{graphicx}
\usepackage[margin=1in]{geometry}
\usepackage{hyperref}

\usepackage{mathtools}
\usepackage{apptools}
\usepackage{xcolor}
\usepackage{enumitem}
\usepackage{mathabx}
\usepackage{cases}
\usepackage{stmaryrd}
\usepackage[framemethod=tikz]{mdframed}

\newtheorem{thm}{Theorem}
\newtheorem{lem}[thm]{Lemma}
\newtheorem{prop}[thm]{Proposition}

\newtheorem{assume}{Assumption}

\theoremstyle{definition}

\newtheorem{rmk}[thm]{Remark}

\setlist[enumerate]{label=$\rm{(\roman*)}$,leftmargin=\parindent}
\numberwithin{equation}{section}
\numberwithin{thm}{section}
\numberwithin{hypo}{section}
\numberwithin{table}{section}
\numberwithin{figure}{section}

\newcommand{\sR}{\mathbb{R}}
\newcommand{\sL}{\mathbb{L}}
\newcommand{\sX}{\mathcal{X}}
\newcommand{\sY}{\mathcal{Y}}

\newcommand{\sol}{\mathbb{S}}
\newcommand{\Fea}{\mathbb{F}}

\newcommand{\mysum}{\displaystyle \sum\limits}

\newcommand{\bO}{\mathcal{O}}
\newcommand{\sB}{\mathbb{B}}


\newcommand{\E}{\mathcal{E}}
\newcommand{\G}{\mathcal{G}}

\newcommand{\Lag}{\mathcal{L}}
\newcommand{\Lb}{\mathcal{L}_{\beta}}

\newcommand{\TL}{\mathcal{T}_{\Lag}}


\newcommand{\bz}{\widetilde{z}}
\newcommand{\bt}{\widetilde{t}}
\newcommand{\bu}{\widetilde{u}}

\newcommand{\bzeta}{\widetilde{\zeta}}
\newcommand{\brho}{\widetilde{\rho}}

\newcommand{\tx}{\widetilde{x}}

\newcommand{\tlambda}{\widetilde{\lambda}}

\newcommand{\Csup}{C_{0}}
\newcommand{\Cbnd}{C_{4}}
\newcommand{\CLag}{C_{1}}
\newcommand{\Cv}{C_{2}}
\newcommand{\Cite}{C_{3}}
\newcommand{\Cint}{C_{5}}
\newcommand{\Cgra}{C_{6}}
\newcommand{\Cfea}{C_{7}}
\newcommand{\Cacc}{C_{8}}
\newcommand{\Czms}{C_{9}}
\newcommand{\Tm}{T_{\max}}

\title{Improved convergence rates and trajectory convergence for primal-dual dynamical systems with vanishing damping}
\author{Radu Ioan Bo\c{t}\footnote{Faculty of Mathematics, University of Vienna, Oskar-Morgenstern-Platz 1, 1090 Vienna, Austria, e-mail: \url{radu.bot@univie.ac.at}.}
\and Dang-Khoa Nguyen\footnote{Faculty of Mathematics, University of Vienna, Oskar-Morgenstern-Platz 1, 1090 Vienna, Austria, e-mail: \url{dang-khoa.nguyen@univie.ac.at}}}

\begin{document}
	
\maketitle	
	
\begin{abstract}
In this work, we approach the minimization of a continuously differentiable convex function under linear equality constraints by a second-order dynamical system with asymptotically vanishing damping term. The system is formulated in terms of the augmented Lagrangian associated to the minimization problem. We show fast convergence of the primal-dual gap, the feasibility measure, and the objective function value along the generated trajectories. In case the objective function has Lipschitz continuous gradient, we show that the primal-dual trajectory asymptotically weakly converges to a primal-dual optimal solution of the underlying minimization problem. To the best of our knowledge, this is the first result which guarantees the convergence of the trajectory generated by a primal-dual dynamical system with asymptotic vanishing damping. Moreover, we will rediscover in case of the unconstrained minimization of a convex differentiable function with Lipschitz continuous gradient all convergence statements obtained in the literature for Nesterov's accelerated gradient method.
\end{abstract}	

\noindent \textbf{Key Words.} Augmented Lagrangian method, primal-dual dynamical system, damped inertial dynamics, Nesterov's accelerated gradient method, Lyapunov analysis, convergence rate, trajectory convergence
\vspace{1ex}

\noindent \textbf{AMS subject classification.} 37N40, 46N10, 65K10, 90C25
	
\section{Introduction}	
	
\subsection{Problem statement and motivation}	
	
In this paper we will deal with the optimization problem
\begin{equation}
	\label{intro:pb}
	\begin{array}{rl}
		\min & f \left( x \right), \\
		\textrm{subject to} 	& Ax = b
	\end{array}
\end{equation}
where
\begin{equation}
	\label{intro:hypo}
	\begin{cases}
		\sX, \sY \textrm{ are real Hilbert spaces}; \\
		f \colon \sX \to \sR \textrm{ is a continuously differentiable convex function}; \\
		A \colon \sX \to \sY \textrm{ is a continuous linear operator and } b \in \sY; \\
		\textrm{the set } \mathbb{S} \textrm{ of primal-dual optimal solutions of } \eqref{intro:pb} \textrm{ is assumed to be  nonempty}.
	\end{cases}
\end{equation}

Problems of type \eqref{intro:pb} underlie many important applications in various areas, such as image recovery \cite{Goldstein-ODonoghue-Setzer-Baraniuk}, machine learning \cite{Boyd-et.al,Lin-Li-Fang}, the energy dispatch of power grids \cite{Yi-Hong-Liu:15,Yi-Hong-Liu:16}, distributed optimization \cite{Madan-Lall,Zeng-Yi-Hong-Xie} and network optimization \cite{Shi-Johansson,Zeng-Lei-Chen}.

The object of our investigations will be a second-order dynamical system with asymptotic vanishing damping term associated with the optimization problem \eqref{intro:pb} and formulated in terms of its augmented Lagrangian. Our main aim is to study the asymptotic behaviour of the generated trajectories to a primal-dual optimal solution as well as to derive fast rates of convergence for the primal-dual gap, the feasibility measure, and the objective function value along these.

The interplay between continuous-time dissipative dynamical systems and numerical algorithms for solving optimization problems has been subject of an intense research activity. It is well-known for unconstrained optimization problems that damped inertial dynamics are a natural way to accelerate these systems. In line with the seminal work of Polyak on the heavy ball method with friction \cite{Polyak,Polyak:book}, the first studies by Alvarez and Attouch focused on inertial dynamics with fixed viscous damping coefficient \cite{Alvarez, Alvarez-Attouch-Bolte-Redont,Attouch-Goudou-Redont}. A decisive step was taken by Su, Boyd and Cand\`{e}s in \cite{Su-Boyd-Candes}, where, for the minimization of a continuously differentiable convex function $f: \sX \rightarrow \sR$, the following inertial dynamics with an asymptotically vanishing damping coefficient has been considered
\begin{equation}
\tag{$\mathrm{AVD}$}
\label{ds:AVD}
\ddot{x} \left( t \right) + \dfrac{\alpha}{t} \dot{x} \left( t \right) + \nabla f \left( x \left( t \right) \right)   = 0.
\end{equation}
The terminology asymptotic vanishing damping (AVD) refers to the specific characteristic of the damping coefficient $\dfrac{\alpha}{t}$ to vanish in a controlled manner, neither too fast nor too slowly, as $t$ goes to infinity. In particular, in the case $\alpha = 3$, this dynamical system can be seen as the continuous limit of Nesterov’s accelerated gradient algorithm \cite{Nesterov:83,Nesterov:13,FISTA}. In the last years, the community paid a lot of attention to the topic of inertial dynamics \cite{Attouch-Cabot:17,Attouch-Cabot-Chbani-Riahi,Attouch-Chbani-Peypouquet-Redont,Attouch-Chbani-Riahi:ESAIM,Attouch-Peypouquet-Redont,Bot-Csetnek,Bot-Csetnek-Laszlo, Bot-Csetnek-Laszlo:18,Jendoubi-May,May}, as well as of their discrete counterparts \cite{Apidopoulos-Aujol-Dossal,Attouch-Cabot:18,Attouch-Chbani-Fadili-Riahi:20,Attouch-Peypouquet:16,Aujol-Dossal,Chambolle-Dossal}, to name only a few.

The augmented Lagrangian Method (ALM) \cite{Rockafellar:MOOR} (for linearly constrained problems), the Alternating Direction Method of Multipliers (ADMM) \cite{Gabay-Meicer,Boyd-et.al} (for problems with separable objectives and block variables linearly coupled in the constraints) and some of their variants have proved to be very suitable when solving large-scale structured convex optimization problems. Since the primal-dual systems of optimality conditions to be solved can be equivalently formulated as monotone inclusion problems, see \cite{Rockafellar:70,Rockafellar:MOOR,Rockafellar:SICON}, the above-mentioned methods are intimately linked with numerical algorithms designed to find a zero of a maximally monotone operator. This close connection has been used in recent works addressing the acceleration of ADMM/ALM methods via inertial dynamics.  In \cite{Bot-Csetnek:iADMM}, for instance, an inertial ADMM numerical algorithm has been proposed originating in the inertial version of the Douglas-Rachford splitting method for monotone inclusion problems introduced in \cite{Bot-Csetnek-Hendrich}. Recently, Attouch has proposed in \cite{Attouch} an inertial proximal ADMM algorithm, relying on the general scheme from \cite{Attouch-Peypouquet:19} designed to solve general monotone inclusions and in lines with  \cite{Attouch-Soueycatt}, and investigated its fast convergence properties for certain combinations of the viscosity and the proximal parameters. However, the inertial proximal ADMM algorithm fails to be a full splitting method.

Continuous-time approaches for structured convex minimization problems formulated in the spirit of the full splitting paradigm have been recently addressed in \cite{Bot-Csetnek-Laszlo:20} and, closely connected to our approach, in \cite{Zeng-Lei-Chen,He-Hu-Fang,Attouch-Chbani-Fadili-Riahi}, to which we will have a closer look in Subsection \ref{pdvanishing}.

\subsection{Our contributions} 

For a primal-dual dynamical system with asymptotically vanishing damping term associated to the augmented Lagrangian formulation of  \eqref{intro:pb} we will show fast convergence for the primal-dual gap, the feasibility measure, and the objective function value along the generated trajectories, and, consequently, improve existing results in the literature. We will prove the existence and uniqueness of the trajectories as global twice continuously differentiable solutions of the dynamical system provided the gradient of the objective function is Lipschitz continuous. In the same setting, we will also prove that the primal-dual trajectory asymptotically weakly converges to a primal-dual optimal solution of  \eqref{intro:pb}, which is the first result of this type in the literature addressing such dynamical systems.

Last but not least, we will show how the asymptotic analysis and the obtained results can be straightforwardly transferred to continuous-time methods with vanishing damping terms approaching optimization problems with separable objectives and block variables linearly coupled in the constraints. Moreover, we will rediscover in case of the unconstrained minimization of a convex differentiable function with Lipschitz continuous gradient all convergence statements obtained in the literature for Nesterov's accelerated gradient method introduced in \cite{Su-Boyd-Candes, Attouch-Chbani-Peypouquet-Redont}.

\subsection{Notations and a preliminary result}

For both Hilbert spaces $\sX$ and $\sY$, the Euclidean inner product and the associated norm will be denoted by $\left\langle \cdot , \cdot \right\rangle$ and $\left\lVert \cdot \right\rVert$, respectively. The Cartesian product $\sX \times \sY$ will be endowed with the inner product and the associated norm defined for $\left( x , \lambda \right) , \left( z , \mu \right) \in \sX \times \sY$ as
\begin{equation*}
\left\langle \left( x , \lambda \right) , \left( z , \mu \right) \right\rangle
= \left\langle x , z \right\rangle + \left\langle \lambda , \mu \right\rangle
\qquad \textrm{ and } \qquad
\left\lVert \left( x , \lambda \right) \right\rVert
= \sqrt{\left\lVert x \right\rVert ^{2} + \left\lVert \lambda \right\rVert ^{2}},
\end{equation*}
respectively. The closed ball centered at $x \in \sX$ with radius $\varepsilon > 0$ will be denoted by $\sB \left( x ; \varepsilon \right) := \left\lbrace y \in \sX \colon \left\lVert x - y \right\rVert \leq \varepsilon \right\rbrace$.

Let $f \colon \sX \to \sR$ be a continuously differentiable convex function such that $\nabla f$ is $\ell-$Lipschitz continuous.
For every $x, y \in \sX$ it holds (see \cite[Theorem 2.1.5]{Nesterov:book})
\begin{equation}
\label{pre:f-bound}
0 \leq \dfrac{1}{2 \ell} \left\lVert \nabla f \left( x \right) - \nabla f \left( y \right) \right\rVert ^{2} \leq f \left( x \right) - f \left( y \right) - \left\langle \nabla f \left( y \right) , x - y \right\rangle \leq \dfrac{\ell}{2} \left\lVert x - y \right\rVert ^{2} .
\end{equation}

\section{The primal-dual dynamical approach with vanishing damping}
\label{sec:system}

\subsection{Augmented Lagrangian formulation}

Consider the saddle point problem
\begin{equation}
\label{intro:sd}
\min_{x \in \sX} \max_{\lambda \in \sY} \Lag \left( x , \lambda \right)
\end{equation}
associated to problem \eqref{intro:pb}, where $\Lag \colon \sX \times \sY \to \sR$ denotes the Lagrangian function
\begin{equation*}
\Lag \left( x , \lambda \right) := f \left( x \right) + \left\langle \lambda , Ax - b \right\rangle .
\end{equation*}
Under the assumptions \eqref{intro:hypo}, $\Lag$ is convex with respect to $x \in \sX$ and affine with respect to $\lambda \in \sY$. A pair $\left( x_{*} , \lambda_{*} \right) \in \sX \times \sY$ is said to be a saddle point of the Lagrangian function $\Lag$ if for every $\left( x , \lambda \right) \in \sX \times \sY$
\begin{equation}
\label{intro:sadde-point}
\Lag \left( x_{*} , \lambda \right) \leq \Lag \left( x_{*} , \lambda_{*} \right) \leq \Lag \left( x , \lambda_{*} \right).
\end{equation}
If $\left( x_{*} , \lambda_{*} \right) \in \sX \times \sY$ is a saddle point of $\Lag$ then $x_{*} \in \sX$ is an optimal solution of \eqref{intro:pb}, and $\lambda_{*} \in \sY$ is an optimal solution of its Lagrange dual problem. If $x_{*} \in \sX$ is an optimal solution of \eqref{intro:pb} and a suitable constraint qualification is fulfilled, then there exists an optimal solution $\lambda_{*} \in \sY$ of the Lagrange dual problem such that $\left( x_{*} , \lambda_{*} \right) \in \sX \times \sY$ is a saddle point of $\Lag$. For details and insights into the topic of constraint qualifications for convex duality we refer to \cite{Bauschke-Combettes:book,Bot:book}.

The set of saddle points of $\Lag$, called also primal-dual optimal solutions of \eqref{intro:pb}, will be denoted by $\sol$ and, as stated in the assumptions, it will be assumed to be nonempty.  The set of feasible points of \eqref{intro:pb} will be denoted by $\Fea := \left\lbrace x \in \sX \colon Ax = b \right\rbrace$ and the optimal objective value of \eqref{intro:pb}  by $f_*$.

The system of primal-dual optimality conditions for \eqref{intro:pb} reads
\begin{equation}
\label{intro:opt-Lag}
\left( x_{*} , \lambda_{*} \right) \in \sol
\Leftrightarrow \begin{cases}
\nabla_{x} \Lag \left( x_{*} , \lambda_{*} \right) 			& = 0 \\
\nabla_{\lambda} \Lag \left( x_{*} , \lambda_{*} \right) 	& = 0
\end{cases} \Leftrightarrow \begin{cases}
\nabla f \left( x_{*} \right) + A^* \lambda_{*} 	& = 0 \\
Ax_{*} - b 													& = 0
\end{cases},
\end{equation}
where $A^* : \sY \rightarrow \sX$ denotes the adjoint operator of $A$.

For $\beta \geq 0$, we consider also the augmented Lagrangian $\Lb \colon \sX \times \sY \to \sR$ associated with \eqref{intro:pb} 
\begin{equation}
	\label{intro:aug-Lag}
\Lb \left( x , \lambda \right) := \Lag \left( x , \lambda \right) + \dfrac{\beta}{2} \left\lVert Ax - b \right\rVert ^{2} = f \left( x \right) + \left\langle \lambda , Ax - b \right\rangle + \dfrac{\beta}{2} \left\lVert Ax - b \right\rVert ^{2} .
\end{equation}
For every $(x, \lambda) \in \Fea \times \sY$ it holds
\begin{equation}
\label{intro:Fea:eq}
f \left( x \right) = \Lb \left( x , \lambda \right) = \Lag \left( x , \lambda \right) .
\end{equation}
If $\left( x_{*} , \lambda_{*} \right) \in \sol$, then we have for every $\left( x , \lambda \right) \in \sX \times \sY$
\begin{equation*}
\Lag \left( x_{*} , \lambda \right) = \Lb \left( x_{*} , \lambda \right) 
\leq \Lag \left( x_{*} , \lambda_{*} \right) = \Lb \left( x_{*} , \lambda_{*} \right) 
\leq \Lag \left( x , \lambda_{*} \right) \leq \Lb \left( x , \lambda_{*} \right).
\end{equation*}
In addition,
\begin{equation}
\label{intro:opt-Lb}
\left( x_{*} , \lambda_{*} \right) \in \sol
\Leftrightarrow \begin{cases}
\nabla_{x} \Lb \left( x_{*} , \lambda_{*} \right) 			& = 0 \\
\nabla_{\lambda} \Lb \left( x_{*} , \lambda_{*} \right) 	& = 0
\end{cases} \Leftrightarrow \begin{cases}
\nabla f \left( x_{*} \right) + A^* \lambda_{*} 	& = 0 \\
Ax_{*} - b 													& = 0
\end{cases} .
\end{equation}

\subsection{Associated monotone inclusion problem}

The optimality system \eqref{intro:opt-Lag} can be equivalently written  as
\begin{equation}\label{moninclusion}
\TL \left( x_{*} , \lambda_{*} \right) = 0 ,
\end{equation}
where
\begin{equation}\label{TL}
\TL \colon \sX \times \sY \to \sX \times \sY, \quad \TL \left( x , \lambda \right)
= \begin{pmatrix}
\nabla_{x} \Lag \left( x , \lambda \right) \\ - \nabla_{\lambda} \Lag \left( x , \lambda \right)
\end{pmatrix}
= \begin{pmatrix}
\nabla f \left( x \right) + A^* \lambda \\ b-Ax
\end{pmatrix},
\end{equation}
is the maximally monotone operator associated with the convex-concave function $\Lag$. Indeed, it is immediate to verify that $\TL$ is monotone. Since it is also continuous, it is maximally monotone (see, for instance, \cite[Corollary 20.28]{Bauschke-Combettes:book}). Therefore $\sol$ can be interpreted as the set of zeros of the maximally
monotone operator $\TL$, which means that it is a closed convex subset of $\sX \times \sY$ (see, for instance, \cite[Proposition 23.39]{Bauschke-Combettes:book}).

Applying the fast continuous-time approaches recently proposed in \cite{Attouch-Peypouquet:19,Attouch} to the solving of \eqref{moninclusion} would require the use of the Moreau-Yosida approximation of the operator $\TL$, for which in general no close formula is available. The resulting dynamical system would therefore not be formulated in the spirit of the full splitting algorithm, which is undesirable from the point of view of numerical computations.

\subsection{The primal-dual dynamical system with vanishing damping}\label{pdvanishing}

The dynamical system which we associate to \eqref{intro:pb} and investigate in this paper reads
\begin{mdframed}	
	\begin{equation}
		\tag{$\mathrm{PD}$-$\mathrm{AVD}$}
		\label{ds:PD-AVD}
		\begin{dcases}
			\ddot{x} \left( t \right) + \dfrac{\alpha}{t} \dot{x} \left( t \right) + \nabla_{x} \Lb \Bigl( x \left( t \right) , \lambda \left( t \right) + \theta t \dot{\lambda} \left( t \right) \Bigr)  		& = 0 \\
			\ddot{\lambda} \left( t \right) + \dfrac{\alpha}{t} \dot{\lambda} \left( t \right) - \nabla_{\lambda} \Lb \Bigl( x \left( t \right) + \theta t \dot{x} \left( t \right) , \lambda \left( t \right) \Bigr) 	& = 0 \\
\Bigl( x \left( t_{0} \right) , \lambda \left( t_{0} \right) \Bigr) 			= \Bigl( x_{0} , \lambda_{0} \Bigr) \textrm{ and }
\Bigl( \dot{x} \left( t_{0} \right) , \dot{\lambda} \left( t_{0} \right) \Bigr) = \Bigl( \dot{x}_{0} , \dot{\lambda}_{0} \Bigr)
		\end{dcases},
	\end{equation}
\end{mdframed}
where $t_0 >0$, $\alpha \geq 3$, $\beta \geq 0$, $\theta >0$ and $(x_0, \lambda_0), (\dot{x}_{0} , \dot{\lambda}_{0}) \in \sX \times \sY$.

Our system is a particular case of the Temporally Rescaled Inertial Augmented Lagrangian System (TRIALS) proposed by Attouch, Chbani, Fadili and Riahi in \cite{Attouch-Chbani-Fadili-Riahi}
\begin{equation}
\tag{$\mathrm{TRIALS}$}
\label{ds:TRIALS}
\begin{dcases}
\ddot{x} \left( t \right) + \gamma \left( t \right) \dot{x} \left( t \right) + b \left( t \right) \nabla_{x} \Lb \Bigl( x \left( t \right) , \lambda \left( t \right) + \theta \left( t \right) \dot{\lambda} \left( t \right) \Bigr)  		& = 0 \\
\ddot{\lambda} \left( t \right) + \gamma \left( t \right) \dot{\lambda} \left( t \right) - b \left( t \right) \nabla_{\lambda} \Lb \Bigl( x \left( t \right) + \theta \left( t \right) \dot{x} \left( t \right) , \lambda \left( t \right) \Bigr) 	& = 0 
\end{dcases},
\end{equation}
where $\gamma, \theta, b \colon \left[ t_{0} , + \infty \right) \to (0,+\infty)$ are continuously differentiable functions. 
The case when $b$ is identically $1$ was also studied by He, Hu and Fang in \cite{He-Hu-Fang}. In \cite{Attouch-Chbani-Fadili-Riahi,He-Hu-Fang} the authors have actually investigated the minimization of the sum of two separable functions with the block variables linked by linear constraints, however, we will see in the next subsection that our analysis can be easily extended to this setting. 

The viscous damping function $\gamma(\cdot)$ is vital in achieving fast convergence and its role has been already well-understood in unconstrained minimization \cite{Attouch-Cabot:17,Attouch-Cabot-Chbani-Riahi,Jendoubi-May} (see also \cite{Attouch-Chbani-Peypouquet-Redont,Attouch-Chbani-Riahi:ESAIM,May} for the case when $\gamma \left( t \right) := \dfrac{\alpha}{t}$). The role of the extrapolation function $\theta(\cdot)$ is to induce more flexibility in the dynamical system and in the associated discrete schemes, as it has been recently noticed in \cite{Attouch-Chbani-Fadili-Riahi,Attouch-Chbani-Riahi:Opti,He-Hu-Fang,Zeng-Lei-Chen}.  The time scaling function $b(\cdot)$ has the role to further improve the rates of convergence of the objective function value along the trajectory, as it was noticed in the context of uncostrained minimization problems in \cite{Attouch-Chbani-Fadili-Riahi:20,Attouch-Chbani-Riahi:SIOPT,Attouch-Chbani-Riahi:PAFA} and of linearly constrained minimization problems in  \cite{Attouch-Balhag-Chbani-Riahi}.

The dynamical system \eqref{ds:PD-AVD} is \eqref{ds:TRIALS} for
\begin{equation*}
\gamma(t) := \dfrac{\alpha}{t} , \qquad \theta(t) := \theta t \qquad \textrm{ and } \qquad b(t) := 1 \qquad \forall t \geq t_{0} ,
\end{equation*}
where $\alpha \geq 3$ and $\theta > 0$. A setting which is closely related to ours can be found in the work \cite{Zeng-Lei-Chen} of Zeng, Lei and Chen.  However, when compared to \cite{Attouch-Chbani-Fadili-Riahi,He-Hu-Fang,Zeng-Lei-Chen}, we provide improved convergence rates and also prove weak convergence of the trajectories to a primal-dual optimal  solution. We also expect that our analysis can be adapted to the more general system  \eqref{ds:TRIALS}, though, we prefer the particular setting of \eqref{ds:PD-AVD}, in order to keep the presentation more simple and easier to follow.

Since our system is a particular instance of \eqref{ds:TRIALS}, we could have relied on the results showing the existence and uniqueness of a strong global solution from \cite{Attouch-Chbani-Fadili-Riahi}. We will prove instead the existence and uniqueness of the trajectories as global twice continuously differentiable solutions of \eqref{ds:PD-AVD}, provided $\nabla f$ is Lipschitz continuous.

Replacing the expressions of the partial gradients of $\Lb$ into the system leads to the following formulation for \eqref{ds:PD-AVD}
\begin{equation}
\label{ds:Cauchy}
\begin{dcases}
\ddot{x} \left( t \right) + \dfrac{\alpha}{t} \dot{x} \left( t \right) + \nabla f \left( x \left( t \right) \right) + A^* \left( \lambda \left( t \right) + \theta t \dot{\lambda} \left( t \right) \right) + \beta A^* \Bigl( Ax \left( t \right) - b \Bigr)		& = 0 \\
\ddot{\lambda} \left( t \right) + \dfrac{\alpha}{t} \dot{\lambda} \left( t \right) - \Bigl( A \bigl( x \left( t \right) + \theta t \dot{x} \left( t \right) \bigr) - b \Bigr) 	& = 0 \\
\Bigl( x \left( t_{0} \right) , \lambda \left( t_{0} \right) \Bigr) 			= \Bigl( x_{0} , \lambda_{0} \Bigr) \textrm{ and }
\Bigl( \dot{x} \left( t_{0} \right) , \dot{\lambda} \left( t_{0} \right) \Bigr) = \Bigl( \dot{x}_{0} , \dot{\lambda}_{0} \Bigr)
\end{dcases}.
\end{equation}

\subsection{Extension to multi-block optimization problems}
\label{subsec:multi-block}

For $m \geq 2$ a positive integer,  we consider the minimization of a separable objective function with respect to linearly coupled block variables 
\begin{equation}
\label{intro:pb-mb}
\begin{array}{rl}
\min	& f_{1} \left( x_{1} \right) + \cdots + f_{m} \left( x_{m} \right), \\
\qquad \ \textrm{subject to} & A_{1} x_{1} + \cdots + A_{m} x_{m} = b
\end{array}
\end{equation}
where
\begin{equation}
\label{intro:hypo-mb}
\begin{cases}
\sX_{i}, i=1, ..., m, \ \textrm{and} \ \sY \ \textrm{are real Hilbert spaces}; \\
f_{i} \colon \sX_{i} \to \sR, i=1, ..., m, \ \textrm{ are continuously differentiable convex functions}; \\
A_{i} \colon \sX_{i} \to \sY, i=1, ..., m, \textrm{ are continuous linear operators and } b \in \sY; \\
\textrm{the set of primal-dual optimal solutions of} \ \eqref{intro:pb-mb} \ \textrm{ is nonempty}.
\end{cases}
\end{equation}

Let $\sX := \sX_{1} \times \cdots \times \sX_{m}$ be the Cartesian product of the real Hilbert spaces $\sX_i, i=1, ..., m$, endowed with inner product and associated norm defined for $x := \left( x_{1} , \cdots , x_{m} \right) , z := \left( z_{1} , \cdots , z_{m} \right) \in  \sX$ as
\begin{equation*}
\left\langle x , z \right\rangle  = \mysum_{i=1}^{m} \left\langle x_{i} , z_{i} \right\rangle
\qquad \textrm{ and } \qquad
\left\lVert x \right\rVert
= \sqrt{\mysum_{i=1}^{m} \left\lVert x_{i} \right\rVert ^{2}} .
\end{equation*}
The multi-block optimization problem \eqref{intro:pb-mb} can be equivalently written as \eqref{intro:pb}, for the separable objective function
\begin{equation*}
f \colon \sX \to \sR, \quad f \left( x \right) = f \left( x_{1} , \cdots , x_{m} \right) := \mysum_{i=1}^{m} f_{i} \left( x_{i} \right) ,
\end{equation*}
and the continuous linear operator
\begin{equation*}
A \colon \sX \to \sY, \quad Ax = A \left( x_{1} , \cdots , x_{m} \right) = \mysum_{i=1}^{m} A_{i}x_{i}.
\end{equation*}
Since
\begin{equation*}
\nabla f \left( x \right)
=  \begin{pmatrix}
\nabla f_{1} \left( x_{1} \right) \\ \vdots \\ \nabla f_{m} \left( x_{m} \right) 
\end{pmatrix} \quad \mbox{for} \ x=(x_1, ..., x_m),
\end{equation*}
and 
\begin{equation*}
A^* \colon \sY \to \sX = \sX_{1} \times \cdots \times \sX_{m},  \quad A^* \lambda = \begin{pmatrix}
A_{1}^{*} \lambda \\ \vdots \\ A_{m}^{*} \lambda
\end{pmatrix},
\end{equation*}
\eqref{ds:Cauchy} leads to the following dynamical system associated to the multi-block optimization problem \eqref{intro:pb-mb}
\begin{equation*}
\begin{cases}
\ddot{x}_{1} \left( t \right) + \dfrac{\alpha}{t} \dot{x}_{1} \left( t \right) + \nabla f_{1} \left( x_{1} \left( t \right) \right) + A_{1}^{*} \left( \lambda \left( t \right) + \theta t \dot{\lambda} \left( t \right) \right) + \beta A_{1}^{*} \Bigl( \mysum_{i=1}^{m} A_{i}x_{i} \left( t \right) - b \Bigr)		& = 0 \\
\hfill \vdots \hfill \\
\ddot{x}_{m} \left( t \right) + \dfrac{\alpha}{t} \dot{x}_{m} \left( t \right) + \nabla f_{m} \left( x_{m} \left( t \right) \right) + A_{m}^{*} \left( \lambda \left( t \right) + \theta t \dot{\lambda} \left( t \right) \right) + \beta A_{m}^{*} \Bigl( \mysum_{i=1}^{m} A_{i}x_{i} \left( t \right) - b \Bigr)		& = 0 \\
\ddot{\lambda} \left( t \right) + \dfrac{\alpha}{t} \dot{\lambda} \left( t \right) - \Bigl( \mysum_{i=1}^{m} A_{i} \bigl( x_{i} \left( t \right) + \theta t \dot{x}_{i} \left( t \right) \bigr) - b \Bigr) 	& = 0\\
\Bigl(x_1(t_0), ..., x_m(t_0) , \lambda(t_0) \Bigr) 			= \Bigl( x_{10}, ..., x_{m0}, \lambda_{0} \Bigr) \textrm{ and } & \\
 \Bigl(\dot x_1(t_0), ..., \dot x_m(t_0) , \dot \lambda(t_0) \Bigr) 			= \Bigl(\dot x_{10}, ..., \dot x_{m0}, \dot \lambda_{0} \Bigr) &
\end{cases},
\end{equation*}
where $\alpha \geq 3$, $\beta \geq 0$, $\theta >0$ and $( x_{10}, ..., x_{m0}, \lambda_{0}), (\dot x_{10}, ..., \dot x_{m0}, \dot \lambda_{0}) \in \sX_1 \times ... \times \sX_m \times \sY$. By making use of the above construction, all results we will obtain in the paper for \eqref{intro:pb} can be transferred to the multi-block optimization problems \eqref{intro:pb-mb}.

\section{Fast convergence rates}\label{sec:Lya}

In this section we will derive fast convergence rates for the primal-dual gap, the feasibility measure, and the objective function value along the trajectories generated by the dynamical system \eqref{ds:PD-AVD}. Throughout this section we will make the following assumption on the parameters $\alpha$, $\beta$ and $\theta$.
\begin{mdframed}
	\begin{assume}
		\label{assume:para}
		Suppose that $\alpha, \beta$ and $\theta$ in \eqref{ds:PD-AVD} satisfy
		\begin{equation*}
		\alpha \geq 3, \quad  \beta \geq 0
		\quad \textrm{ and } \quad
		\dfrac{1}{2} \geq \theta \geq \dfrac{1}{\alpha - 1}.
		\end{equation*}
	\end{assume}
\end{mdframed}

\subsection{The energy function}

Let $\left( x , \lambda \right) \colon \left[ t_{0} , + \infty \right) \to \sX \times \sY$ be a solution of \eqref{ds:PD-AVD}. For 
$\left( z , \mu \right) \in \sX \times \sY$ fixed,  we define
\begin{equation*}
\G_{\beta} : \sX \times \sY \rightarrow \sR, \quad \G_{\beta} \Bigl( \bigl(x,\lambda \bigr) \Big\vert \left( z , \mu \right) \Bigr)  := \Lb \left(x , \mu \right) - \Lb \left( z , \lambda  \right) .
\end{equation*}
According to \eqref{intro:aug-Lag} and \eqref{intro:Fea:eq}, we have for every $\left( z , \mu \right) \in \Fea \times \sY$ and every $t \geq t_0$
\begin{equation*}
\G_{\beta} \Bigl( \bigl( x \left( t \right) , \lambda \left( t \right) \bigr) \Big\vert \left( z , \mu \right) \Bigr)	 
= f \left( x \left( t \right) \right) - f \left( z \right) + \left\langle \mu , Ax \left( t \right) - b \right\rangle + \dfrac{\beta}{2} \left\lVert Ax \left( t \right) - b \right\rVert ^{2} .
\end{equation*}
When $\left( z , \mu \right) := \left( x_{*} , \lambda_{*} \right) \in \sol$, it holds for every $t \geq t_0$
\begin{align}
\G_{\beta} \Bigl( \bigl( x \left( t \right) , \lambda \left( t \right) \bigr) \Big\vert \left( x_{*} , \lambda_{*} \right) \Bigr)	 
& = \Lb \left( x \left( t \right) , \lambda_{*} \right) - \Lb \left( x_{*} , \lambda \left( t \right) \right) \nonumber \\
& = \Lag \left( x \left( t \right) , \lambda_{*} \right) - \Lag \left( x_{*} , \lambda \left( t \right) \right) + \dfrac{\beta}{2} \left\lVert Ax \left( t \right) - b \right\rVert ^{2} \label{ener:PD-gap:Lag} \\
& = \Lag \left( x \left( t \right) , \lambda_{*} \right) - f(x_*) + \dfrac{\beta}{2} \left\lVert Ax \left( t \right) - b \right\rVert ^{2} \nonumber \\
& = f \left( x \left( t \right) \right) - f_{*} + \left\langle \lambda_{*} , Ax \left( t \right) - b \right\rangle + \dfrac{\beta}{2} \left\lVert Ax \left( t \right) - b \right\rVert ^{2} \geq 0 , \label{ener:PD-gap}
\end{align}
where $f_{*}$ denotes the optimal objective  value of \eqref{intro:pb}.

For $\left( z , \mu \right) \in \sX \times \sY$ fixed, we introduce the energy function ${\E_{z, \mu} \colon \left[ t_{0} , + \infty \right) \to \sR}$ defined as
\begin{equation}
\label{ener:E}
\E_{z, \mu} \left( t \right)
:= \theta^{2} t^{2} \G_{\beta} \Bigl( \bigl( x \left( t \right) , \lambda \left( t \right) \bigr) \Big\vert \left( z , \mu \right) \Bigr) + \dfrac{1}{2} \left\lVert v_{z , \mu} \left( t \right) \right\rVert ^{2} + \dfrac{\xi}{2} \left\lVert \bigl( x \left( t \right) , \lambda \left( t \right) \bigr) - \left( z , \mu \right) \right\rVert ^{2} ,
\end{equation}
where
\begin{align}
v_{z , \mu} \left( t \right)
& := \bigl( x \left( t \right) , \lambda \left( t \right) \bigr) - \left( z , \mu \right) + \theta t \left( \dot{x} \left( t \right) , \dot{\lambda} \left( t \right) \right) , \label{ener:v} \\
\xi & := \theta \alpha - \theta - 1 \geq 0 . \label{ener:xi}
\end{align}

Notice that due to \eqref{ener:PD-gap}, for $\left( x_{*} , \lambda_{*} \right) \in \sol$ we have
\begin{equation}
\label{ener:pos}
\E_{x_{*} , \lambda_{*}} \left( t \right) \geq 0 \qquad \forall t \geq t_{0} .
\end{equation}

\begin{lem}
	\label{lem:dec}
	Let $\left( x , \lambda \right) \colon \left[ t_{0} , + \infty \right) \to \sX \times \sY$ be a solution of \eqref{ds:PD-AVD} and $\left( z , \mu \right) \in \Fea \times \sY$. For every $t \geq t_{0}$ it holds
	\begin{align*}
	\dfrac{d}{dt} \E_{z, \mu} \left( t \right) \leq \left( 2 \theta - 1 \right) \theta t \G_{\beta} \Bigl( \bigl( x \left( t \right) , \lambda \left( t \right) \bigr) \Big\vert \left( z , \mu \right) \Bigr) - \dfrac{\beta \theta t}{2} \left\lVert Ax \left( t \right) - b \right\rVert ^{2} - \xi \theta t \left\lVert \left( \dot{x} \left( t \right) , \dot{\lambda} \left( t \right) \right) \right\rVert ^{2} .
	\end{align*}
\end{lem}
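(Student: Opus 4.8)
The plan is to differentiate $\E_{z,\mu}$ term by term along the trajectory, eliminate the second-order derivatives $(\ddot x, \ddot\lambda)$ via the system \eqref{ds:PD-AVD}, and then combine the convexity of $f$ with the feasibility $Az = b$ of $z \in \Fea$ to arrive at the stated estimate. Throughout I abbreviate $p := (\dot x, \dot\lambda)$ and $w := (x,\lambda) - (z,\mu)$, so that $v_{z,\mu} = w + \theta t\, p$, and I set $G := \bigl(-\nabla_x\Lb(x,\lambda+\theta t\dot\lambda),\, \nabla_\lambda\Lb(x+\theta t\dot x,\lambda)\bigr)$, the vector of nonlinear terms appearing in \eqref{ds:PD-AVD}.

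For the potential term, since $\G_\beta\bigl((x,\lambda)\,\vert\,(z,\mu)\bigr) = \Lb(x,\mu) - \Lb(z,\lambda)$ and $\nabla_\lambda\Lb(z,\lambda) = Az - b = 0$ because $z\in\Fea$, only the $x$-slot contributes, giving $\tfrac{d}{dt}\bigl(\theta^2 t^2\G_\beta\bigr) = 2\theta^2 t\,\G_\beta + \theta^2 t^2\langle\nabla_x\Lb(x,\mu),\dot x\rangle$. For the two quadratic terms I would compute $\dot v_{z,\mu} = (1+\theta)p + \theta t(\ddot x,\ddot\lambda)$ and substitute $(\ddot x,\ddot\lambda)$ from \eqref{ds:PD-AVD}; recalling $\xi = \theta\alpha-\theta-1$, the friction terms collapse into $\dot v_{z,\mu} = -\xi\, p + \theta t\, G$. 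The crucial structural cancellation is that in $\tfrac{d}{dt}\bigl(\tfrac12\|v_{z,\mu}\|^2 + \tfrac{\xi}{2}\|w\|^2\bigr) = \langle v_{z,\mu},\dot v_{z,\mu}\rangle + \xi\langle w,p\rangle$ the cross terms $-\xi\langle w,p\rangle$ and $+\xi\langle w,p\rangle$ annihilate, leaving $\theta t\langle w, G\rangle - \xi\theta t\|p\|^2 + \theta^2 t^2\langle p, G\rangle$.

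It then remains to estimate $\langle w, G\rangle$ from above. Expanding both slots of $G$, using $A(x-z) = Ax-b$ from feasibility, rewriting $f(x)-f(z)+\langle\mu, Ax-b\rangle$ as $\G_\beta - \tfrac{\beta}{2}\|Ax-b\|^2$, and invoking the subgradient inequality $\langle\nabla f(x), x-z\rangle \geq f(x) - f(z)$ to dominate the $\nabla f$-contribution, one obtains $\langle w, G\rangle \leq -\G_\beta - \tfrac{\beta}{2}\|Ax-b\|^2 - \theta t\langle Ax-b,\dot\lambda\rangle + \theta t\langle\lambda-\mu, A\dot x\rangle$. Summing all contributions, the surviving $\G_\beta$-terms combine to $(2\theta^2 t - \theta t)\G_\beta = (2\theta-1)\theta t\,\G_\beta$, the feasibility term is $-\tfrac{\beta\theta t}{2}\|Ax-b\|^2$, and the friction term is $-\xi\theta t\|(\dot x,\dot\lambda)\|^2$, which is exactly the asserted inequality.

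The main obstacle is the bookkeeping of the order-$\theta^2 t^2$ cross terms, which must cancel \emph{exactly}. Concretely, $\theta^2 t^2\langle\nabla_x\Lb(x,\mu),\dot x\rangle + \theta^2 t^2\langle p, G\rangle$ reduces---after the $\nabla f$ and $\beta A^*(Ax-b)$ pieces cancel against their counterparts in $G$---to $\theta^2 t^2\bigl[\langle\mu-\lambda, A\dot x\rangle + \langle\dot\lambda, Ax-b\rangle\bigr]$, and this must telescope against the $\theta^2 t^2$ pieces coming from $\theta t\langle w, G\rangle$, namely $\theta^2 t^2\langle\lambda-\mu, A\dot x\rangle - \theta^2 t^2\langle Ax-b,\dot\lambda\rangle$. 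I expect the delicate point to be tracking precisely which argument each gradient is evaluated at ($\mu$ versus $\lambda+\theta t\dot\lambda$ in the $x$-gradient, and $z$ versus $x+\theta t\dot x$ in the $\lambda$-gradient), since it is exactly the mismatch between these arguments that produces the cross terms whose cancellation closes the proof.
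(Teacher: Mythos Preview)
Your proposal is correct and follows essentially the same approach as the paper's proof: differentiate $\E_{z,\mu}$, eliminate $(\ddot x,\ddot\lambda)$ via the system, verify that the $\theta^2 t^2$ cross terms cancel exactly (the paper does this by showing four specific terms vanish after expanding $\langle v_{z,\mu},\dot v_{z,\mu}\rangle$), and then apply convexity of $f$ together with $Az=b$ to the remaining inner product. The only organizational difference is that the paper packages the nonlinear part as $\nabla\G_\beta\bigl((x,\lambda)\,\big\vert\,(z,\mu)\bigr) = \bigl(\nabla_x\Lb(x,\mu),\,0\bigr)$ plus a correction, whereas you work directly with the system's vector $G$; this shifts where the cross terms appear but does not change the computation or the logic.
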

\begin{proof}
Let $t \geq t_{0}$ be fixed.
Since $z \in \Fea$, we have
\begin{align*}
\nabla \G_{\beta} \Bigl( \bigl( x \left( t \right) , \lambda \left( t \right) \bigr) \Big\vert \left( z , \mu \right) \Bigr)		
& = \begin{pmatrix}
\nabla_{x} \Lb \left( x \left( t \right) , \mu \right) , - \nabla_{\lambda} \Lb \left( z , \lambda \left( t \right) \right)
\end{pmatrix} = \begin{pmatrix}
\nabla_{x} \Lb \left( x \left( t \right) , \mu \right) , 0
\end{pmatrix} \nonumber \\
& = \begin{pmatrix}
\nabla f \left( x \left( t \right) \right) + A^* \mu + \beta A^* \left( Ax \left( t \right) - b \right) , 0
\end{pmatrix} .
\end{align*}

Differentiating $\E$ with respect to $t$ gives
\begin{align}
\dfrac{d}{dt} \E_{x_{*} , \lambda_{*}} \left( t \right) 
=  & \ 2 \theta^{2} t \G_{\beta} \Bigl( \bigl( x \left( t \right) , \lambda \left( t \right) \bigr) \Big\vert \left( z , \mu \right) \Bigr) + \theta^{2} t^{2} \left\langle \nabla \G_{\beta} \Bigl( \bigl( x \left( t \right) , \lambda \left( t \right) \bigr) \Big\vert \left( z , \mu \right) \Bigr) , \left( \dot{x} \left( t \right) , \dot{\lambda} \left( t \right) \right) \right\rangle \nonumber \\
& \ + \left\langle v_{z , \mu} \left( t \right) , \dot{v}_{z , \mu} \left( t \right) \right\rangle + \xi \left\langle \bigl( x \left( t \right) , \lambda \left( t \right) \bigr) - \left( z , \mu \right) , \left( \dot{x} \left( t \right) , \dot{\lambda} \left( t \right) \right) \right\rangle . \label{dec:dE}
\end{align}
The system \eqref{ds:PD-AVD} can be equivalently written as
\begin{align*}
\begin{pmatrix}
\ddot{x} \left( t \right) , \ddot{\lambda} \left( t \right)
\end{pmatrix} 
= & - \dfrac{\alpha}{t} \begin{pmatrix}
\dot{x} \left( t \right) , \dot{\lambda} \left( t \right)
\end{pmatrix} 
- \nabla \G_{\beta} \Bigl( \bigl( x \left( t \right) , \lambda \left( t \right) \bigr) \Big\vert \left( z , \mu \right) \Bigr) \nonumber \\
& - \begin{pmatrix}
A^* \left( \lambda \left( t \right) - \mu + \theta t \dot{\lambda} \left( t \right) \right) ,
- \Bigl( A \left( x \left( t \right) + \theta t \dot{x} \left( t \right) \right) - b \Bigr)
\end{pmatrix} ,
\end{align*}
which leads to
\begin{align*}
\dot{v}_{z , \mu} \left( t \right)
= & \left( 1 + \theta \right) \left( \dot{x} \left( t \right) , \dot{\lambda} \left( t \right) \right) + \theta t \left( \ddot{x} \left( t \right) , \ddot{\lambda} \left( t \right) \right) \nonumber \\
= & - \xi \left( \dot{x} \left( t \right) , \dot{\lambda} \left( t \right) \right) - \theta t \nabla \G_{\beta} \Bigl( \bigl( x \left( t \right) , \lambda \left( t \right) \bigr) \Big\vert \left( z , \mu \right) \Bigr) \nonumber \\
& - \theta t \begin{pmatrix}
A^* \left( \lambda \left( t \right) - \mu + \theta t \dot{\lambda} \left( t \right) \right) ,
- \Bigl( A \left( x \left( t \right) + \theta t \dot{x} \left( t \right) \right) - b \Bigr)
\end{pmatrix}.
\end{align*}
We get from the distributive property of inner product
\begin{align*}
& \left\langle v_{z , \mu} \left( t \right) , \dot{v}_{z , \mu} \left( t \right) \right\rangle \nonumber \\
= \ 	& - \xi \left\langle \bigl( x \left( t \right) , \lambda \left( t \right) \bigr) - \left( z , \mu \right) , \left( \dot{x} \left( t \right) , \dot{\lambda} \left( t \right) \right) \right\rangle 
- \xi \theta t \left\lVert \left( \dot{x} \left( t \right) , \dot{\lambda} \left( t \right) \right) \right\rVert ^{2} \nonumber \\
& - \theta t \left\langle \nabla \G_{\beta} \Bigl( \bigl( x \left( t \right) , \lambda \left( t \right) \bigr) \Big\vert \left( z , \mu \right) \Bigr) , \bigl( x \left( t \right) , \lambda \left( t \right) \bigr) - \left( z , \mu \right) \right\rangle \nonumber \\
& - \theta^{2} t^{2} \left\langle \nabla \G_{\beta} \Bigl( \bigl( x \left( t \right) , \lambda \left( t \right) \bigr) \Big\vert \left( z , \mu \right) \Bigr) , \left( \dot{x} \left( t \right) , \dot{\lambda} \left( t \right) \right) \right\rangle 
- \theta t \left\langle \lambda \left( t \right) - \mu + \theta t \dot{\lambda} \left( t \right) , Ax \left( t \right) - Az \right\rangle \nonumber \\
& - \theta^{2} t^{2} \left\langle \lambda \left( t \right) - \mu + \theta t \dot{\lambda} \left( t \right) , A \dot{x} \left( t \right) \right\rangle 
+ \theta t \left\langle A \left( x \left( t \right) + \theta t \dot{x} \left( t \right) \right) - b , \lambda \left( t \right) - \mu \right\rangle \nonumber \\
& + \theta^{2} t^{2} \left\langle A \left( x \left( t \right) + \theta t \dot{x} \left( t \right) \right) - b , \dot{\lambda} \left( t \right) \right\rangle .
\end{align*}
Since $z \in \Fea$, the last four terms in the above identity vanish. Indeed,
\begin{align*}
& - \left\langle \lambda \left( t \right) - \mu + \theta t \dot{\lambda} \left( t \right) , Ax \left( t \right) - Az \right\rangle 
- \theta t \left\langle \lambda \left( t \right) - \mu + \theta t \dot{\lambda} \left( t \right) , A \dot{x} \left( t \right) \right\rangle \nonumber \\
&  + \left\langle A \left( x \left( t \right) + \theta t \dot{x} \left( t \right) \right) - b , \lambda \left( t \right) - \mu \right\rangle 
+ \theta t \left\langle A \left( x \left( t \right) + \theta t \dot{x} \left( t \right) \right) - b , \dot{\lambda} \left( t \right) \right\rangle \nonumber \\
= \ 	& - \left\langle \lambda \left( t \right) - \mu + \theta t \dot{\lambda} \left( t \right) , Ax \left( t \right) - b \right\rangle 
- \theta t \left\langle \lambda \left( t \right) - \mu + \theta t \dot{\lambda} \left( t \right) , A \dot{x} \left( t \right) \right\rangle \nonumber \\
& + \!\left\langle Ax \left( t \right) - b , \lambda \left( t \right) - \mu \right\rangle 
+ \theta t \left\langle A \dot{x} \left( t \right) , \lambda \left( t \right) - \mu \right\rangle \nonumber + \theta t \left\langle Ax \left( t \right) - b , \dot{\lambda} \left( t \right) \right\rangle 
+ \theta^{2} t^{2} \left\langle A \dot{x} \left( t \right) , \dot{\lambda} \left( t \right) \right\rangle \\
=  \ & \ 0 .
\end{align*}
Therefore, \eqref{dec:dE} becomes
\begin{align}
\dfrac{d}{dt} \E_{x_{*} , \lambda_{*}} \left( t \right)
= & \ 2 \theta^{2} t \G_{\beta} \Bigl( \bigl( x \left( t \right) , \lambda \left( t \right) \bigr) \Big\vert \left( z , \mu \right) \Bigr) - \xi \theta t \left\lVert \left( \dot{x} \left( t \right) , \dot{\lambda} \left( t \right) \right) \right\rVert ^{2} \nonumber \\
& - \theta t \left\langle \nabla \G_{\beta} \left( \bigl( x \left( t \right) , \lambda \left( t \right) \bigr) \vert \left( z , \mu \right) \right) , \bigl( x \left( t \right) , \lambda \left( t \right) \bigr) - \left( z , \mu \right) \right\rangle . \label{dec:pre}
\end{align}
Furthermore, the convexity of $f$ and the fact that $z \in \Fea$ guarantee
\begin{align}
& - \left\langle \nabla \G_{\beta} \Bigl( \bigl( x \left( t \right) , \lambda \left( t \right) \bigr) \Big\vert \left( z , \mu \right) \Bigr) , \bigl( x \left( t \right) , \lambda \left( t \right) \bigr) - \left( z , \mu \right) \right\rangle \nonumber \\
= \ 	& \left\langle \nabla f \left( x \left( t \right) \right) , z - x \left( t \right) \right\rangle 
+ \left\langle A^* \mu , z - x \left( t \right) \right\rangle
+ \beta \left\langle A^* \left( Ax \left( t \right) - b \right) , z - x \left( t \right) \right\rangle \nonumber \\
\leq \ 	& - \left( f \left( x \left( t \right) \right) - f \left( z \right) \right) - \left\langle \mu , Ax \left( t \right) - b \right\rangle - \beta \left\lVert Ax \left( t \right) - b \right\rVert ^{2} \label{dec:conv} \\
= \ 	& - \G_{\beta} \Bigl( \bigl( x \left( t \right) , \lambda \left( t \right) \bigr) \Big\vert \left( z , \mu \right) \Bigr) - \dfrac{\beta}{2} \left\lVert Ax \left( t \right) - b \right\rVert ^{2} \nonumber.
\end{align}
Combining this inequality with \eqref{dec:pre} yields the desired statement.
\end{proof}
An important  consequence of Lemma \ref{lem:dec} is the following theorem.

\begin{thm}
	\label{thm:bnd}
	Let $\left( x , \lambda \right) \colon \left[ t_{0} , + \infty \right) \to \sX \times \sY$ be a solution of \eqref{ds:PD-AVD} and $\left( x_{*} , \lambda_{*} \right) \in \sol$.	The following statements are true:
	\begin{enumerate}
		\item 		
		it holds
%
		\begin{align}		
		\beta \int_{t_{0}}^{+ \infty} t \left\lVert Ax \left( t \right) - b \right\rVert ^{2} dt 
		& \leq {\dfrac{2 \E_{x_{*} , \lambda_{*}} \left( t_{0} \right)}{\theta}} < + \infty , \label{bnd:int:fea} \\
		\left( 1 - 2 \theta \right) \int_{t_{0}}^{+ \infty} t \Bigl( \Lag \left( x \left( t \right) , \lambda_{*} \right) - \Lag \left( x_{*} , \lambda \left( t \right) \right) \Bigr) dt 
		& \leq \dfrac{\E_{x_{*} , \lambda_{*}} \left( t_{0} \right)}{\theta} < + \infty , \label{bnd:int:Lag} \\
		\xi \int_{t_{0}}^{+ \infty} t \left\lVert \left( \dot{x} \left( t \right) , \dot{\lambda} \left( t \right) \right) \right\rVert ^{2} dt
		& \leq \dfrac{\E_{x_{*} , \lambda_{*}} \left( t_{0} \right)}{\theta} < + \infty \label{bnd:int:vel} ;
		\end{align}
		
		\item 
		if, in addition $\alpha > 3$ and $\frac{1}{2} \geq \theta > \frac{1}{\alpha - 1}$, then the trajectory $\bigl( x \left( t \right) , \lambda \left( t \right) \bigr)_{t \geq t_0}$ is bounded and the convergence rate of its velocity is
		\begin{equation*}
		\left\lVert \left( \dot{x} \left( t \right) , \dot{\lambda} \left( t \right) \right)  \right\rVert = \bO \left( \dfrac{1}{t} \right) \quad \mbox{as} \quad t \rightarrow +\infty.
		\end{equation*}
	\end{enumerate} 
\end{thm}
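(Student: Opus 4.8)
The plan is to obtain every assertion as a consequence of Lemma \ref{lem:dec} specialized to $(z,\mu) = (x_*,\lambda_*) \in \sol \subset \Fea \times \sY$, combined with the sign restrictions of Assumption \ref{assume:para}. For statement (i), I would first record that under Assumption \ref{assume:para} one has $2\theta-1 \leq 0$, $\beta \geq 0$ and $\xi \geq 0$, while $\G_{\beta}\bigl((x(t),\lambda(t))\vert(x_*,\lambda_*)\bigr) \geq 0$ by \eqref{ener:PD-gap}; hence \emph{all three} terms on the right-hand side of the estimate in Lemma \ref{lem:dec} are nonpositive. Integrating that estimate over $[t_0,T]$ and using $\E_{x_*,\lambda_*}(T) \geq 0$ from \eqref{ener:pos} gives
$$(1-2\theta)\theta\int_{t_0}^{T} t\,\G_{\beta}\,dt + \frac{\beta\theta}{2}\int_{t_0}^{T} t\left\lVert Ax(t)-b\right\rVert^2 dt + \xi\theta\int_{t_0}^{T} t\left\lVert(\dot{x}(t),\dot{\lambda}(t))\right\rVert^2 dt \leq \E_{x_*,\lambda_*}(t_0).$$
As each integrand is nonnegative, each integral is separately bounded by $\E_{x_*,\lambda_*}(t_0)/\theta$ (respectively $2\E_{x_*,\lambda_*}(t_0)/\theta$ for the feasibility term); letting $T\to+\infty$ and invoking monotone convergence delivers \eqref{bnd:int:fea} and \eqref{bnd:int:vel} along with their finiteness.

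To obtain \eqref{bnd:int:Lag} I would pass from $\G_{\beta}$ to the Lagrangian gap through \eqref{ener:PD-gap:Lag}, namely $\Lag(x(t),\lambda_*) - \Lag(x_*,\lambda(t)) = \G_{\beta} - \tfrac{\beta}{2}\lVert Ax(t)-b\rVert^2 \leq \G_{\beta}$. Since $1-2\theta \geq 0$, multiplying by $(1-2\theta)t$ and integrating preserves the inequality, so the bound already proven for $\int t\,\G_{\beta}\,dt$ transfers verbatim. The integrand is itself nonnegative because $\Lag(x(t),\lambda_*)-\Lag(x_*,\lambda(t)) = \Lag(x(t),\lambda_*)-f_* \geq 0$ by the saddle-point inequality, which again justifies the finiteness via monotone convergence.

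For statement (ii), the key observation is that the nonpositivity of the right-hand side in Lemma \ref{lem:dec} means $\E_{x_*,\lambda_*}$ is nonincreasing, hence $\E_{x_*,\lambda_*}(t) \leq \E_{x_*,\lambda_*}(t_0)$ for all $t \geq t_0$. Reading off the last summand in the definition \eqref{ener:E} gives $\tfrac{\xi}{2}\lVert(x(t),\lambda(t))-(x_*,\lambda_*)\rVert^2 \leq \E_{x_*,\lambda_*}(t_0)$; since the strict inequality $\theta > \tfrac{1}{\alpha-1}$ forces $\xi = \theta(\alpha-1)-1 > 0$, this bounds the trajectory. For the velocity, the middle summand yields $\lVert v_{x_*,\lambda_*}(t)\rVert \leq \sqrt{2\E_{x_*,\lambda_*}(t_0)}$, and writing $\theta t(\dot{x}(t),\dot{\lambda}(t)) = v_{x_*,\lambda_*}(t) - \bigl((x(t),\lambda(t))-(x_*,\lambda_*)\bigr)$ from \eqref{ener:v} and applying the triangle inequality with the two bounds just obtained gives $\theta t\lVert(\dot{x}(t),\dot{\lambda}(t))\rVert \leq C$ for a constant $C$ independent of $t$, i.e. the claimed $\bO(1/t)$ rate.

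The computations are light; the one point requiring genuine care is the sign-bookkeeping — confirming through Assumption \ref{assume:para} that all three terms in Lemma \ref{lem:dec} carry the right sign, which simultaneously powers the integral estimates of (i) and the monotonicity of $\E_{x_*,\lambda_*}$ used in (ii). The conceptual subtlety worth flagging is that the strict regime $\tfrac{1}{2}\geq\theta > \tfrac{1}{\alpha-1}$ (nonempty precisely because $\alpha > 3$) is exactly what upgrades $\xi \geq 0$ to $\xi > 0$, and it is this strict positivity of $\xi$ that converts the decay of $\E_{x_*,\lambda_*}$ into boundedness of the trajectory.
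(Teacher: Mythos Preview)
Your proposal is correct and follows essentially the same approach as the paper: both apply Lemma \ref{lem:dec} at $(z,\mu)=(x_*,\lambda_*)$, exploit the sign conditions from Assumption \ref{assume:para} to make the right-hand side nonpositive, integrate, and use $\E_{x_*,\lambda_*}\geq 0$ for (i), then read off boundedness and the velocity rate from the monotonicity of $\E_{x_*,\lambda_*}$ for (ii). The only cosmetic difference is that the paper replaces $\G_\beta$ by the Lagrangian gap (via \eqref{ener:PD-gap:Lag}) \emph{before} integrating, whereas you bound $\int t\,\G_\beta\,dt$ first and then pass to the Lagrangian gap; the two orderings are equivalent.
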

\begin{proof}
	\begin{enumerate}
		\item 
		Assumption \ref{assume:para} implies that $2 \theta - 1 \leq 0$ and $\xi \geq 0$ (see \eqref{ener:xi}).
		Moreover, $\left( x_{*} , \lambda_{*} \right) \in \sol$ yields $x_{*} \in \Fea$. 
		Therefore, we can apply Lemma \ref{lem:dec} to obtain for every $t \geq t_{0}$ 
		\begin{align}		
		\dfrac{d}{dt} \E_{x_{*} , \lambda_{*}} \left( t \right) 
		& \leq \left( 2 \theta - 1 \right) \theta t \Bigl( \Lb \left( x \left( t \right) , \lambda_{*} \right) - \Lb \left( x_{*} , \lambda \left( t \right) \right) \Bigr) \nonumber \\
		& \qquad - \dfrac{\beta \theta t}{2} \left\lVert Ax \left( t \right) - b \right\rVert ^{2} - \xi \theta t \left\lVert \left( \dot{x} \left( t \right) , \dot{\lambda} \left( t \right) \right) \right\rVert ^{2} \nonumber \\
		& \leq \left( 2 \theta - 1 \right) \theta t \Bigl( \Lag \left( x \left( t \right) , \lambda_{*} \right) - \Lag \left( x_{*} , \lambda \left( t \right) \right) \Bigr) \nonumber \\
		& \qquad - \dfrac{\beta \theta t}{2} \left\lVert Ax \left( t \right) - b \right\rVert ^{2} 
		- \xi \theta t \left\lVert \left( \dot{x} \left( t \right) , \dot{\lambda} \left( t \right) \right) \right\rVert ^{2} \nonumber\\
& \leq 0 . \label{bnd:dec}
		\end{align}
		This means that $\E_{x_{*} , \lambda_{*}}$ is nonincreasing on $\left[ t_{0} , + \infty \right)$, thus, for every $t \geq t_0$ it holds
		\begin{align}		
		& \theta^{2} t^{2} \Bigl( \Lb \left( x \left( t \right) , \lambda_{*} \right) - \Lb \left( x_{*} , \lambda \left( t \right) \right) \Bigr)  + \dfrac{1}{2} \left\lVert v_{x_{*} , \lambda_{*}} \left( t \right) \right\rVert ^{2} + \dfrac{\xi}{2} \left\lVert \bigl( x \left( t \right) , \lambda \left( t \right) \bigr) - \left( x_{*} , \lambda_{*} \right) \right\rVert ^{2} \nonumber \\
		\leq & \ \E_{x_{*} , \lambda_{*}}  \left( t_{0} \right). \label{bnd:fun}
		\end{align}
		
		For every $t \geq t_{0}$, by integrating \eqref{bnd:dec} from $t_{0}$ to $t$, we obtain
		\begin{align*}
		& \left( 1 - 2 \theta \right) \theta \int_{t_{0}}^{t} s \Bigl( \Lag \left( x \left( s \right) , \lambda_{*} \right) - \Lag \left( x_{*} , \lambda \left( s \right) \right) \Bigr) ds \nonumber \\
		& + \dfrac{\beta \theta}{2} \int_{t_{0}}^{t} s \left\lVert Ax \left( s \right) - b \right\rVert ^{2} ds
		+ \xi \theta \int_{t_{0}}^{t} s \left\lVert \left( \dot{x} \left( s \right) , \dot{\lambda} \left( s \right) \right) \right\rVert ^{2} ds \nonumber \\
		\leq & \ \E_{x_{*} , \lambda_{*}} \left( t_{0} \right) - \E_{x_{*} , \lambda_{*}} \left( t \right) \leq \E_{x_{*} , \lambda_{*}} \left( t_{0} \right) ,
		\end{align*}
where the last inequality follows from \eqref{ener:pos}. Since all quantities inside the integrals are nonnegative, we obtain \eqref{bnd:int:fea} - \eqref{bnd:int:vel} by passing $t \to + \infty$.
		
		\item 
		Assuming that $\alpha > 3$ and $\frac{1}{2} \geq \theta > \frac{1}{\alpha - 1}$, one can immediately see that $\xi > 0$. From \eqref{bnd:fun} we obtain for all $t \geq t_0$
		\begin{equation}
		\label{bnd:tra-sol}
		\left\lVert \bigl( x \left( t \right) , \lambda \left( t \right) \bigr) - \left( x_{*} , \lambda_{*} \right) \right\rVert ^{2}
		\leq \dfrac{2 \E_{x_{*} , \lambda_{*}} \left( t_{0} \right)}{\xi} \quad \forall t \geq t_0,
		\end{equation}
		which implies the boundedness of the trajectory. 
		On the other hand, the same inequality gives  for all $t \geq t_0$
		\begin{equation}
		\label{bnd:v}
		\left\lVert v_{x_{*} , \lambda_{*}} \left( t \right) \right\rVert = \left\lVert \bigl( x \left( t \right) , \lambda \left( t \right) \bigr) - \left( x_{*} , \lambda_{*} \right) + \theta t \left( \dot{x} \left( t \right) , \dot{\lambda} \left( t \right) \right) \right\rVert \leq \sqrt{2 \E_{x_{*} , \lambda_{*}} \left( t_{0} \right)} .
		\end{equation}
		Using the triangle inequality and \eqref{bnd:tra-sol} we obtain  for all $t \geq t_0$
		\begin{align}
		t \left\lVert \dot{x} \left( t \right) , \dot{\lambda} \left( t \right) \right\rVert
		& \leq \dfrac{1}{\theta} \left( \left\lVert \bigl( x \left( t \right) , \lambda \left( t \right) \bigr) - \left( x_{*} , \lambda_{*} \right) \right\rVert + \left\lVert v_{x_{*} , \lambda_{*}} \left( t \right) \right\rVert \right) \nonumber \\
		& \leq \dfrac{1}{\theta} \left( \sqrt{\dfrac{2 \E_{x_{*} , \lambda_{*}} \left( t_{0} \right)}{\xi}} + \sqrt{2 \E_{x_{*} , \lambda_{*}} \left( t_{0} \right)} \right) 
		= \dfrac{1}{\theta} \left( \dfrac{1}{\sqrt{\xi}} + 1 \right) \sqrt{2 \E_{x_{*} , \lambda_{*}} \left( t_{0} \right)} , \label{bnd:der}
		\end{align}
		which gives the desired convergence rate.
		\qedhere
	\end{enumerate}
\end{proof}

\subsection{Fast convergence rates for the primal-dual gap, the feasibility measure and the objective function value}

The following result quantifies the values of the energy function when defined with respect to a primal-dual element which slightly deviates from an element in $\sol$.

\begin{lem}
	\label{lem:rate}
	Let $\left( x , \lambda \right) \colon \left[ t_{0} , + \infty \right) \to \sX \times \sY$ be a solution of \eqref{ds:PD-AVD} and $\left( x_{*} , \lambda_{*} \right) \in \sol$. The following statements are true:
	\begin{enumerate}
		\item 
		the following quantity is finite
		\begin{equation}
		\label{rate:sup}
		\Csup 	:= \sup_{\mu \in \sB \left( \lambda_{*} ; 1 \right)} \E_{x_{*} , \mu} \left( t_{0} \right) < + \infty ;
		\end{equation}
		
		\item 
		for every $\mu \in \sB \left( \lambda_{*} ; 1 \right)$ and every $t \geq t_0$ it holds
		\begin{equation}
		\label{rate:est}
		\E_{x_{*} , \mu} \left( t \right) \leq 2 \E_{x_{*} , \lambda_{*}} \left( t_{0} \right) + \theta \left( \alpha - 1 \right) + \theta ^{2} t^{2} \left\langle \mu - \lambda_{*} , Ax \left( t \right) - b \right\rangle .
		\end{equation}
	\end{enumerate}
\end{lem}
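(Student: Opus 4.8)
The plan is to prove the pointwise estimate \eqref{rate:est} of statement (ii) first, by directly comparing the energy $\E_{x_{*},\mu}$ to the reference energy $\E_{x_{*},\lambda_{*}}$, and then to deduce the finiteness in (i) simply by evaluating this estimate at $t=t_{0}$. The key observation is that $\mu$ enters $\E_{x_{*},\mu}(t)$ only through the $\sY$-components in \eqref{ener:E}.

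First I would record that, since $x_{*}\in\Fea$, the definition of $\G_{\beta}$ gives $\G_{\beta}\bigl((x(t),\lambda(t))\vert(x_{*},\mu)\bigr)-\G_{\beta}\bigl((x(t),\lambda(t))\vert(x_{*},\lambda_{*})\bigr)=\langle\mu-\lambda_{*},Ax(t)-b\rangle$, while expanding the two squared norms in \eqref{ener:E} and abbreviating $p:=\lambda(t)-\lambda_{*}+\theta t\dot{\lambda}(t)$ and $q:=\lambda(t)-\lambda_{*}$ produces the exact identity
\begin{equation*}
\E_{x_{*},\mu}(t)=\E_{x_{*},\lambda_{*}}(t)+\theta^{2}t^{2}\langle\mu-\lambda_{*},Ax(t)-b\rangle-\langle p+\xi q,\mu-\lambda_{*}\rangle+\frac{1+\xi}{2}\left\lVert\mu-\lambda_{*}\right\rVert^{2}.
\end{equation*}
The crucial step is then to control the cross term $-\langle p+\xi q,\mu-\lambda_{*}\rangle$. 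I would apply Young's inequality in the split form $-\langle p,\mu-\lambda_{*}\rangle\leq\frac{1}{2}\|p\|^{2}+\frac{1}{2}\|\mu-\lambda_{*}\|^{2}$ and $-\xi\langle q,\mu-\lambda_{*}\rangle\leq\frac{\xi}{2}\|q\|^{2}+\frac{\xi}{2}\|\mu-\lambda_{*}\|^{2}$, and observe that $p$ is the $\sY$-component of $v_{x_{*},\lambda_{*}}(t)$ and $q$ the $\sY$-component of $(x(t),\lambda(t))-(x_{*},\lambda_{*})$, so that $\frac{1}{2}\|p\|^{2}+\frac{\xi}{2}\|q\|^{2}\leq\frac{1}{2}\|v_{x_{*},\lambda_{*}}(t)\|^{2}+\frac{\xi}{2}\|(x(t),\lambda(t))-(x_{*},\lambda_{*})\|^{2}\leq\E_{x_{*},\lambda_{*}}(t)$, the last inequality using that the $\G_{\beta}$-term in \eqref{ener:E} is nonnegative by \eqref{ener:PD-gap}. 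Substituting yields $\E_{x_{*},\mu}(t)\leq2\E_{x_{*},\lambda_{*}}(t)+(1+\xi)\|\mu-\lambda_{*}\|^{2}+\theta^{2}t^{2}\langle\mu-\lambda_{*},Ax(t)-b\rangle$.

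To finish (ii) I would invoke three facts: that $\E_{x_{*},\lambda_{*}}$ is nonincreasing (inequality \eqref{bnd:dec}), hence $\E_{x_{*},\lambda_{*}}(t)\leq\E_{x_{*},\lambda_{*}}(t_{0})$; that $\|\mu-\lambda_{*}\|\leq1$ for $\mu\in\sB(\lambda_{*};1)$; and the identity $1+\xi=\theta(\alpha-1)$ read off from \eqref{ener:xi}. Together these turn the previous display into \eqref{rate:est}. Statement (i) then follows at once by specializing \eqref{rate:est} to $t=t_{0}$, using $(x(t_{0}),\lambda(t_{0}))=(x_{0},\lambda_{0})$ and bounding $\langle\mu-\lambda_{*},Ax_{0}-b\rangle\leq\|Ax_{0}-b\|$ via Cauchy--Schwarz, which gives the uniform bound $\Csup\leq2\E_{x_{*},\lambda_{*}}(t_{0})+\theta(\alpha-1)+\theta^{2}t_{0}^{2}\|Ax_{0}-b\|<+\infty$.

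The main obstacle is precisely the cross term: a crude Cauchy--Schwarz bound $|\langle p+\xi q,\mu-\lambda_{*}\rangle|\leq\|p+\xi q\|$ does not visibly close, since $\|p+\xi q\|$ is not directly comparable to $\E_{x_{*},\lambda_{*}}(t_{0})$. It is the Young split, combined with the observation that the velocity- and distance-parts are dominated by $\E_{x_{*},\lambda_{*}}(t)$, that generates the clean coefficient $2$ in front of $\E_{x_{*},\lambda_{*}}(t_{0})$ and, through $1+\xi=\theta(\alpha-1)$, the additive constant $\theta(\alpha-1)$.
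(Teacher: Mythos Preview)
Your argument is correct and follows essentially the same route as the paper: both compare $\E_{x_{*},\mu}(t)$ to $\E_{x_{*},\lambda_{*}}(t)$ via the identity for the $\G_{\beta}$-term together with a Young/quadratic estimate on the $\sY$-components, then invoke the monotonicity \eqref{bnd:dec} and $1+\xi=\theta(\alpha-1)$. The only organizational difference is that the paper proves (i) first by a direct estimate of $\E_{x_{*},\mu}(t_{0})$ and then (ii), whereas you prove (ii) and read off (i) as the special case $t=t_{0}$; your ordering is slightly more economical but not a different idea.
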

\begin{proof}
	\begin{enumerate}		
		\item 		
		Let $\mu \in \sB \left( \lambda_{*} ; 1 \right)$. For every $t \geq t_{0}$ we have
		\begin{align}
		\E_{x_{*} , \mu} \left( t \right)
		= & \ \theta^{2} t^{2} \Bigl( \Lb \left( x \left( t \right) , \mu \right) - \Lb \left( x_{*} , \lambda \left( t \right) \right) \Bigr) + \dfrac{1}{2} \left\lVert v_{x_{*}, \mu} \left( t \right) \right\rVert ^{2} + \dfrac{\xi}{2} \left\lVert \bigl( x \left( t \right) , \lambda \left( t \right) \bigr) - \left( x_{*} , \mu \right) \right\rVert ^{2} \nonumber \\
		= & \ \theta^{2} t^{2} \Bigl( f \left( x \left( t \right) \right) - f \left( x_{*} \right) + \left\langle \mu , A x \left( t \right) - b \right\rangle + \dfrac{\beta}{2} \left\lVert A x \left( t \right) - b \right\rVert ^{2} \Bigr) \nonumber \\
		& + \dfrac{1}{2} \left\lVert \bigl( x \left( t \right) , \lambda \left( t \right) \bigr) - \left( x_{*} , \mu \right) + \theta t \left( \dot{x} \left( t \right) , \dot{\lambda} \left( t \right) \right) \right\rVert ^{2} + \dfrac{\xi}{2} \left\lVert \bigl( x \left( t \right) , \lambda \left( t \right) \bigr) - \left( x_{*} , \mu \right) \right\rVert ^{2}. \label{rate:Ener} 
		\end{align}
By the Cauchy-Schwarz inequality we get
		\begin{align}
		& f \left( x \left( t_{0} \right) \right) - f \left( x_{*} \right) + \left\langle \mu , A x \left( t_{0} \right) - b \right\rangle + \dfrac{\beta}{2} \left\lVert A x \left( t_{0} \right) - b \right\rVert ^{2} \nonumber \\
		\leq \ 	& f \left( x \left( t_{0} \right) \right) - f \left( x_{*} \right) + \left\lVert \mu \right\rVert \cdot \left\lVert A x \left( t_{0} \right) - b \right\rVert + \dfrac{\beta}{2} \left\lVert A x \left( t_{0} \right) - b \right\rVert ^{2} \nonumber \\
		\leq \ 	& \CLag := \left\lvert f \left( x \left( t_{0} \right) \right) - f \left( x_{*} \right) \right\rvert + \left( 1 + \left\lVert \lambda_{*} \right\rVert \right) \cdot \left\lVert A x \left( t_{0} \right) - b \right\rVert + \dfrac{\beta}{2} \left\lVert A x \left( t_{0} \right) - b \right\rVert ^{2} . \label{rate:sup:Lag}
		\end{align}
		We also have
		\begin{align}
		& \dfrac{1}{2} \left\lVert \left( x \left( t_{0} \right) , \lambda \left( t_{0} \right) \right) - \left( x_{*} , \mu \right) + \theta t_0 \left( \dot{x} \left( t_{0} \right) , \dot{\lambda} \left( t_{0} \right) \right) \right\rVert ^{2} \nonumber \\
		\leq \ 	& \left\lVert \left( x \left( t_{0} \right) , \lambda \left( t_{0} \right) \right) - \left( x_{*} , \lambda_{*} \right) + \theta t_0 \left( \dot{x} \left( t_{0} \right) , \dot{\lambda} \left( t_{0} \right) \right) \right\rVert ^{2} + \left\lVert \mu - \lambda_{*} \right\rVert ^{2} \nonumber \\
		\leq \ 	& \Cv := \left\lVert \left( x \left( t_{0} \right) , \lambda \left( t_{0} \right) \right) - \left( x_{*} , \lambda_{*} \right) + \theta t_0 \left( \dot{x} \left( t_{0} \right) , \dot{\lambda} \left( t_{0} \right) \right) \right\rVert ^{2} + 1\label{rate:sup:v}
		\end{align}
and
		\begin{align}
		\dfrac{1}{2} \left\lVert \left( x \left( t_{0} \right) , \lambda \left( t_{0} \right) \right) - \left( x_{*} , \mu \right) \right\rVert ^{2}
		& \leq \left\lVert \left( x \left( t_{0} \right) , \lambda \left( t_{0} \right) \right) - \left( x_{*} , \lambda_{*} \right) \right\rVert ^{2} + \left\lVert \mu - \lambda_{*} \right\rVert ^{2} \nonumber \\
		& \leq \Cite := \left\lVert \left( x \left( t_{0} \right) , \lambda \left( t_{0} \right) \right) - \left( x_{*} , \lambda_{*} \right) \right\rVert ^{2} + 1 . \label{rate:sup:ite}
		\end{align}
		Combining \eqref{rate:sup:Lag} - \eqref{rate:sup:ite}, it yields
		\begin{equation*}
		\Csup = \sup_{\mu \in \sB \left( \lambda_{*} ; 1 \right)} \E_{x_{*} , \mu} \left( t_{0} \right) 
		\leq \theta^{2} t_{0}^{2} \CLag + \Cv + \xi \Cite < + \infty ,
		\end{equation*}
		which proves \eqref{rate:sup}.
		
		\item 
		Let $t \geq t_{0}$.
		By recalling \eqref{ener:v} and \eqref{bnd:fun} we easily see that
		\begin{align}
		& \dfrac{1}{2} \left\lVert v_{x_{*} , \mu} \left( t \right) \right\rVert ^{2} + \dfrac{\xi}{2} \left\lVert \bigl( x \left( t \right) , \lambda \left( t \right) \bigr) - \left( x_{*} , \mu \right) \right\rVert ^{2} \nonumber \\
		\leq \ 	& \left\lVert v_{x_{*} , \lambda_{*}} \left( t \right) \right\rVert ^{2} + \xi \left\lVert \bigl( x \left( t \right) , \lambda \left( t \right) \bigr) - \left( x_{*} , \lambda_{*} \right) \right\rVert ^{2} + \left( 1 + \xi \right) \left\lVert \mu - \lambda_{*} \right\rVert ^{2} \nonumber \\
		\leq \ 	& \dfrac{1}{2} \left\lVert v_{x_{*} , \lambda_{*}} \left( t \right) \right\rVert ^{2} + \dfrac{\xi}{2} \left\lVert \bigl( x \left( t \right) , \lambda \left( t \right) \bigr) - \left( x_{*} , \lambda_{*} \right) \right\rVert ^{2} 
		+ \E_{x_{*} , \lambda_{*}} \left( t_{0} \right) + 1 + \xi . \label{rate:const}
		\end{align}
		Furthermore, by the definition of $\G_{\beta}$ and relation \eqref{ener:PD-gap:Lag} we have that
		\begin{align}
		\G_{\beta} \Bigl( \bigl( x \left( t \right) , \lambda \left( t \right) \bigr) \Big\vert \left( x_{*} , \mu \right) \Bigr)
		= & \ f \left( x \left( t \right) \right) - f(x_*) + \left\langle \lambda_{*} , Ax \left( t \right) - b \right\rangle + \dfrac{\beta}{2} \left\lVert Ax \left( t \right) - b \right\rVert ^{2} \nonumber \\
		& + \left\langle \mu - \lambda_{*} , Ax \left( t \right) - b \right\rangle \nonumber \\
		= & \ \G_{\beta} \Bigl( \bigl( x \left( t \right) , \lambda \left( t \right) \bigr) \Big\vert \left( x_{*} , \lambda_{*} \right) \Bigr)	
		+ \left\langle \mu - \lambda_{*} , Ax \left( t \right) - b \right\rangle \label{rate:gap-beta:eq} \\
		\geq & \left\langle \mu - \lambda_{*} , Ax \left( t \right) - b \right\rangle . \label{rate:gap-beta}
		\end{align}
Relations \eqref{rate:const} and \eqref{rate:gap-beta:eq} lead to
		\begin{align*}
		\E_{x_{*} , \mu} \left( t \right)
		= & \ \theta ^{2} t^{2} \G_{\beta} \Bigl( \bigl( x \left( t \right) , \lambda \left( t \right) \bigr) \Big\vert \left( x_{*} , \lambda_{*} \right) \Bigr) 
		+ \theta ^{2} t^{2} \left\langle \mu - \lambda_{*} , Ax \left( t \right) - b \right\rangle \nonumber \\
		& + \dfrac{1}{2} \left\lVert v_{x_{*} , \mu} \left( t \right) \right\rVert ^{2} + \dfrac{\xi}{2} \left\lVert \bigl( x \left( t \right) , \lambda \left( t \right) \bigr) - \left( x_{*} , \mu \right) \right\rVert ^{2} \nonumber \\
		 \leq & \ \E_{x_{*} , \lambda_{*}} \left( t \right) + \theta ^{2} t^{2} \left\langle \mu - \lambda_{*} , Ax \left( t \right) - b \right\rangle + \E_{x_{*} , \lambda_{*}} \left( t_{0} \right) + 1 + \xi \nonumber \\
		\leq & \ 2 \E_{x_{*} , \lambda_{*}} \left( t_{0} \right) + \theta \left( \alpha - 1 \right) + \theta ^{2} t^{2} \left\langle \mu - \lambda_{*} , Ax \left( t \right) - b \right\rangle ,
		\end{align*}
		where the last inequality is due to \eqref{ener:xi} and \eqref{bnd:fun}. This is nothing else than \eqref{rate:est}.
		\qedhere
	\end{enumerate}
\end{proof}

We can now formulate and prove the main convergence rate results of the paper
\begin{thm}
	\label{thm:rate}
	Let $\left( x , \lambda \right) \colon \left[ t_{0} , + \infty \right) \to \sX \times \sY$ be a solution of \eqref{ds:PD-AVD} and $\left( x_{*} , \lambda_{*} \right) \in \sol$. The following statements are true:
	\begin{enumerate}				
		\item
		for every $t \geq t_0$ it holds
		\begin{equation}
		\label{rate:gap}
		0 \leq \Lag \left( x \left( t \right) , \lambda_{*} \right) - \Lag \left( x_{*} , \lambda \left( t \right) \right) + \left\lVert A x \left( t \right) - b \right\rVert \leq \dfrac{\Cbnd}{\theta^{2} t^{2}} ,
		\end{equation}
		where
		\begin{equation}
		\label{rate:C4}
		\Cbnd := \Csup + 2 \E_{x_{*} , \lambda_{*}} \left( t_{0} \right) + \theta \left( \alpha - 1 \right) > 0 ;
		\end{equation}
		
		\item 
		for every $t \geq t_0$ it holds
		\begin{equation}
		\label{rate:fun}
		- \dfrac{\left\lVert \lambda_{*} \right\rVert \Cbnd}{\theta^{2} t^{2}} \leq f \left( x \left( t \right) \right) - f_{*} \leq \dfrac{\left( 1 + \left\lVert \lambda_{*} \right\rVert \right) \Cbnd}{\theta^{2} t^{2}} .
		\end{equation}
	\end{enumerate}
\end{thm}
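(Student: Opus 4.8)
The plan is to reduce the upper bounds to estimates on the augmented gap $\G_{\beta}(\,\cdot \mid (x_*,\mu))$ for a dual shift $\mu$ chosen adaptively along the trajectory. Throughout abbreviate $g(t) := \Lag(x(t),\lambda_*) - \Lag(x_*,\lambda(t))$ and $r(t) := \lVert Ax(t)-b\rVert$. By \eqref{ener:PD-gap} and the saddle-point inequality \eqref{intro:sadde-point} both are nonnegative, so the lower bound $0 \le g(t)+r(t)$ in \eqref{rate:gap} is immediate; and once the upper bound in (i) is available, part (ii) will follow at once from the identity $f(x(t)) - f_* = g(t) - \langle \lambda_*, Ax(t)-b\rangle$ together with $g(t) \ge 0$, the Cauchy--Schwarz estimate $\lvert\langle \lambda_*, Ax(t)-b\rangle\rvert \le \lVert\lambda_*\rVert\, r(t)$, and the two bounds $g(t), r(t) \le \Cbnd/(\theta^2 t^2)$ coming from (i).

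The primal--dual gap is the easy part. Reading \eqref{bnd:fun} of Theorem \ref{thm:bnd} and discarding the two nonnegative square terms gives $\theta^2 t^2 \G_{\beta}\bigl((x(t),\lambda(t)) \mid (x_*,\lambda_*)\bigr) \le \E_{x_*,\lambda_*}(t_0)$, and since $\G_{\beta}\bigl(\cdot \mid (x_*,\lambda_*)\bigr) = g(t) + \tfrac{\beta}{2} r(t)^2 \ge g(t)$ by \eqref{ener:PD-gap:Lag}, this already yields the $\bO(1/t^2)$ rate for $g(t)$.

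The heart of the matter is the feasibility, which has to be extracted from the slack created by taking $\mu \neq \lambda_*$. The device is the adaptive shift $\mu_t := \lambda_* + \frac{Ax(t)-b}{\lVert Ax(t)-b\rVert}$ when $Ax(t) \neq b$ (the case $Ax(t)=b$ being trivial): it lies on $\sB(\lambda_*;1)$ and satisfies $\langle \mu_t - \lambda_*, Ax(t)-b\rangle = r(t)$. By \eqref{rate:gap-beta:eq} we then have $\G_{\beta}\bigl(\cdot \mid (x_*,\mu_t)\bigr)(t) = g(t) + \tfrac{\beta}{2} r(t)^2 + r(t) \ge g(t)+r(t)$, and dropping the nonnegative square terms in the energy \eqref{ener:E} gives
\[
\theta^2 t^2 \bigl( g(t) + r(t) \bigr) \le \theta^2 t^2 \G_{\beta}\bigl(\cdot \mid (x_*,\mu_t)\bigr)(t) \le \E_{x_*,\mu_t}(t).
\]
Everything now hinges on bounding $\E_{x_*,\mu_t}(t)$ by $\Cbnd$.

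This last bound is the main obstacle, and it is here that the finiteness supplied by Lemma \ref{lem:rate}(i) becomes essential, for a direct appeal to \eqref{rate:est} is self-defeating: inserting $\mu = \mu_t$ there produces exactly the term $\theta^2 t^2 \langle \mu_t - \lambda_*, Ax(t)-b\rangle = \theta^2 t^2 r(t)$ on the right, which cancels the feasibility on the left and leaves only the gap estimate. The feasibility rate must therefore be obtained separately, as the bound $\theta^2 t^2 r(t) \le \Csup$. The clean mechanism is monotonicity of the shifted energy: since $2\theta - 1 \le 0$ under Assumption \ref{assume:para}, Lemma \ref{lem:dec} controls $\tfrac{d}{dt}\E_{x_*,\mu}$ by $(2\theta-1)\theta t\,\G_{\beta}(\cdot \mid (x_*,\mu))$ up to nonpositive terms; at $\theta = \tfrac12$ this contribution vanishes identically, so $\E_{x_*,\mu}$ is nonincreasing for every fixed $\mu$ and hence $\E_{x_*,\mu_t}(t) \le \E_{x_*,\mu_t}(t_0) \le \Csup$, giving $\theta^2 t^2 r(t) \le \Csup$. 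For $\theta < \tfrac12$ the gap contribution no longer vanishes and need not have a favourable sign --- $\G_{\beta}(\cdot \mid (x_*,\mu))$ is not sign-definite for $\mu \neq \lambda_*$ because $x_*$ is not a critical point of $\Lb(\cdot,\mu)$ --- so one must instead integrate the decrease inequality and absorb the surplus through a Gr\"onwall-type argument on $\int_{t_0}^t s\,r(s)\,ds$, again closed off by the uniform constant $\Csup$. Once $\theta^2 t^2 r(t) \le \Csup$ is established, \eqref{rate:est} upgrades to $\E_{x_*,\mu_t}(t) \le 2\E_{x_*,\lambda_*}(t_0) + \theta(\alpha-1) + \theta^2 t^2 r(t) \le \Cbnd$, which combined with the displayed inequality proves \eqref{rate:gap}; part (ii) then follows as described.
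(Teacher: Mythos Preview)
Your overall architecture is right: freeze a dual shift $\mu$ on the unit sphere around $\lambda_*$, exploit the energy $\E_{x_*,\mu}$, and specialize $\mu=\mu_t$ at the end to turn the extra inner product into the feasibility. The case $\theta=\tfrac12$ is handled correctly, since then Lemma~\ref{lem:dec} makes $\E_{x_*,\mu}$ nonincreasing and you get $\E_{x_*,\mu_t}(t)\le \Csup$ straight away.

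The gap is in the case $\theta<\tfrac12$. Your proposed ``Gr\"onwall-type argument on $\int_{t_0}^t s\,r(s)\,ds$'' does not close: integrating $\tfrac{d}{d\tau}\E_{x_*,\mu_t}(\tau)\le -\sigma\theta^2\tau\langle\mu_t-\lambda_*,Ax(\tau)-b\rangle$ from $t_0$ to $t$ and bounding the sign-indefinite inner product by $r(\tau)$ via Cauchy--Schwarz gives $\theta^2 t^2 r(t)\le \Csup+\sigma\theta^2\int_{t_0}^t\tau r(\tau)\,d\tau$, and Gr\"onwall on $R(t)=\int_{t_0}^t\tau r(\tau)\,d\tau$ yields only $\theta^2 t^2 r(t)\le \Csup\,(t/t_0)^\sigma$ with $\sigma=(1-2\theta)/\theta>0$. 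This grows without bound, so the claimed conclusion $\theta^2 t^2 r(t)\le \Csup$ does not follow; you have thrown away precisely the sign information needed. Your subsequent step of inserting $\theta^2 t^2 r(t)\le \Csup$ into \eqref{rate:est} is therefore unjustified.

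What the paper does instead is keep the inner product intact and exploit an exact cancellation. With $\mu$ \emph{fixed}, multiply the differential inequality by the integrating factor $t^\sigma$ and integrate by parts to obtain
\[
t^\sigma\E_{x_*,\mu}(t)\;\le\; t_0^\sigma\E_{x_*,\mu}(t_0)+\sigma\!\int_{t_0}^t\!\tau^{\sigma-1}\E_{x_*,\mu}(\tau)\,d\tau-\sigma\theta^2\!\int_{t_0}^t\!\tau^{\sigma+1}\langle\mu-\lambda_*,Ax(\tau)-b\rangle\,d\tau .
\]
Now bound $\E_{x_*,\mu}(\tau)$ inside the first integral by \eqref{rate:est}: the contribution $\sigma\theta^2\!\int\tau^{\sigma+1}\langle\mu-\lambda_*,Ax(\tau)-b\rangle\,d\tau$ cancels the second integral exactly, leaving $t^\sigma\E_{x_*,\mu}(t)\le t_0^\sigma\Csup+(2\E_{x_*,\lambda_*}(t_0)+\theta(\alpha-1))(t^\sigma-t_0^\sigma)\le t^\sigma\Cbnd$. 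Specializing $\mu=\mu(s)$ and $t=s$ then gives \eqref{rate:gap} directly, without ever establishing $\theta^2 t^2 r(t)\le\Csup$ as an intermediate step. The missing idea in your proposal is this integrating-factor-plus-cancellation device; once you add it, the rest of your outline (including part~(ii)) goes through.
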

\begin{proof}
	\begin{enumerate}					
		\item 			
		We fix $s \geq t_{0}$ and define
		\begin{equation}
		\label{rate:mu-s}
		\mu \left( s \right) := \begin{cases}
		\lambda_{*} + \dfrac{A x \left( s \right) - b}{\left\lVert A x \left( s \right) - b \right\rVert}, & \textrm{ if } A x \left( s \right) - b \neq 0 , \\
		\lambda_{*}, & \textrm{ if } A x \left( s \right) - b = 0 .
		\end{cases}
		\end{equation}
		It is clear that $\mu \left( s \right) \in \sB \left( \lambda_{*} ; 1 \right)$. For brevity, we set 
$$\sigma := \dfrac{1 - 2 \theta}{\theta} \geq 0.$$
Since $\left( x_{*} , \lambda_{*} \right) \in \sol$, we have $\left( x_{*} , \mu \left( s \right) \right) \in \Fea \times \sB \left( \lambda_{*} ; 1 \right)$. 		
		Lemma \ref{lem:dec} combined with the relation \eqref{rate:gap-beta} ensure that for every $t \geq t_{0}$ it holds
		\begin{align}
		\dfrac{d}{dt} \E_{x_{*} , \mu \left( s \right)} \left( t \right) 
		& \leq - \sigma \theta^{2} t \G_{\beta} \Bigl( \bigl( x \left( t \right) , \lambda \left( t \right) \bigr) \Big\vert \left( x_{*} , \mu \left( s \right) \right) \Bigr) \nonumber \\
& \leq - \sigma \theta^{2} t \left\langle \mu \left( s \right) - \lambda_{*} , Ax \left( t \right) - b \right\rangle . \label{rate:dec}
		\end{align}
We will prove that for every $t \geq t_{0}$ it holds
		\begin{align}
		\theta^{2} t^{2} \Bigl( f \left( x \left( t \right) \right) - f \left( x_{*} \right) + \left\langle \mu \left( s \right) , A x \left( t \right) - b \right\rangle \Bigr) 
		& \leq \E_{x_{*} , \mu \left( s \right)} \left( t \right) \nonumber \\
		& \leq \Cbnd = \Csup + 2 \E_{x_{*} , \lambda_{*}} \left( t_{0} \right) + \theta \left( \alpha - 1 \right) . \label{rate:t}
		\end{align}
		The first inequality follows from the definition of $\E_{x_{*} , \mu \left( s \right)}$. To show the later one, we multiply both sides  of \eqref{rate:dec} by $t^{\sigma} > 0$ and use integration by parts, for $\sigma >0$, or just integrate \eqref{rate:t}, for $\sigma=0$, to deduce that for every $t \geq t_0$
		\begin{align}
		t^{\sigma} \E_{x_{*} , \mu \left( s \right)} \left( t \right) - t_{0}^{\sigma} \E_{x_{*} , \mu \left( s \right)} \left( t_{0} \right) 
		& - \sigma \int_{t_{0}}^{t} \tau^{\sigma - 1} \E_{x_{*} , \mu \left( s \right)} \left( \tau \right) d \tau \nonumber \\
		& \leq - \sigma \theta^{2} \int_{t_{0}}^{t} \tau^{\sigma+1} \left\langle \mu \left( s \right) - \lambda_{*} , Ax \left( \tau \right) - b \right\rangle d \tau . \label{rate:yt}
		\end{align}
By using \eqref{rate:sup} and \eqref{rate:est} we further obtain for every $t \geq t_0$
		\begin{align*}
		t^{\sigma} \E_{x_{*} , \mu \left( s \right)} \left( t \right)
		 \leq & \ t_{0}^{\sigma} \E_{x_{*} , \mu \left( s \right)} \left( t_{0} \right)
		+ \sigma \int_{t_{0}}^{t} \tau^{\sigma - 1} \E_{x_{*} , \mu \left( s \right)} \left( \tau \right) d \tau \nonumber \\
		&  - \sigma \theta^{2} \int_{t_{0}}^{t} \tau^{\sigma+1} \left\langle \mu \left( s \right) - \lambda_{*} , Ax \left( \tau \right) - b \right\rangle d \tau \nonumber \\
		\leq & \ t_{0}^{\sigma} \Csup 
		+ \sigma \left( 2 \E_{x_{*} , \lambda_{*}} \left( t_{0} \right) + \theta \left( \alpha - 1 \right) \right) \int_{t_{0}}^{t} \tau^{\sigma - 1} d \tau \nonumber \\
		= & \ t_{0}^{\sigma} \Csup + \left( 2 \E_{x_{*} , \lambda_{*}} \left( t_{0} \right) + \theta \left( \alpha - 1 \right) \right) \bigl( t^{\sigma} - t_{0}^{\sigma} \bigr) \nonumber \\
		 \leq & \ t^{\sigma} \bigl( \Csup + 2 \E_{x_{*} , \lambda_{*}} \left( t_{0} \right) + \theta \left( \alpha - 1 \right) \bigr) ,
		\end{align*}
		which is equivalent to \eqref{rate:t}. 
		
Now, since \eqref{rate:t} is true for every $t \geq t_{0}$, it is fulfilled also for $t := s \geq t_{0}$, which means that
		\begin{equation*}
		\theta^{2} s^{2} \Bigl( f \left( x \left( s \right) \right) - f \left( x_{*} \right) + \left\langle \mu \left( s \right) , A x \left( s \right) - b \right\rangle \Bigr) \leq \Cbnd .
		\end{equation*}
By the definition of $\mu \left( s \right)$ in \eqref{rate:mu-s}, if $A x \left( s \right) - b \neq 0$, we have
		\begin{align*}
		& f \left( x \left( s \right) \right) - f \left( x_{*} \right) + \left\langle \mu \left( s \right) , A x \left( s \right) - b \right\rangle \nonumber \\
		= \ 	& f \left( x \left( s \right) \right) - f \left( x_{*} \right) + \left\langle \lambda_{*} , A x \left( s \right) - b \right\rangle + \left\lVert A x \left( s \right) - b \right\rVert \nonumber \\
		= \ 	& \Lag \left( x \left( s \right) , \lambda_{*} \right) - \Lag \left( x_{*} , \lambda \left( s \right) \right) + \left\lVert A x \left( s \right) - b \right\rVert,
		\end{align*}
while, if $A x \left( s \right) - b = 0$, we can also write
		\begin{align*}
		& f \left( x \left( s \right) \right) - f \left( x_{*} \right) + \left\langle \mu \left( s \right) , A x \left( s \right) - b \right\rangle \nonumber =  f \left( x \left( s \right) \right) - f \left( x_{*} \right) + \left\langle \lambda_{*} , A x \left( s \right) - b \right\rangle \nonumber \\
		= \ 	& \Lag \left( x \left( s \right) , \lambda_{*} \right) - \Lag \left( x_{*} , \lambda \left( s \right) \right) = \Lag \left( x \left( s \right) , \lambda_{*} \right) - \Lag \left( x_{*} , \lambda \left( s \right) \right) + \left\lVert A x \left( s \right) - b \right\rVert .
		\end{align*}
		For both scenarios, the estimate \eqref{rate:t} becomes
		\begin{equation*}
		\theta^{2} s^{2} \Bigl( \Lag \left( x \left( s \right) , \lambda_{*} \right) - \Lag \left( x_{*} , \lambda \left( s \right) \right) + \left\lVert A x \left( s \right) - b \right\rVert \Bigr) 
		\leq \Cbnd.
		\end{equation*}
Since $s \geq t_0$ has been arbitrarily chosen, this gives proves \eqref{rate:gap}.
		
		\item 
		Since $\Lag \left( x \left( t \right) , \lambda_{*} \right) - \Lag \left( x_{*} , \lambda \left( t \right) \right) \geq 0$, a direct consequent of \eqref{rate:gap} is that for every $t \geq t_0$
		\begin{equation}
		\label{rate:fea}
		\left\lVert A x \left( t \right) - b \right\rVert \leq \dfrac{\Cbnd}{\theta^{2} t^{2}} .
		\end{equation}		
		From \eqref{rate:gap} and the Cauchy-Schwarz inequality we can also deduce for every $t \geq t_0$ that
		\begin{align}
		f \left( x \left( t \right) \right) - f \left( x_{*} \right) 
		\leq \dfrac{\Cbnd}{\theta^{2} t^{2}} - \left\langle \lambda_{*} , Ax \left( t \right) - b \right\rangle
		& \leq \dfrac{\Cbnd}{\theta^{2} t^{2}} + \left\lVert \lambda_{*} \right\rVert \left\lVert Ax \left( t \right) - b \right\rVert \nonumber \\
		& \leq \dfrac{\left( 1 + \left\lVert \lambda_{*} \right\rVert \right) \Cbnd}{\theta^{2} t^{2}} . \label{rate:upper}
		\end{align}		
		On the other hand, the convexity of $f$ together with the fact that $\left( x_{*} , \lambda_{*} \right) \in \sol$ guarantee for every $t \geq t_0$
		\begin{align}
		f \left( x \left( t \right) \right) - f \left( x_{*} \right) 
		& \geq \left\langle \nabla f \left( x_{*} \right) , x \left( t \right) - x_{*} \right\rangle = - \left\langle A^* \lambda_{*} , x \left( t \right) - x_{*} \right\rangle \nonumber \\
		& = - \left\langle \lambda_{*} , Ax \left( t \right) - b \right\rangle \nonumber \\
		& \geq - \left\lVert \lambda_{*} \right\rVert \left\lVert Ax \left( t \right) - b \right\rVert
		\geq - \dfrac{\left\lVert \lambda_{*} \right\rVert \Cbnd}{\theta^{2} t^{2}}. \label{rate:lower}
		\end{align}
		By combining \eqref{rate:upper} and \eqref{rate:lower} we obtain the desired statement.
		\qedhere
	\end{enumerate}
\end{proof}

\begin{rmk}
A few remarks comparing our convergence rate results with the ones reported in  \cite{Attouch-Chbani-Fadili-Riahi,He-Hu-Fang,Zeng-Lei-Chen} are in order.
	\begin{enumerate}
		\item [$\bullet$] \emph{Primal-dual gap}: Relation \eqref{rate:gap} guarantees a convergence rate for the primal-dual gap of
		\begin{equation*}
		\Lag \left( x \left( t \right) , \lambda_{*} \right) - \Lag \left( x_{*} , \lambda(t) \right) = \bO \left( \dfrac{1}{t^{2}} \right)  \quad \mbox{as} \quad t \rightarrow +\infty,
		\end{equation*}
which can be equivalently written as
		\begin{equation*}
		\Lag \left( x \left( t \right) , \lambda_{*} \right) - \Lag \left( x_{*} , \lambda_{*} \right) = \bO \left( \dfrac{1}{t^{2}} \right) \quad \mbox{as} \quad t \rightarrow +\infty.
		\end{equation*}
The primal-dual gap convergence rate stated in this form has been reported in \cite{Attouch-Chbani-Fadili-Riahi,He-Hu-Fang,Zeng-Lei-Chen}.
\item [$\bullet$] \emph{Feasibility measure}: Relation {\eqref{rate:fea}} guarantees a convergence rate for the feasibility measure of
		\begin{equation*}
		\left\lVert A x \left( t \right) - b \right\rVert = \bO \left( \dfrac{1}{t^{2}} \right ) \quad \mbox{as} \quad t \rightarrow +\infty,
		\end{equation*}
In \cite{Attouch-Chbani-Fadili-Riahi,He-Hu-Fang,Zeng-Lei-Chen}, the feasibility measure $\left\lVert A x \left( t \right) - b \right\rVert$ is reported to have a convergence rate of $\bO \left( 1/t \right)$ as $t \rightarrow +\infty$.
		
\item [$\bullet$] \emph{Objective function value}: The upper bound we report for the objective function value in \eqref{rate:fun} matches the one from  \cite{Attouch-Chbani-Fadili-Riahi}, while our lower bound, which is of order $\frac{1}{t^2}$, outperforms the one reported in  \cite{Attouch-Chbani-Fadili-Riahi}, which is of order $\frac{1}{t}$. In \cite{He-Hu-Fang, Zeng-Lei-Chen} no convergence rates for the objective function value are provided.
	\end{enumerate}
\end{rmk}

\section{Weak convergence of the trajectory to a primal-dual optimal solution}
\label{sec:con}

The study of the convergence of the trajectory will be made in the following setting, which will be assumed to be fulfilled throughout the whole section.

\begin{mdframed}
	\begin{assume}
		\label{assume:strict}
		Suppose that $\nabla f$ is $\ell-$Lipschitz continuous and $\alpha, \beta$ and $\theta$ in \eqref{ds:PD-AVD} satisfy
		\begin{equation*}
		\alpha > 3 , \quad {\beta \geq 0}
		\quad \textrm{ and } \quad
		\dfrac{1}{2} > \theta > \dfrac{1}{\alpha - 1}.
		\end{equation*}
	\end{assume}
\end{mdframed}

For the beginning we will prove that in the setting of Assumption \ref{assume:strict} the dynamical system \eqref{ds:PD-AVD} has a unique global twice continuously differentiable solution.

\begin{thm} For every initial condition
	$$\Bigl( x \left( t_{0} \right) , \lambda \left( t_{0} \right) \Bigr) := \Bigl( x_{0} , \lambda_{0} \Bigr) \in \sX \times \sY 
	\quad \textrm{ and } \quad 
	\Bigl( \dot{x} \left( t_{0} \right) , \dot{\lambda} \left( t_{0} \right) \Bigr) := \Bigl(\dot x_{0} , \dot \lambda_{0}\Bigr) \in \sX \times \sY$$
the dynamical system \eqref{ds:PD-AVD} has a unique global twice continously differentiable solution $\left( x , \lambda \right) \colon \left[ t_{0} , + \infty \right) \to \sX \times \sY$.
\end{thm}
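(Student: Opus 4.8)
The plan is to recast \eqref{ds:PD-AVD} as a first-order system on the product Hilbert space $\sH := \sX \times \sY \times \sX \times \sY$ and then invoke the Cauchy--Lipschitz (Picard--Lindel\"of) theorem. Introducing the velocities $u := \dot{x}$ and $v := \dot{\lambda}$ and the state $Z := (x,\lambda,u,v)$, the formulation \eqref{ds:Cauchy} is equivalent to $\dot{Z}(t) = F(t,Z(t))$, where $F \colon [t_0,+\infty) \times \sH \to \sH$ sends $(t,(x,\lambda,u,v))$ to $(u, v, w_1, w_2)$ with $w_1 := -\frac{\alpha}{t} u - \nabla f(x) - A^*\lambda - \theta t A^* v - \beta A^*(Ax-b)$ and $w_2 := -\frac{\alpha}{t} v + A(x + \theta t u) - b$, subject to the initial datum $Z(t_0) = (x_0,\lambda_0,\dot{x}_0,\dot{\lambda}_0)$.

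First I would verify the hypotheses of Cauchy--Lipschitz. The map $t \mapsto F(t,Z)$ is continuous on $[t_0,+\infty)$ for each fixed $Z$, since $\frac{\alpha}{t}$ and $\theta t$ are continuous there (here $t_0 > 0$ is essential to avoid the singularity of $\frac{\alpha}{t}$ at the origin). For the dependence on the state, the only nonlinear ingredient is $\nabla f$, which is $\ell$-Lipschitz by Assumption \ref{assume:strict}; every remaining term is affine in $Z$ with coefficients that are continuous in $t$ and bounded on each compact subinterval of $[t_0,+\infty)$, using that $A$ and $A^*$ are bounded. Hence $F(t,\cdot)$ is Lipschitz continuous, with a Lipschitz constant bounded on compact $t$-intervals. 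The Cauchy--Lipschitz theorem then yields a unique maximal solution $Z \in C^1([t_0,\Tm);\sH)$ for some $\Tm \in (t_0,+\infty]$; unwinding the definition of $Z$, this means $(x,\lambda) \in C^1$ with $(\dot{x},\dot{\lambda}) \in C^1$, i.e.\ $(x,\lambda)$ is twice continuously differentiable, and $(\ddot{x},\ddot{\lambda})$ is continuous since it equals $(w_1,w_2)$ evaluated along the trajectory.

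It remains to establish globality, $\Tm = +\infty$, which I expect to be the crux. I would argue by contradiction, assuming $\Tm < +\infty$, and derive an a priori bound on the maximal interval. Because $F$ has at most linear growth in the state --- $\|F(t,Z)\| \le a(t) + c(t)\|Z\|$ with $a,c$ continuous and hence bounded on the compact interval $[t_0,\Tm]$ --- the estimate $\frac{d}{dt}\|Z(t)\| \le \|\dot{Z}(t)\| = \|F(t,Z(t))\| \le a(t) + c(t)\|Z(t)\|$ combined with Gr\"onwall's lemma shows that $\|Z(\cdot)\|$ stays bounded on $[t_0,\Tm)$. Consequently $\dot{Z} = F(\cdot,Z)$ is bounded as well, so $Z$ is uniformly continuous and admits a limit as $t \uparrow \Tm$; the solution could then be continued beyond $\Tm$, contradicting maximality. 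Therefore $\Tm = +\infty$ and the solution is global.

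The delicate point is precisely this global extension: one must ensure that the coefficients carrying the factors $\frac{\alpha}{t}$ and $\theta t$ do not degenerate as $t \uparrow \Tm$, which is guaranteed exactly because $t_0 > 0$ and $\Tm < +\infty$ confine $t$ to a compact set bounded away from $0$. Notice that the parameter restrictions on $\alpha$ and $\theta$ in Assumption \ref{assume:strict} play no role here; only the Lipschitz continuity of $\nabla f$ is used. One could alternatively appeal to the a priori bounds of Theorem \ref{thm:bnd} once the solution is known to be defined on all of $[t_0,+\infty)$, but for the existence statement itself the linear-growth/Gr\"onwall route is the cleanest.
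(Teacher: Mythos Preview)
Your argument is correct, and the reduction to a first-order system plus Cauchy--Lipschitz mirrors the paper exactly. The genuine difference is in the globality step. The paper does \emph{not} use the linear-growth/Gr\"onwall argument; instead it invokes the energy function $\E_{x_*,\lambda_*}$ from Section~\ref{sec:Lya}: Lemma~\ref{lem:dec} gives $\frac{d}{dt}\E_{x_*,\lambda_*}\le 0$ on $[t_0,\Tm)$, and then the bounds \eqref{bnd:tra-sol}--\eqref{bnd:der} (which require $\xi=\theta\alpha-\theta-1>0$, hence the strict inequalities of Assumption~\ref{assume:strict}, and the existence of a saddle point) show that both the trajectory and the velocity are uniformly bounded, ruling out blow-up. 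So your closing remark is slightly off: the a priori energy bounds are not an afterthought but precisely the mechanism the paper uses for existence. Your Gr\"onwall route is more elementary and, as you note, works without any restriction on $\alpha,\theta$ and without assuming $\sol\neq\emptyset$; the paper's route, on the other hand, reuses machinery already in place and gives time-uniform bounds (independent of $\Tm$) that feed directly into the subsequent convergence analysis.
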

\begin{proof}
We observe that $\left( x , \lambda \right) \colon \left[ t_{0} , + \infty \right) \to \sX \times \sY$ is a solution of \eqref{ds:PD-AVD} if and only if $\left( x , \lambda , y , \nu \right) \colon \left[ t_{0} , + \infty \right) \to \sX \times \sY \times \sX \times \sY$ is a solution of the first-order dynamical system
\begin{equation}
\begin{dcases}
\dot{x} \left( t \right) 	 = y \left( t \right) \\
\dot{\lambda} \left( t \right) = \nu \left( t \right) \\
\dot{y} \left( t \right)		= - \dfrac{\alpha}{t} y \left( t \right) - \nabla f \left( x \left( t \right) \right) - A^* \left( \lambda \left( t \right) + \theta t \nu \left( t \right) \right) - \beta A^* \Bigl( Ax \left( t \right) - b \Bigr) \\
\dot{\nu} \left( t \right)  	= - \dfrac{\alpha}{t} \nu \left( t \right) + \Bigl( A \bigl( x \left( t \right) + \theta t \dot{x} \left( t \right) \bigr) - b \Bigr) \\
\Bigl( x \left( t_{0} \right) , \lambda \left( t_{0} \right) , y \left( t_{0} \right) , \nu \left( t_{0} \right) \Bigr) 			
= \Bigl( x_{0} , \lambda_{0} , \dot{x}_{0} , \dot{\lambda}_{0} \Bigr)
\end{dcases} . \label{e-u:ds}
\end{equation}
For $F \colon \left[ t_{0} , + \infty \right) \times \sX \times \sY \times \sX \times \sY \to \sX \times \sY \times \sX \times \sY$ by
\begin{align*}
& F \left( t , z , \zeta , u , \rho \right) := \nonumber \\
& \begin{pmatrix}
u , 
\rho , 
- \dfrac{\alpha}{t} u - \nabla f \left( z \right) - A^* \left( \zeta + \theta t \rho \right) - \beta A^* \Bigl( Az - b \Bigr) ,
- \dfrac{\alpha}{t} \rho + \Bigl( A \bigl( z + \theta t u \bigr) - b \Bigr)
\end{pmatrix} ,
\end{align*}
\eqref{e-u:ds} can be equivalently written as
\begin{equation*}
\begin{dcases}
\begin{pmatrix}
\dot{x} \left( t \right) , \dot{\lambda} \left( t \right) , \dot{y} \left( t \right) , \dot{\nu} \left( t \right)
\end{pmatrix} 
= F \begin{pmatrix}
t , x \left( t \right) , \lambda \left( t \right) , y \left( t \right) , \nu \left( t \right)
\end{pmatrix} \\
\Bigl( x \left( t_{0} \right) , \lambda \left( t_{0} \right) , y \left( t_{0} \right) , \nu \left( t_{0} \right) \Bigr) 			
= \Bigl( x_{0} , \lambda_{0} , \dot{x}_{0} , \dot{\lambda}_{0} \Bigr)
\end{dcases}.
\end{equation*}

Next we will show that $F$ is Lipschitz continuous on bounded sets and chose to this end arbitrary $t_{0} \leq t_{1} < t_{2} < + \infty$ and $\delta > 0$. For
\begin{align*}
\left( t , z , \zeta , u , \rho \right), \left( \bt , \bz , \bzeta , \bu , \brho \right) \in & \left[t_{1} , t_{2} \right] \times \sB \left( 0 ; \delta \right) \times \sB \left( 0 ; \delta \right) \times \sB \left( 0 ; \delta \right) \times \sB \left( 0 ; \delta \right) \\
\subseteq & \left[ t_{0} , + \infty \right) \times \sX \times \sY \times \sX \times \sY,
\end{align*}
we have
\begin{align*}
& \left\lVert F \left( t , z , \zeta , u , \rho \right) - F \left( \bt , \bz , \bzeta , \bu , \brho \right) \right\rVert \nonumber \\
\leq & \left\lVert u - \bu \right\rVert
+ \left\lVert \rho - \brho \right\rVert + \nonumber \\
& \left\lVert \dfrac{\alpha}{t} u - \dfrac{\alpha}{\bt} \bu + \nabla f \left( z \right) - \nabla f \left( \bz \right) + A^* \left( \zeta - \bzeta + \theta \left( t \rho - \bt \brho \right) \right) + \beta A^* A \left( z - \bz \right) \right\rVert +  \nonumber \\
& \left\lVert \dfrac{\alpha}{t} \rho - \dfrac{\alpha}{\bt} \brho - \Bigl( A \bigl( z - \bz + \theta \left( t u - \bt \bu \right) \bigr) \Bigr) \right\rVert \nonumber \\
\leq & \left\lVert u - \bu \right\rVert
+ \left\lVert \rho - \brho \right\rVert 
+ \left(\beta \left\lVert A \right\rVert^2 + \|A\| + \ell \right)  \left\lVert z - \bz \right\rVert 
+ \left\lVert A \right\rVert \left\lVert \zeta - \bzeta \right\rVert +  \nonumber \\
& \alpha \left\lVert \dfrac{1}{t} u - \dfrac{1}{\bt} \bu \right\rVert
+ \theta \left\lVert A \right\rVert \left\lVert t \rho - \bt \brho \right\rVert 
+ \alpha \left\lVert \dfrac{1}{t} \rho - \dfrac{1}{\bt} \brho \right\rVert
+ \theta \left\lVert A \right\rVert \left\lVert t u - \bt \bu \right\rVert \nonumber \\
\leq & \left( 1 + \dfrac{\alpha}{t} + \theta t \left\lVert A \right\rVert \right) \left\lVert u - \bu \right\rVert
+ \left( 1 +  \dfrac{\alpha}{t} + \theta t \left\lVert A \right\rVert \right) \left\lVert \rho - \brho \right\rVert 
+ \left(\beta \left\lVert A \right\rVert^2 + \|A\| + \ell \right) \left\lVert z - \bz \right\rVert + \nonumber \\
& \left\lVert A \right\rVert \left\lVert \zeta - \bzeta \right\rVert
+ \alpha \left( \left\lVert \bu \right\rVert + \left\lVert \brho \right\rVert \right) \left\lvert \dfrac{1}{t} - \dfrac{1}{\bt} \right\rvert
+ \theta \left\lVert A \right\rVert \left( \left\lVert \bu \right\rVert + \left\lVert \brho \right\rVert \right) \left\lvert t - \bt \right\rvert \nonumber \\
\leq & \left( 1 + \dfrac{\alpha}{t_1} + \theta t_2 \left\lVert A \right\rVert  \right) \left( \left\lVert u - \bu \right\rVert  + \left\lVert \rho - \brho \right\rVert \right) + \left(\beta \left\lVert A \right\rVert^2 + \|A\| + \ell \right) \left\lVert z - \bz \right\rVert + \nonumber \\
& \left\lVert A \right\rVert \left\lVert \zeta - \bzeta \right\rVert
+ 2 \delta \left( \dfrac{\alpha}{t_{1}^{2}} + \theta \left\lVert A \right\rVert \right) \left\lvert t - \bt \right\rvert .
\end{align*}
Consequently, 
\begin{equation*}
\left\lVert F \left( t , z , \zeta , u , \rho \right) - F \left( \bt , \bz , \bzeta , \bu , \brho \right) \right\rVert \leq L_{F} \left\lVert \left( t , z , \zeta , u , \rho \right) - \left( \bt , \bz , \bzeta , \bu , \brho \right) \right\rVert ,
\end{equation*}
where
\begin{equation*}
L_{F} := \sqrt{2 \left( 1 + \dfrac{\alpha}{t_1} + \theta t_2 \left\lVert A \right\rVert  \right)^2 + \left(\beta \left\lVert A \right\rVert^2 + \|A\| + \ell \right)^2 + \left\lVert A \right\rVert ^{2} + 4 \delta ^{2} \left( \dfrac{\alpha}{t_{1}^{2}} + \theta \left\lVert A \right\rVert \right) ^{2}} .
\end{equation*}

Since $F$ is Lipschitz continuous on bounded sets and continuously differentiable, the local existence and uniqueness theorem (see, for instance, \cite[Theorems 46.2 and 46.3]{Sell-You}) allows us to conclude that there exists a unique solution $\left( x , \lambda , y , \nu \right) \in \sX \times \sY \times \sX \times \sY$ of \eqref{e-u:ds} defined on a maximally interval $\left[ t_{0} , \Tm \right)$ where $t_{0} < \Tm \leq + \infty$. Furthermore, either
\begin{equation*}
\Tm = + \infty 
\qquad \textrm{ or } \qquad 
\lim\limits_{t \to \Tm} \left\lVert \left( x \left( t \right) , \lambda \left( t \right) , y \left( t \right) , \nu \left( t \right) \right) \right\rVert = + \infty.
\end{equation*}
We will prove that $\Tm = + \infty$.

Let $\left( x_{*} , \lambda_{*} \right) \in \sol$. According to Lemma \ref{lem:dec} we have for every $t_{0} \leq t < \Tm$
\begin{align*}
\dfrac{d}{dt} \E_{x_{*} , \lambda_{*}} \left( t \right) 
& \leq \left( 2 \theta - 1 \right) \theta t \G_{\beta} \Bigl( \bigl( x \left( t \right) , \lambda \left( t \right) \bigr) \Big\vert \left( x_{*} , \lambda_{*} \right) \Bigr) - \dfrac{\beta \theta t}{2} \left\lVert Ax \left( t \right) - b \right\rVert ^{2}  - \xi \theta t \left\lVert \left( \dot{x} \left( t \right) , \dot{\lambda} \left( t \right) \right) \right\rVert ^{2} \\
& \leq 0 .
\end{align*}
From here it follows, as in Theorem \ref{thm:bnd} (see \eqref{bnd:tra-sol} and \eqref{bnd:der}), that for every $t_{0} \leq t < \Tm$ it holds
\begin{align*}
\left\lVert \left( x \left( t \right) , \lambda \left( t \right) \right) \right\rVert
& \leq \left\lVert \bigl( x \left( t \right) , \lambda \left( t \right) \bigr) - \left( x_{*} , \lambda_{*} \right) \right\rVert 
+ \left\lVert \left( x_{*} , \lambda_{*} \right) \right\rVert
\leq \sqrt{\dfrac{2 \E_{x_{*} , \lambda_{*}} \left( t_{0} \right)}{\xi}} + \left\lVert \left( x_{*} , \lambda_{*} \right) \right\rVert , \nonumber \\
\left\lVert (\dot{x} \left( t \right) , \dot{\lambda} \left( t \right) )\right\rVert
& = \left\lVert (y \left( t \right) , \nu \left( t \right) )\right\rVert
\leq \dfrac{1}{\theta t} \left( \dfrac{1}{\sqrt{\xi}} + 1 \right) \sqrt{2 \E_{x_{*} , \lambda_{*}} \left( t_{0} \right)}
\leq \dfrac{1}{\theta t_{0}} \left( \dfrac{1}{\sqrt{\xi}} + 1 \right) \sqrt{2 \E_{x_{*} , \lambda_{*}} \left( t_{0} \right)} .
\end{align*}
Consequently, $ t \mapsto \left( x \left( t \right) , \lambda \left( t \right) , y \left( t \right) , \nu \left( t \right) \right)$ is bounded on $[t_0, \Tm)$, which means that the limit $\lim\limits_{t \to \Tm} \left\lVert \left( x \left( t \right) , \lambda \left( t \right) , y \left( t \right) , \nu \left( t \right) \right) \right\rVert$ cannot be $+ \infty$. In conclusion, $\Tm = + \infty$, which completes the proof.
\end{proof}

We start the convergence analysis of the trajectory with the proof of two important integrability results, whereby we notice that that statement \eqref{bnd:int:fea} only implies \eqref{int:fea} if $\beta > 0$.
\begin{prop}
	\label{prop:int}
	Let $\left( x , \lambda \right) \colon \left[ t_{0} , + \infty \right) \to \sX \times \sY$ be a solution of \eqref{ds:PD-AVD} and $\left( x_{*} , \lambda_{*} \right) \in \sol$. 
	Then it holds
	\begin{equation}
	\label{int:grad}
	\int_{t_{0}}^{+ \infty} t \left\lVert \nabla f \left( x \left( t \right) \right) - \nabla f \left( x_{*} \right) \right\rVert ^{2} dt < + \infty
	\end{equation}
	and
	\begin{equation}
	\label{int:fea}
	\int_{t_{0}}^{+ \infty} t \left\lVert A x \left( t \right) - b \right\rVert ^{2} dt < + \infty .
	\end{equation}
\end{prop}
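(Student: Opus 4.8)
The plan is to establish the two integrals separately, treating the feasibility bound \eqref{int:fea} first, since it follows almost immediately from the already-proven quadratic decay of the feasibility measure, and then deducing the gradient bound \eqref{int:grad} from the Lipschitz descent inequality \eqref{pre:f-bound} combined with the integrability of the primal-dual gap in \eqref{bnd:int:Lag}.

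For \eqref{int:fea}, I would simply invoke the feasibility rate \eqref{rate:fea} from Theorem \ref{thm:rate}, namely $\left\lVert Ax(t) - b\right\rVert \le \frac{\Cbnd}{\theta^2 t^2}$. Squaring and multiplying by $t$ gives $t\left\lVert Ax(t) - b\right\rVert^2 \le \frac{\Cbnd^2}{\theta^4 t^3}$, and since $\int_{t_0}^{+\infty} t^{-3}\,dt = \frac{1}{2t_0^2} < +\infty$, the claim follows. The point worth stressing, and the reason \eqref{bnd:int:fea} does not suffice, is that this argument is insensitive to the value of $\beta$: it works verbatim when $\beta = 0$, where \eqref{bnd:int:fea} carries no information.

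For \eqref{int:grad}, the key is to bound the integrand by the primal-dual gap. Applying \eqref{pre:f-bound} with $x = x(t)$ and $y = x_*$ yields
\[
\frac{1}{2\ell}\left\lVert \nabla f(x(t)) - \nabla f(x_*)\right\rVert^2 \le f(x(t)) - f(x_*) - \left\langle \nabla f(x_*), x(t) - x_*\right\rangle.
\]
Using the optimality condition $\nabla f(x_*) = -A^*\lambda_*$ from \eqref{intro:opt-Lag} together with $Ax_* = b$, the last inner product rewrites as $-\left\langle\nabla f(x_*), x(t)-x_*\right\rangle = \left\langle \lambda_*, Ax(t)-b\right\rangle$, so the right-hand side equals $f(x(t)) - f_* + \left\langle \lambda_*, Ax(t)-b\right\rangle$, which by \eqref{ener:PD-gap} and \eqref{ener:PD-gap:Lag} is precisely $\Lag(x(t),\lambda_*) - \Lag(x_*,\lambda(t))$. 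Hence
\[
\frac{1}{2\ell}\left\lVert \nabla f(x(t)) - \nabla f(x_*)\right\rVert^2 \le \Lag(x(t),\lambda_*) - \Lag(x_*,\lambda(t)) \qquad \forall t \ge t_0.
\]

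Multiplying by $t$, integrating over $[t_0, +\infty)$, and invoking \eqref{bnd:int:Lag} then gives the result: since Assumption \ref{assume:strict} forces $\theta < \frac{1}{2}$, we have $1 - 2\theta > 0$, so \eqref{bnd:int:Lag} bounds $\int_{t_0}^{+\infty} t\bigl(\Lag(x(t),\lambda_*) - \Lag(x_*,\lambda(t))\bigr)\,dt$ by $\frac{\E_{x_*,\lambda_*}(t_0)}{\theta(1-2\theta)} < +\infty$. The only genuinely delicate point is the strictness $\theta < \frac{1}{2}$; this is exactly what distinguishes Assumption \ref{assume:strict} from Assumption \ref{assume:para} and what guarantees that the gap integrability \eqref{bnd:int:Lag} is not vacuous, so that I expect the whole argument to reduce to correctly assembling the already-available estimates rather than to any new computation.
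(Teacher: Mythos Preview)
Your argument is correct. For \eqref{int:fea} you do exactly what the paper does. For \eqref{int:grad}, however, you take a different and somewhat more economical route than the paper.

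The paper revisits the proof of Lemma~\ref{lem:dec}: it sharpens the convexity inequality \eqref{dec:conv} by replacing the bare gradient inequality with the descent estimate \eqref{pre:f-bound}, thereby obtaining the refined energy decay
\[
\dfrac{d}{dt}\E_{x_*,\lambda_*}(t) \le -\dfrac{\theta t}{2\ell}\left\lVert \nabla f(x(t)) - \nabla f(x_*)\right\rVert^2,
\]
and then integrates. You, by contrast, never reopen Lemma~\ref{lem:dec}: you bound the gradient term pointwise by the primal--dual gap via \eqref{pre:f-bound} and the optimality condition, and then invoke the already-established integrability \eqref{bnd:int:Lag}. Your approach is more modular, relying only on stated results rather than intermediate steps of earlier proofs. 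The price is a genuine dependence on the strict inequality $\theta < \tfrac12$ from Assumption~\ref{assume:strict}, since otherwise \eqref{bnd:int:Lag} is vacuous; the paper's argument for \eqref{int:grad} would in fact go through even with $\theta = \tfrac12$, though that case is excluded here anyway.
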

\begin{proof}
The determinant role in the proof is the fact that, for every $t \geq t_0$, as $\nabla f$ is $\ell-$ Lipschitz continuous, relation \eqref{dec:conv} in the proof of Lemma \ref{lem:dec} can be sharpened thanks to \eqref{pre:f-bound} to
\begin{align*}
	& \ - \left\langle \nabla \G_{\beta} \Bigl( \bigl( x \left( t \right) , \lambda \left( t \right) \bigr) \Big\vert \left( x_{*} , \lambda_{*}  \right) \Bigr) , \bigl( x \left( t \right) , \lambda \left( t \right) \bigr) - \left( x_{*} , \lambda_{*}  \right) \right\rangle \nonumber \\
	= & \ \left\langle \nabla f \left( x \left( t \right) \right) , x_{*} - x \left( t \right) \right\rangle 
	+ \left\langle A^* \lambda_{*} , x_{*} - x \left( t \right) \right\rangle
	+ \beta \left\langle A^* \left( Ax \left( t \right) - b \right) , x_{*} - x \left( t \right) \right\rangle \nonumber \\
	\leq 	& \ - \left( f \left( x \left( t \right) \right) - f \left( x_{*} \right) \right) - \dfrac{1}{2 \ell} \left\lVert \nabla f \left( x \left( t \right) \right) - \nabla f \left( x_{*} \right) \right\rVert ^{2} - \left\langle \lambda_{*} , Ax \left( t \right) - b \right\rangle - \beta \left\lVert Ax \left( t \right) - b \right\rVert ^{2} \nonumber \\
	= & \ - \G_{\beta} \bigl( \bigl( x \left( t \right) , \lambda \left( t \right) \bigr) \vert \left( x_{*} , \lambda_{*} \right) \bigr) - \dfrac{1}{2 \ell} \left\lVert \nabla f \left( x \left( t \right) \right) - \nabla f \left( x_{*} \right) \right\rVert ^{2} - \dfrac{\beta}{2} \left\lVert Ax \left( t \right) - b \right\rVert ^{2} .
	\end{align*}
	Consequently, by combining this inequality with \eqref{dec:pre}, it yields for every $t \geq t_0$
	\begin{align*}
	\dfrac{d}{dt} \E_{x_{*} , \lambda_{*}} \left( t \right) 
	\leq & \left( 2 \theta - 1 \right) \theta t \G_{\beta} \Bigl( \bigl( x \left( t \right) , \lambda \left( t \right) \bigr) \Big\vert \left( x_{*} , \lambda_{*} \right) \Bigr) 
	- \xi \theta t \left\lVert \left( \dot{x} \left( t \right) , \dot{\lambda} \left( t \right) \right) \right\rVert ^{2} \nonumber \\
	& - \dfrac{\theta t}{2 \ell} \left\lVert \nabla f \left( x \left( t \right) \right) - \nabla f \left( x_{*} \right) \right\rVert ^{2} - \dfrac{\theta \beta t}{2} \left\lVert Ax \left( t \right) - b \right\rVert ^{2} \nonumber \\
	 \leq & - \dfrac{\theta t}{2 \ell} \left\lVert \nabla f \left( x \left( t \right) \right) - \nabla f \left( x_{*} \right) \right\rVert ^{2} .
	\end{align*}
This leads  by integration to \eqref{int:grad}. 

On the other hand, it follows from \eqref{rate:fea} that
	\begin{equation*}
	\int_{t_{0}}^{+ \infty} t \left\lVert A x \left( t \right) - b \right\rVert ^{2} dt \leq \dfrac{\Cbnd^{2}}{\theta^{4}} \int_{t_{0}}^{+ \infty} \dfrac{1}{t^{3}} < + \infty ,
	\end{equation*}
	and the proof is complete.
\end{proof}

Now we
define, for a given primal-dual optimal solution $\left( x_{*} , \lambda_{*} \right) \in \sol$, the following two mappings on $[t_0,+\infty)$
\begin{align*}
W \left( t \right)	
& := \Lb \left( x \left( t \right) , \lambda_{*} \right) - \Lb \left( x_{*} , \lambda \left( t \right) \right) + \dfrac{1}{2} \left\lVert \left( \dot{x} \left( t \right) , \dot{\lambda} \left( t \right) \right) \right\rVert ^{2} \geq 0 \\
\varphi \left( t \right)
& := \dfrac{1}{2} \left\lVert \bigl( x \left( t \right) , \lambda \left( t \right) \bigr) - \left( x_{*} , \lambda_{*} \right) \right\rVert ^{2} \geq 0. 
\end{align*}

\begin{lem}
	\label{lem:est-1}
	Let $\left( x , \lambda \right) \colon \left[ t_{0} , + \infty \right) \to \sX \times \sY$ be a solution of \eqref{ds:PD-AVD} and $\left( x_{*} , \lambda_{*} \right) \in \sol$. The following inequality holds for every $t \geq t_{0}$:
	\begin{equation}
	\label{est:pre}
	\ddot{\varphi} \left( t \right) + \dfrac{\alpha}{t} \dot{\varphi} \left( t \right) + \theta t \dot{W} \left( t \right) 
	+ \dfrac{1}{2 \ell} \left\lVert \nabla f \left( x \left( t \right) \right) - \nabla f \left( x_{*} \right) \right\rVert ^{2} + \dfrac{\beta}{2} \left\lVert Ax \left( t \right) - b \right\rVert ^{2} \leq 0 . 
	\end{equation}
\end{lem}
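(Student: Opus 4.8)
The plan is to differentiate $\varphi$ and $W$ directly, substitute the dynamics \eqref{ds:PD-AVD} (in the expanded form \eqref{ds:Cauchy}), and watch the inertial and extrapolation cross-terms cancel, in the same spirit as the proof of Lemma \ref{lem:dec}. Writing $p(t) := \bigl(x(t),\lambda(t)\bigr)$ and $p_* := (x_*,\lambda_*)$ for brevity, the first step is the elementary identity
\begin{equation*}
\ddot\varphi(t) + \frac{\alpha}{t}\dot\varphi(t) = \left\lVert\left(\dot x(t), \dot\lambda(t)\right)\right\rVert^2 + \left\langle p(t) - p_*, \left(\ddot x(t), \ddot\lambda(t)\right) + \frac{\alpha}{t}\left(\dot x(t), \dot\lambda(t)\right)\right\rangle,
\end{equation*}
into which I substitute $\bigl(\ddot x + \tfrac{\alpha}{t}\dot x,\ \ddot\lambda + \tfrac{\alpha}{t}\dot\lambda\bigr) = \bigl(-\nabla_x\Lb(x, \lambda + \theta t\dot\lambda),\ \nabla_\lambda\Lb(x + \theta t\dot x, \lambda)\bigr)$ and expand the partial gradients via \eqref{intro:aug-Lag}, using $Ax_* = b$ throughout to simplify the inner products with $A^*$.

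For the second step I would compute $\theta t\dot W(t)$. The crucial simplification is that $\Lb(x_*, \lambda(t)) = f_*$ is constant along the trajectory (since $x_* \in \Fea$), so that term contributes nothing to $\dot W$; differentiating $\Lb(x(t), \lambda_*)$ produces $\langle\nabla_x\Lb(x(t), \lambda_*), \dot x(t)\rangle$, and differentiating $\tfrac{1}{2}\|(\dot x, \dot\lambda)\|^2$ produces $\langle(\dot x, \dot\lambda), (\ddot x, \ddot\lambda)\rangle$, into which the dynamics are inserted once more. After this substitution the $\nabla f(x)$ and the $\beta A^*(Ax - b)$ contributions arising from $\nabla_x\Lb(x, \lambda_*)$ cancel against their counterparts in $\nabla_x\Lb(x, \lambda + \theta t\dot\lambda)$, leaving only terms linear in $A\dot x$, $\dot\lambda$ and $Ax - b$.

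The third and central step is to add $\theta t\dot W(t)$ to $\ddot\varphi + \tfrac{\alpha}{t}\dot\varphi$ and verify that all remaining extrapolation cross-terms annihilate one another: the pairs $\pm\theta t\langle Ax - b, \dot\lambda\rangle$ and $\pm\theta t\langle\lambda - \lambda_*, A\dot x\rangle$ cancel, as do the $\pm\theta t\langle A\dot x, \dot\lambda\rangle$ terms produced inside $\dot W$. What survives is
\begin{equation*}
\ddot\varphi + \frac{\alpha}{t}\dot\varphi + \theta t\dot W = (1 - \theta\alpha)\left\lVert\left(\dot x, \dot\lambda\right)\right\rVert^2 - \left\langle\nabla f(x), x - x_*\right\rangle - \left\langle\lambda_*, Ax - b\right\rangle - \beta\left\lVert Ax - b\right\rVert^2.
\end{equation*}
This bookkeeping of the cross-terms is where I expect to have to be most careful, since a single sign slip destroys the cancellation and the whole identity.

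Finally, I would convert the right-hand side into the claimed form. The term $-\langle\nabla f(x), x - x_*\rangle = \langle\nabla f(x), x_* - x\rangle$ is bounded above by the Lipschitz-sharpened convexity inequality \eqref{pre:f-bound} (applied with $x_*$ in the first slot and $x$ in the second), which gives $\langle\nabla f(x), x_* - x\rangle \leq f(x_*) - f(x) - \tfrac{1}{2\ell}\|\nabla f(x) - \nabla f(x_*)\|^2$; this is exactly the mechanism, already exploited in Proposition \ref{prop:int}, that manufactures the gradient-difference term. Recombining $f(x_*) - f(x) - \langle\lambda_*, Ax - b\rangle - \beta\|Ax - b\|^2 = -\G_\beta\bigl(p \,\vert\, p_*\bigr) - \tfrac{\beta}{2}\|Ax - b\|^2$ and then invoking $\G_\beta \geq 0$ from \eqref{ener:PD-gap} together with $1 - \theta\alpha \leq 0$ (which follows from $\theta \geq \tfrac{1}{\alpha - 1}$ in Assumption \ref{assume:strict}) discards the three nonpositive terms and yields exactly \eqref{est:pre}.
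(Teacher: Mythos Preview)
Your proposal is correct and follows essentially the same route as the paper's proof: both compute $\ddot\varphi+\tfrac{\alpha}{t}\dot\varphi$ and $\dot W$ by inserting the dynamics \eqref{ds:Cauchy}, observe the cancellation of the extrapolation cross-terms $\pm\theta t\langle Ax-b,\dot\lambda\rangle$ and $\pm\theta t\langle\lambda-\lambda_*,A\dot x\rangle$ upon summing, and then apply the sharpened convexity inequality \eqref{pre:f-bound} together with $\G_\beta\ge 0$ and $1-\theta\alpha\le 0$ to conclude. The only cosmetic difference is the order in which the Lipschitz estimate is applied (the paper estimates $-\langle x-x_*,\nabla_x\Lb(x,\lambda_*)\rangle$ before adding $\theta t\dot W$, you add first and estimate afterwards), which has no bearing on the argument.
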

\begin{proof}
Let $t \geq t_{0}$ be fixed. The time derivative of $W$ reads
\begin{align}
\dot{W} \left( t \right)
= & \left\langle \nabla_{x} \Lb \left( x \left( t \right) , \lambda_{*} \right) , \dot{x} \left( t \right) \right\rangle 
+ \left\langle \ddot{x} \left( t \right) , \dot{x} \left( t \right) \right\rangle 
+ \left\langle \ddot{\lambda} \left( t \right) , \dot{\lambda} \left( t \right) \right\rangle \nonumber \\
= & \left\langle \nabla_{x} \Lb \left( x \left( t \right) , \lambda \left( t \right) + \theta t \dot{\lambda} \left( t \right) \right) , \dot{x} \left( t \right) \right\rangle 	
+ \left\langle \ddot{x} \left( t \right) , \dot{x} \left( t \right) \right\rangle - \left\langle \lambda \left( t \right) - \lambda_{*} + \theta t \dot{\lambda} \left( t \right) , A \dot{x} \left( t \right) \right\rangle  \nonumber \\
& - \left\langle \nabla_{\lambda} \Lb \left( x \left( t \right) + \theta t \dot{x} \left( t \right) , \lambda \left( t \right) \right) , \dot{\lambda} \left( t \right) \right\rangle
+ \left\langle \ddot{\lambda} \left( t \right) , \dot{\lambda} \left( t \right) \right\rangle + \left\langle Ax \left( t \right) - b + \theta t A \dot{x} \left( t \right) , \dot{\lambda} \left( t \right) \right\rangle \nonumber \\
= & - \dfrac{\alpha}{t} \left\lVert \dot{x} \left( t \right) \right\rVert ^{2} 
- \dfrac{\alpha}{t} \left\lVert \dot{\lambda} \left( t \right) \right\rVert ^{2}
- \left\langle \lambda \left( t \right) - \lambda_{*} , A \dot{x} \left( t \right) \right\rangle
+ \left\langle Ax \left( t \right) - b , \dot{\lambda} \left( t \right) \right\rangle .
\label{der:dW}
\end{align}

On the one hand, by the chain rule, we have
\begin{align*}
\dot{\varphi} \left( t \right)
& = \left\langle x \left( t \right) - x_{*} , \dot{x} \left( t \right) \right\rangle + \left\langle \lambda \left( t \right) - \lambda_{*} , \dot{\lambda} \left( t \right) \right\rangle , \\
\ddot{\varphi} \left( t \right)
& = \left\langle x \left( t \right) - x_{*} , \ddot{x} \left( t \right) \right\rangle + \left\lVert \dot{x} \left( t \right) \right\rVert ^{2} + \left\langle \lambda \left( t \right) - \lambda_{*} , \ddot{\lambda} \left( t \right) \right\rangle + \left\lVert \dot{\lambda} \left( t \right) \right\rVert ^{2} . 
\end{align*}
By combining these relations and using that $Ax_* =b$, we get
\begin{align}
\ddot{\varphi} \left( t \right) + \dfrac{\alpha}{t} \dot{\varphi} \left( t \right) 
= & \left\langle x \left( t \right) - x_{*} , \ddot{x} \left( t \right) + \dfrac{\alpha}{t} \dot{x} \left( t \right) \right\rangle + \left\langle \lambda \left( t \right) - \lambda_{*} , \ddot{\lambda} \left( t \right) + \dfrac{\alpha}{t} \dot{\lambda} \left( t \right) \right\rangle  + \left\lVert \dot{x} \left( t \right) \right\rVert ^{2} + \left\lVert \dot{\lambda} \left( t \right) \right\rVert ^{2} \nonumber \\
= & - \left\langle x \left( t \right) - x_{*} , \nabla_{x} \Lb \left( x \left( t \right) , \lambda \left( t \right) + \theta t \dot{\lambda} \left( t \right) \right) \right\rangle \nonumber \\
& + \left\langle \lambda \left( t \right) - \lambda_{*} , \nabla_{\lambda} \Lb \left( x \left( t \right) + \theta t \dot{x} \left( t \right) , \lambda \left( t \right) \right) \right\rangle + \left\lVert \dot{x} \left( t \right) \right\rVert ^{2} + \left\lVert \dot{\lambda} \left( t \right) \right\rVert ^{2} \nonumber \\ 
= & - \left\langle x \left( t \right) - x_{*} , \nabla_{x} \Lb \left( x \left( t \right) , \lambda_{*} \right) \right\rangle 
- \left\langle Ax \left( t \right) - b , \lambda \left( t \right) - \lambda_{*} + \theta t \dot{\lambda} \left( t \right) \right\rangle \nonumber \\
& + \left\langle \lambda \left( t \right) - \lambda_{*} , Ax \left( t \right) - b + \theta t A \dot{x} \left( t \right) \right\rangle 
+ \left\lVert \dot{x} \left( t \right) \right\rVert ^{2} + \left\lVert \dot{\lambda} \left( t \right) \right\rVert ^{2} . \label{der:est} 
\end{align}
By exploiting the Lipschitz continuity of $\nabla f$ (see \eqref{pre:f-bound}) and using again that $Ax_* =b$, we obtain the following estimate
\begin{align*}
& - \left\langle x \left( t \right) - x_{*} , \nabla_{x} \Lb \left( x \left( t \right) , \lambda_{*} \right) \right\rangle \nonumber \\
= \ 	& - \left\langle x \left( t \right) - x_{*} , \nabla f \left( x \left( t \right) \right) \right\rangle - \left\langle x \left( t \right) - x_{*} , A^* \lambda_{*} \right\rangle - \beta \left\lVert Ax \left( t \right) - b \right\rVert ^{2} \nonumber \\
\leq \ 	& - \left( f \left( x \left( t \right) \right) - f_{*} \right) - \dfrac{1}{2 \ell} \left\lVert \nabla f \left( x \left( t \right) \right) - \nabla f \left( x_{*} \right) \right\rVert ^{2} - \left\langle \lambda_{*} , Ax \left( t \right) - b \right\rangle - \beta \left\lVert Ax \left( t \right) - b \right\rVert ^{2} \nonumber \\
= \ 	& - \Bigl( \Lb \left( x \left( t \right) , \lambda_{*} \right) - \Lb \left( x_{*} , \lambda \left( t \right) \right) \Bigr) - \dfrac{1}{2 \ell} \left\lVert \nabla f \left( x \left( t \right) \right) - \nabla f \left( x_{*} \right) \right\rVert ^{2} - \dfrac{\beta}{2} \left\lVert Ax \left( t \right) - b \right\rVert ^{2},
\end{align*}
which, in combination with \eqref{der:est}, leads to
\begin{align}
\ddot{\varphi} \left( t \right) + \dfrac{\alpha}{t} \dot{\varphi} \left( t \right) 
= & - \left\langle x \left( t \right) - x_{*} , \nabla_{x} \Lb \left( x \left( t \right) , \lambda_{*} \right) \right\rangle 
- \theta t \left\langle Ax \left( t \right) - b , \dot{\lambda} \left( t \right) \right\rangle \nonumber \\
& + \theta t \left\langle \lambda \left( t \right) - \lambda_{*} , A \dot{x} \left( t \right) \right\rangle 
+ \left\lVert \dot{x} \left( t \right) \right\rVert ^{2} + \left\lVert \dot{\lambda} \left( t \right) \right\rVert ^{2} \nonumber \\
 \leq & - \Bigl( \Lb \left( x \left( t \right) , \lambda_{*} \right) - \Lb \left( x_{*} , \lambda \left( t \right) \right) \Bigr) 
- \theta t \left\langle Ax \left( t \right) - b , \dot{\lambda} \left( t \right) \right\rangle \nonumber \\
& + \theta t \left\langle \lambda \left( t \right) - \lambda_{*} , A \dot{x} \left( t \right) \right\rangle 
+ \left\lVert \dot{x} \left( t \right) \right\rVert ^{2} + \left\lVert \dot{\lambda} \left( t \right) \right\rVert ^{2} \nonumber \\
& - \dfrac{1}{2 \ell} \left\lVert \nabla f \left( x \left( t \right) \right) - \nabla f \left( x_{*} \right) \right\rVert ^{2} - \dfrac{\beta}{2} \left\lVert Ax \left( t \right) - b \right\rVert ^{2} . \label{est:tight}
\end{align}	
Multiplying \eqref{der:dW} by $\theta t > 0$ then summing the result to \eqref{est:tight} yields
\begin{align*}
& \ddot{\varphi} \left( t \right) + \dfrac{\alpha}{t} \dot{\varphi} \left( t \right) + \theta t \dot{W} \left( t \right) 
+ \dfrac{1}{2 \ell} \left\lVert \nabla f \left( x \left( t \right) \right) - \nabla f \left( x_{*} \right) \right\rVert ^{2} + \dfrac{\beta}{2} \left\lVert Ax \left( t \right) - b \right\rVert ^{2} \nonumber \\
\leq & - \Bigl( \Lb \left( x \left( t \right) , \lambda_{*} \right) - \Lb \left( x_{*} , \lambda \left( t \right) \right) \Bigr) 
+ \left(1- \theta \alpha \right) \left\lVert \left( \dot{x} \left( t \right) , \dot{\lambda} \left( t \right) \right) \right\rVert ^{2} \nonumber\\
 \leq & \ 0 , 
\end{align*}
since $\theta > \dfrac{1}{\alpha - 1} > \dfrac{1}{\alpha}$.
\end{proof}

The following result provides one of the two statements of the Opial Lemma (see Lemma \ref{lem:Opial}) which we will use to prove weak convergence of the trajectory.

\begin{lem}\label{lem:lim-phi}
Let $\left( x , \lambda \right) \colon \left[ t_{0} , + \infty \right) \to \sX \times \sY$ be a solution of \eqref{ds:PD-AVD} and $\left( x_{*} , \lambda_{*} \right) \in \sol$. Then the positive part $\left[ \dot{\varphi} \right] _{+}$ of $\dot \varphi$ belongs to $\sL^{1} \left([ t_{0} , + \infty) \right)$ and the limit $\lim\limits_{t \to + \infty} \varphi \left( t \right) \in \sR$ exists.
\end{lem}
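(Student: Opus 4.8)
The plan is to deduce both conclusions from a single \emph{first-order} differential inequality for the auxiliary energy
\[
\Psi(t) := \dot{\varphi}(t) + \theta t\, W(t),
\]
into which the troublesome second-order term of Lemma~\ref{lem:est-1} is folded. Once $\left[ \Psi \right]_{+}$ is under control, the bound $\dot{\varphi} = \Psi - \theta t W \le \Psi$ (recall $W \ge 0$) gives $\left[ \dot{\varphi} \right]_{+} \le \left[ \Psi \right]_{+}$ for free, and the convergence of $\varphi$ will then follow from a standard monotonicity argument. The crucial preliminary input is the \emph{weighted} integrability $\int_{t_0}^{+\infty} t\, W(t)\, dt < +\infty$.

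First I would establish this weighted integrability. By \eqref{ener:PD-gap:Lag} the gap part of $W$ splits as $\Lag(x(t),\lambda_*) - \Lag(x_*,\lambda(t)) + \tfrac{\beta}{2}\lVert Ax(t)-b\rVert^2$, so that
\[
W(t) = \Bigl( \Lag\left( x(t),\lambda_* \right) - \Lag\left( x_*,\lambda(t) \right) \Bigr) + \tfrac{\beta}{2}\lVert Ax(t)-b\rVert^2 + \tfrac12\lVert (\dot{x}(t),\dot{\lambda}(t))\rVert^2 .
\]
Under Assumption~\ref{assume:strict} we have $\theta < \tfrac12$ and $\xi > 0$, hence the constants $1-2\theta$ in \eqref{bnd:int:Lag} and $\xi$ in \eqref{bnd:int:vel} are strictly positive; combining these two bounds with \eqref{int:fea} yields $\int_{t_0}^{+\infty} t\, W(t)\, dt < +\infty$ term by term.

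Next I would differentiate $\Psi$ and invoke Lemma~\ref{lem:est-1}. Since $\dot{\Psi} = \ddot{\varphi} + \theta W + \theta t \dot{W}$, dropping the two nonnegative square terms in \eqref{est:pre} gives $\ddot{\varphi} + \theta t \dot{W} \le -\tfrac{\alpha}{t}\dot{\varphi}$, whence $\dot{\Psi} \le -\tfrac{\alpha}{t}\dot{\varphi} + \theta W$. Substituting $\dot{\varphi} = \Psi - \theta t W$ to eliminate $\dot{\varphi}$ then produces the clean linear inequality
\[
\dot{\Psi}(t) + \frac{\alpha}{t}\,\Psi(t) \ \le\ (1+\alpha)\,\theta\, W(t) \qquad \forall t \ge t_0 .
\]
Multiplying by the integrating factor $t^{\alpha}$ gives $\tfrac{d}{dt}\bigl( t^{\alpha}\Psi \bigr) \le (1+\alpha)\theta\, t^{\alpha} W$; integrating from $t_0$ to $t$ and dividing by $t^{\alpha}$ yields
\[
\left[ \Psi(t) \right]_{+} \le \frac{t_0^{\alpha}\lvert \Psi(t_0)\rvert}{t^{\alpha}} + \frac{(1+\alpha)\theta}{t^{\alpha}}\int_{t_0}^{t} s^{\alpha} W(s)\, ds .
\]
Integrating over $[t_0,+\infty)$ and applying Tonelli's theorem collapses the $s^{\alpha}$ weight, since $\int_{t_0}^{+\infty} t^{-\alpha}\!\int_{t_0}^{t} s^{\alpha} W(s)\,ds\,dt = \tfrac{1}{\alpha-1}\int_{t_0}^{+\infty} s\, W(s)\, ds < +\infty$ by the first step. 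Hence $\left[ \Psi \right]_{+} \in \sL^1$ and therefore $\left[ \dot{\varphi} \right]_{+} \in \sL^1([t_0,+\infty))$.

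Finally, since $\varphi \ge 0$ is bounded from below and $\left[ \dot{\varphi} \right]_{+} \in \sL^1$, the map $t \mapsto \varphi(t) - \int_{t_0}^{t}\left[ \dot{\varphi} \right]_{+}(s)\,ds$ has nonpositive derivative and is bounded from below, hence converges; as $\int_{t_0}^{t}\left[ \dot{\varphi} \right]_{+}(s)\,ds$ also converges, $\lim_{t\to+\infty}\varphi(t)$ exists in $\sR$. I expect the main obstacle to be the indefinite-sign term $\theta t \dot{W}$ in \eqref{est:pre}: a naive integrating-factor estimate applied directly to $\varphi$ only delivers $\left[ \dot{\varphi} \right]_{+} = \bO(1/t)$, which fails to be integrable. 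Absorbing $\theta t W$ into the first-order variable $\Psi$ is exactly the device that trades $\dot{W}$ for the nonnegative, $t$-weighted-integrable source $W$, and it is the bound $\int_{t_0}^{+\infty} t\, W(t)\, dt < +\infty$ (rather than the weaker $\int_{t_0}^{+\infty} W < +\infty$) that makes the Tonelli step close.
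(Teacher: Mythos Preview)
Your proof is correct and follows essentially the same route as the paper's. The paper multiplies \eqref{est:pre} by $t^{\alpha}$ and recognizes the resulting left-hand side as $\tfrac{d}{dt}\bigl(t^{\alpha}\dot{\varphi}\bigr)+\theta\tfrac{d}{dt}\bigl(t^{\alpha+1}W\bigr)$; since $t^{\alpha}\Psi = t^{\alpha}\dot{\varphi}+\theta t^{\alpha+1}W$, this is exactly your inequality $\tfrac{d}{dt}(t^{\alpha}\Psi)\le(1+\alpha)\theta\,t^{\alpha}W$, and from there the integration, the Fubini/Tonelli step (the paper's Lemma~\ref{lem:est-int}), and the final monotonicity argument for $\varphi$ are identical. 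Your introduction of $\Psi$ is a clean way to package the same computation, and the observation $[\dot{\varphi}]_{+}\le[\Psi]_{+}$ is precisely the paper's step of dropping the nonnegative term $\theta t^{\alpha+1}W$ after integrating.
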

\begin{proof}
Multiplying inequality \eqref{est:pre}  by $t$ and adding $\theta(\alpha+1)tW(t)$ to its both sides, we obtain for every $t \geq t_0$
\begin{align}
t \ddot{\varphi} \left( t \right) + \alpha \dot{\varphi} \left( t \right)
+ \theta \left( t^{2} \dot{W} \left( t \right) + \left( \alpha + 1 \right) t W \left( t \right) \right) 
\leq \theta \left( \alpha + 1 \right) t W \left( t \right). \label{lim:t} 
\end{align}
Multiplying further \eqref{lim:t} by $t^{\alpha - 1}$, it yields for every $t \geq t_0$
\begin{equation}
\label{lim:inq}
\dfrac{d}{dt} \left( t^{\alpha} \dot{\varphi} \left( t \right) \right) + \theta \dfrac{d}{dt} \left( t^{\alpha + 1} W \left( t \right) \right) \leq \theta(\alpha+1) t^{\alpha} W \left( t \right) .
\end{equation}
As $1-2 \theta >0$ and $\xi = \theta \alpha - \theta - 1 >0$, it follows from \eqref{bnd:int:Lag} and \eqref{bnd:int:vel} in Theorem \ref{thm:bnd} that $t \mapsto tW(t)$ belongs to $\sL^{1} \left([ t_{0} , + \infty) \right)$.

After integration we obtain from \eqref{lim:inq} that for every $t \geq t_0$
\begin{equation*}
t^{\alpha} \dot{\varphi} \left( t \right) - t_{0}^{\alpha} \dot{\varphi} \left( t_{0} \right) + \theta \left( t^{\alpha + 1} W \left( t \right) - t_{0}^{\alpha + 1} W \left( t_{0} \right) \right)
\leq \theta(\alpha +1) \int_{t_{0}}^{t} s^{\alpha} W \left( s \right) ds
\end{equation*}
which yields
\begin{equation*}
\dot{\varphi} \left( t \right) \leq \dfrac{1}{t^{\alpha}} \left( t_{0}^{\alpha} \left\lvert \dot{\varphi} \left( t_{0} \right) \right\rvert + \theta t_{0}^{\alpha + 1} W \left( t_{0} \right) \right)
+ \dfrac{\theta(\alpha +1)}{t^{\alpha}} \int_{t_{0}}^{t} s^{\alpha} W \left( s \right) ds .
\end{equation*}
We set
\begin{equation*}
\Cint := t_{0}^{\alpha} \left\lvert \dot{\varphi} \left( t_{0} \right) \right\rvert + \theta t_{0}^{\alpha + 1} W \left( t_{0} \right) \geq 0
\end{equation*}
and obtain further that for every $t \geq t_0$
\begin{equation*}
\left[ \dot{\varphi} \left( t \right) \right] _{+} 
\leq \dfrac{\Cint}{t^{\alpha}} + \dfrac{\theta(\alpha +1)}{t^{\alpha}} \int_{t_{0}}^{t} s^{\alpha} W \left( s \right) ds
\end{equation*}
and after integration
\begin{equation*}
\int_{t_{0}}^{+ \infty} \left[ \dot{\varphi} \left( t \right) \right] _{+} dt
\leq \Cint \int_{t_{0}}^{+ \infty} \dfrac{1}{t^{\alpha}} dt
+ \theta(\alpha +1) \int_{t_{0}}^{+ \infty} \dfrac{1}{t^{\alpha}} \left( \int_{t_{0}}^{t} s^{\alpha} W \left( s \right) ds \right) dt .
\end{equation*}
We have
\begin{equation*}
\int_{t_{0}}^{+ \infty} \dfrac{1}{t^{\alpha}} dt = \dfrac{1}{\left( \alpha - 1 \right) t_{0}^{\alpha - 1}}
\end{equation*}
and, by applying Lemma \ref{lem:est-int} with $h \left( t \right) := tW \left( t \right)$ and $r := + \infty$, 
\begin{align*}
\int_{t_{0}}^{+ \infty} \dfrac{1}{t^{\alpha}} \left( \int_{t_{0}}^{t} s^{\alpha} W \left( s \right) ds \right) dt
& = \dfrac{1}{\alpha - 1} \int_{t_{0}}^{+ \infty} tW \left( t \right) dt .
\end{align*}
Combining these relations we conclude that
\begin{equation*}
\int_{t_{0}}^{+ \infty} \left[ \dot{\varphi} \left( t \right) \right] _{+} dt
\leq \dfrac{\Cint}{\left( \alpha - 1 \right) t_{0}^{\alpha - 1}} + \dfrac{\theta(\alpha + 1)}{\alpha - 1} \int_{t_{0}}^{+ \infty} tW \left( t \right) dt < + \infty .
\end{equation*}
Finally, let $\psi \colon \left[ t_{0} , + \infty \right) \to \sR$ be the function defined by
\begin{equation*}
\psi \left( t \right) := \varphi \left( t \right) - \int_{t_{0}}^{t} \left[ \dot{\varphi} \left( s \right) \right] _{+} ds .
\end{equation*}
This function is nonincreasing and bounded from below, thus it has a finite limit as $t \rightarrow +\infty$. From here it yields that the limit
\begin{equation*}
\lim\limits_{t \to + \infty} \varphi \left( t \right)
= \lim\limits_{t \to + \infty} \psi \left( t \right) + \int_{t_{0}}^{+ \infty} \left[ \dot{\varphi} \left( s \right) \right] _{+} ds \in \sR
\end{equation*}
exists. 
\end{proof}

Next we will prove a number of results which will finally guarantee that the second assumption of the Opial Lemma is fulfilled, namely that every weak sequential cluster point of the trajectory $(x, \lambda)$ is an element of $\sol$.

\begin{lem}\label{lem:est-2}
Let $\left( x , \lambda \right) \colon \left[ t_{0} , + \infty \right) \to \sX \times \sY$ be a solution of \eqref{ds:PD-AVD} and $\left( x_{*} , \lambda_{*} \right) \in \sol$. The following inequality holds for every $t \geq t_{0}$:
	\begin{align*}	
	& \dfrac{\alpha}{t} \dfrac{d}{dt} \left\lVert \left( \dot{x} \left( t \right) , \dot{\lambda} \left( t \right) \right) \right\rVert ^{2}	
	+ \theta \dfrac{d}{dt} \left( t \left\lVert A^* \left( \lambda \left( t \right) - \lambda_{*} \right) \right\rVert ^{2} \right) + \left( 1 - \theta \right) \left\lVert A^* \left( \lambda \left( t \right) - \lambda_{*} \right) \right\rVert ^{2} \nonumber \\
	& + 2 \left\langle \ddot{x} \left( t \right) + \dfrac{\alpha}{t} \dot{x} \left( t \right) , A^* \left( \lambda \left( t \right) - \lambda_{*} \right) \right\rangle\\
	\leq  & \ {2 \left\lVert \nabla f \left( x \left( t \right) \right) - \nabla f \left( x_{*} \right) \right\rVert ^{2} + \left( 2 \beta^{2} \left\lVert A \right\rVert ^{2} + 1 \right) \left\lVert Ax \left( t \right) - b \right\rVert ^{2}} . 
	\end{align*}
\end{lem}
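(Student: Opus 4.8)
The plan is to eliminate every second-order derivative on the left-hand side by means of the dynamics \eqref{ds:Cauchy}, thereby reducing it to a quadratic expression in the velocities $(\dot x, \dot\lambda)$, in $A^*(\lambda - \lambda_*)$, in the gradient mismatch $\nabla f(x) - \nabla f(x_*)$ and in the feasibility residual $Ax - b$, and then to majorize this expression by the right-hand side through completions of squares. Throughout I would exploit the optimality relations coming from $(x_*, \lambda_*) \in \sol$, namely $\nabla f(x_*) + A^* \lambda_* = 0$ and $Ax_* = b$: the first lets me write $\nabla f(x(t)) + A^* \lambda(t) = \bigl(\nabla f(x(t)) - \nabla f(x_*)\bigr) + A^*(\lambda(t) - \lambda_*)$, and the second identifies $A^*(\lambda(t) - \lambda_*)$ as the natural quantity replacing $A^*\lambda(t)$.

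The decisive first step is to \emph{expand} the kinetic-energy term rather than leave it as a derivative. Differentiating $\|(\dot x, \dot\lambda)\|^2$ and substituting $\ddot x$ and $\ddot\lambda$ from \eqref{ds:Cauchy} gives
\begin{equation*}
\tfrac{1}{2}\tfrac{d}{dt}\|(\dot x, \dot\lambda)\|^2 = -\tfrac{\alpha}{t}\|(\dot x, \dot\lambda)\|^2 - \bigl\langle \dot x, \nabla f(x) + A^*\lambda \bigr\rangle - \beta\bigl\langle \dot x, A^*(Ax - b)\bigr\rangle + \bigl\langle \dot\lambda, Ax - b\bigr\rangle,
\end{equation*}
where the two extrapolation-induced terms $-\theta t\langle \dot x, A^*\dot\lambda\rangle$ and $+\theta t\langle \dot\lambda, A\dot x\rangle$ cancel by self-adjointness. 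After multiplying by $\tfrac{\alpha}{t}$, the crucial gain is the \emph{dissipative} term $-\tfrac{2\alpha^2}{t^2}\|(\dot x, \dot\lambda)\|^2$; this negative quadratic in the velocities is precisely what will absorb all the velocity cross terms.

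Next I would insert $\ddot x + \tfrac{\alpha}{t}\dot x = -\nabla_x \Lb(x, \lambda + \theta t \dot\lambda) = -\nabla f(x) - A^*(\lambda + \theta t\dot\lambda) - \beta A^*(Ax - b)$ into the coupling term, so that, using the optimality rewrite,
\begin{equation*}
2\bigl\langle \ddot x + \tfrac{\alpha}{t}\dot x, A^*(\lambda - \lambda_*)\bigr\rangle = -2\bigl\langle \nabla f(x) - \nabla f(x_*), A^*(\lambda - \lambda_*)\bigr\rangle - 2\|A^*(\lambda - \lambda_*)\|^2 - 2\theta t\bigl\langle A^*\dot\lambda, A^*(\lambda - \lambda_*)\bigr\rangle - 2\beta\bigl\langle A^*(Ax - b), A^*(\lambda - \lambda_*)\bigr\rangle.
\end{equation*}
Expanding $\theta\tfrac{d}{dt}\bigl(t\|A^*(\lambda - \lambda_*)\|^2\bigr) = \theta\|A^*(\lambda - \lambda_*)\|^2 + 2\theta t\langle A^*(\lambda - \lambda_*), A^*\dot\lambda\rangle$, the $\theta t$ terms cancel against the one just produced, and the three $\|A^*(\lambda - \lambda_*)\|^2$ contributions (with coefficients $\theta$, $1 - \theta$ and $-2$) combine into a second dissipative term $-\|A^*(\lambda - \lambda_*)\|^2$.

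Collecting everything, the whole left-hand side equals the two dissipative terms $-\tfrac{2\alpha^2}{t^2}\|(\dot x, \dot\lambda)\|^2$ and $-\|A^*(\lambda - \lambda_*)\|^2$ plus mixed terms linear in $\tfrac{\alpha}{t}\dot x$, in $\tfrac{\alpha}{t}\dot\lambda$ and in $A^*(\lambda - \lambda_*)$. I would then finish by successive completions of squares. Completing the square in the velocities against $-\tfrac{2\alpha^2}{t^2}\|(\dot x,\dot\lambda)\|^2$ eliminates $(\dot x,\dot\lambda)$ and produces $\tfrac12\|(\nabla f(x)-\nabla f(x_*)) + A^*(\lambda-\lambda_*) + \beta A^*(Ax-b)\|^2 + \tfrac12\|Ax-b\|^2$; after this the coefficient of $\|A^*(\lambda-\lambda_*)\|^2$ has become $-\tfrac12$, so completing the square in $A^*(\lambda-\lambda_*)$ eliminates it in turn. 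What remains is controlled by a single Young inequality applied to $\langle\nabla f(x)-\nabla f(x_*), A^*(Ax-b)\rangle$ together with $\|A^*(Ax-b)\| \le \|A\|\,\|Ax-b\|$, which produce exactly $2\|\nabla f(x)-\nabla f(x_*)\|^2 + (2\beta^2\|A\|^2+1)\|Ax-b\|^2$. The main obstacle is the bookkeeping of these completions: the dissipative budget $-\tfrac{2\alpha^2}{t^2}\|(\dot x,\dot\lambda)\|^2$ must simultaneously dominate the three velocity cross terms, while the $\|A^*(\lambda-\lambda_*)\|^2$ budget is shared between the gradient and feasibility cross terms, so the one real check is that the accumulated constants land inside the stated right-hand side — which they do, with $\tfrac12\|Ax-b\|^2$ to spare.
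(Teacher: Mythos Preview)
Your approach is correct and leads to the stated inequality (indeed with the constant $2\beta^{2}\lVert A\rVert^{2}+\tfrac12$ in front of $\lVert Ax-b\rVert^{2}$, which is slightly sharper than what is claimed). It is, however, organized quite differently from the paper's proof.

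The paper does not substitute the dynamics into each term of the left-hand side. Instead it starts from the observation that, after using $\nabla f(x_*)+A^*\lambda_*=0$, the two equations of \eqref{ds:Cauchy} can be written as
\[
\ddot{x}+\tfrac{\alpha}{t}\dot{x}+A^*\bigl(\lambda-\lambda_*+\theta t\dot{\lambda}\bigr)=-\bigl(\nabla f(x)-\nabla f(x_*)+\beta A^*(Ax-b)\bigr),\qquad
\ddot{\lambda}+\tfrac{\alpha}{t}\dot{\lambda}-\theta tA\dot{x}=Ax-b,
\]
then \emph{squares both identities and adds them}. The right-hand side of this sum is immediately bounded by $2\lVert\nabla f(x)-\nabla f(x_*)\rVert^{2}+(2\beta^{2}\lVert A\rVert^{2}+1)\lVert Ax-b\rVert^{2}$ via a single Young inequality. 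The left-hand side is expanded, and three elementary completions of squares are used as \emph{lower} bounds: one to replace $\lVert(\ddot{x},\ddot{\lambda})+\tfrac{\alpha}{t}(\dot{x},\dot{\lambda})\rVert^{2}-\lVert(\ddot{x},\ddot{\lambda})\rVert^{2}$ by $\tfrac{\alpha}{t}\tfrac{d}{dt}\lVert(\dot{x},\dot{\lambda})\rVert^{2}$, one to discard the nonnegative norm $\lVert(\ddot{x},\ddot{\lambda})+\theta t(A^*\dot{\lambda},-A\dot{x})\rVert^{2}$, and one algebraic identity to recognize $\theta\tfrac{d}{dt}\bigl(t\lVert A^*(\lambda-\lambda_*)\rVert^{2}\bigr)+(1-\theta)\lVert A^*(\lambda-\lambda_*)\rVert^{2}$. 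The coupling term $2\langle\ddot{x}+\tfrac{\alpha}{t}\dot{x},A^*(\lambda-\lambda_*)\rangle$ appears directly in the expansion without any substitution.

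The trade-off: your route is more computational but makes the dissipation mechanism explicit --- the negative $\tfrac{2\alpha^{2}}{t^{2}}\lVert(\dot{x},\dot{\lambda})\rVert^{2}$ and $\lVert A^*(\lambda-\lambda_*)\rVert^{2}$ budgets are visible and you spend them against the cross terms. The paper's route is shorter and more structural: the squared-dynamics identity instantly ties the gradient/feasibility quantities on the right to the second-order quantities on the left, and only coarse lower bounds are needed (at the price of the looser constant $+1$ instead of $+\tfrac12$). Both are valid; the paper's argument avoids the careful bookkeeping of your successive completions of squares.
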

\begin{proof}
	Let $t \geq t_{0}$ be fixed. We have
	\begin{align}
	& \ \left\lVert \nabla f \left( x \left( t \right) \right) - \nabla f \left( x_{*} \right) + \beta A^* \left( Ax \left( t \right) - b \right) \right\rVert ^{2} \nonumber =  \left\lVert \ddot{x} \left( t \right) + \dfrac{\alpha}{t} \dot{x} \left( t \right) + A^* \left( \lambda \left( t \right) - \lambda_{*} + \theta t \dot{\lambda} \left( t \right) \right) \right\rVert ^{2} \nonumber \\
	= & \  \left\lVert \ddot{x} \left( t \right) + \dfrac{\alpha}{t} \dot{x} \left( t \right) \right\rVert ^{2} 
	+ \left\lVert A^* \left( \lambda \left( t \right) - \lambda_{*} + \theta t \dot{\lambda} \left( t \right) \right) \right\rVert ^{2} 
	+ 2 \left\langle \ddot{x} \left( t \right) + \dfrac{\alpha}{t} \dot{x} \left( t \right) , A^* \left( \lambda \left( t \right) - \lambda_{*} \right) \right\rangle \nonumber \\
	& + 2 \theta t \left\langle \ddot{x} \left( t \right) , A^* \dot{\lambda} \left( t \right) \right\rangle 
	+ 2 \alpha \theta \left\langle \dot{x} \left( t \right) , A^* \dot{\lambda} \left( t \right) \right\rangle \label{est:ds:x}
	\end{align}
	and
	\begin{align}
	& \left\lVert Ax \left( t \right) - b \right\rVert ^{2} = \left\lVert \ddot{\lambda} \left( t \right) + \dfrac{\alpha}{t} \dot{\lambda} \left( t \right) - \theta t A \dot{x} \left( t \right) \right\rVert ^{2} \nonumber \\
	 = & \left\lVert \ddot{\lambda} \left( t \right) + \dfrac{\alpha}{t} \dot{\lambda} \left( t \right) \right\rVert ^{2} 
	+ \theta^{2} t^{2} \left\lVert A \dot{x} \left( t \right) \right\rVert ^{2}
	- 2 \theta t \left\langle \ddot{\lambda} \left( t \right) , A \dot{x} \left( t \right) \right\rangle
	- 2 \alpha \theta \left\langle \dot{\lambda} \left( t \right) , A \dot{x} \left( t \right) \right\rangle \label{est:ds:lambda} .
	\end{align}	
	Summing \eqref{est:ds:x} and \eqref{est:ds:lambda}, we get
	\begin{align}
	& \ \left\lVert \nabla f \left( x \left( t \right) \right) - \nabla f \left( x_{*} \right) + \beta A^* \left( Ax \left( t \right) - b \right) \right\rVert ^{2} + \left\lVert Ax \left( t \right) - b \right\rVert ^{2} \nonumber \\
	= & \  \left\lVert \left (\ddot x(t), \ddot \lambda (t) \right) + \dfrac{\alpha}{t} \left (\dot x(t), \dot \lambda (t) \right)  \right\rVert ^{2} 
	+ \left\lVert A^* \left( \lambda \left( t \right) - \lambda_{*} + \theta t \dot{\lambda} \left( t \right) \right) \right\rVert ^{2} 
	+ \theta^{2} t^{2} \left\lVert A \dot{x} \left( t \right) \right\rVert ^{2} \nonumber \\
	& + 2 \theta t \left\langle \ddot{x} \left( t \right) , A^* \dot{\lambda} \left( t \right) \right\rangle - 2 \theta t \left\langle \ddot{\lambda} \left( t \right) , A \dot{x} \left( t \right) \right\rangle
	+ 2 \left\langle \ddot{x} \left( t \right) + \dfrac{\alpha}{t} \dot{x} \left( t \right) , A^* \left( \lambda \left( t \right) - \lambda_{*} \right) \right\rangle	. \label{est:ds:sum}
	\end{align}
We have
	\begin{align}
	& \ \theta^{2} t^{2} \left\lVert A \dot{x} \left( t \right) \right\rVert ^{2}
	+ 2 \theta t \left\langle \ddot{x} \left( t \right) , A^* \dot{\lambda} \left( t \right) \right\rangle 
	- 2 \theta t \left\langle \ddot{\lambda} \left( t \right) , A \dot{x} \left( t \right) \right\rangle \nonumber \\
	= & \ \theta^{2} t^{2} \left\lVert  \left( A^* \dot{\lambda} \left( t \right) , - A \dot{x} \left( t \right) \right) \right\rVert ^{2} - \theta^{2} t^{2} \left\lVert A^* \dot{\lambda} \left( t \right) \right\rVert ^{2} + 2 \theta t \left\langle \left (\ddot x(t), \ddot \lambda (t) \right),  \left( A^* \dot{\lambda} \left( t \right) , - A \dot{x} \left( t \right) \right)\right\rangle \nonumber \\
	= & \ - \left\lVert  \left (\ddot x(t), \ddot \lambda (t) \right) \right\rVert ^{2} + \left\lVert  \left (\ddot x(t), \ddot \lambda (t) \right) + \theta t \left( A^* \dot{\lambda} \left( t \right) , - A \dot{x} \left( t \right) \right)\right\rVert ^{2} - \theta^{2} t^{2} \left\lVert A^* \dot{\lambda} \left( t \right) \right\rVert ^{2} \nonumber \\
	\geq & \ - \left\lVert  \left (\ddot x(t), \ddot \lambda (t) \right) \right\rVert ^{2} - \theta^{2} t^{2} \left\lVert A^* \dot{\lambda} \left( t \right) \right\rVert ^{2} \label{est:ds:B}
	\end{align}
and
\begin{align}
	& \ \left\lVert \left (\ddot x(t), \ddot \lambda (t) \right) + \dfrac{\alpha}{t} \left( \dot{x} \left( t \right) , \dot{\lambda} \left( t \right) \right) \right\rVert ^{2} - \left\lVert  \left (\ddot x(t), \ddot \lambda (t) \right)  \right\rVert ^{2} \nonumber\\
	= & \ \dfrac{\alpha^{2}}{t^{2}} \left\lVert \left( \dot{x} \left( t \right) , \dot{\lambda} \left( t \right) \right) \right\rVert ^{2} + 2 \dfrac{\alpha}{t} \left\langle  \left (\ddot x(t), \ddot \lambda (t) \right), \left( \dot{x} \left( t \right) , \dot{\lambda} \left( t \right) \right) \right\rangle \geq \dfrac{\alpha}{t} \dfrac{d}{dt} \left\lVert \left( \dot{x} \left( t \right) , \dot{\lambda} \left( t \right) \right) \right\rVert ^{2}.\label{est:ds:w}
	\end{align}
In addition,
	\begin{align}
	& \ \left\lVert A^* \left( \lambda \left( t \right) - \lambda_{*} + \theta t \dot{\lambda} \left( t \right) \right) \right\rVert ^{2} - \theta^{2} t^{2} \left\lVert A^* \dot{\lambda} \left( t \right) \right\rVert ^{2}  \nonumber \\
	=  & \  \left\lVert A^* \left( \lambda \left( t \right) - \lambda_{*} \right) \right\rVert ^{2} 
	+ 2 \theta t \left\langle A A^* \left( \lambda \left( t \right) - \lambda_{*} \right) , \dot{\lambda} \left( t \right) \right\rangle \nonumber \\
	=  & \ {\left( 1 - \theta \right) \left\lVert A^* \left( \lambda \left( t \right) - \lambda_{*} \right) \right\rVert ^{2}}
	+ \theta \left\lVert A^* \left( \lambda \left( t \right) - \lambda_{*} \right) \right\rVert ^{2} 	
	+ \theta t \dfrac{d}{dt} \left\lVert A^* \left( \lambda \left( t \right) - \lambda_{*} \right) \right\rVert ^{2} \nonumber \\
	=  & \ {\left( 1 - \theta \right) \left\lVert A^* \left( \lambda \left( t \right) - \lambda_{*} \right) \right\rVert ^{2}}
	+ \theta \dfrac{d}{dt} \left( t \left\lVert A^* \left( \lambda \left( t \right) - \lambda_{*} \right) \right\rVert ^{2} \right) . \label{est:ds:A-lambda}
	\end{align}
	Hence, using \eqref{est:ds:B}, \eqref{est:ds:w} and \eqref{est:ds:A-lambda} in \eqref{est:ds:sum}  we obtain
	\begin{align*}
	& \ \left\lVert \nabla f \left( x \left( t \right) \right) - \nabla f \left( x_{*} \right) + \beta A^* \left( Ax \left( t \right) - b \right) \right\rVert ^{2} + \left\lVert Ax \left( t \right) - b \right\rVert ^{2} \nonumber \\
	\geq & \  \left\lVert \left (\ddot x(t), \ddot \lambda (t) \right) + \dfrac{\alpha}{t} \left( \dot{x} \left( t \right) , \dot{\lambda} \left( t \right) \right) \right\rVert ^{2} - \left\lVert  \left (\ddot x(t), \ddot \lambda (t) \right)  \right\rVert ^{2} \nonumber \\
	& \ + \left\lVert A^* \left( \lambda \left( t \right) - \lambda_{*} + \theta t \dot{\lambda} \left( t \right) \right) \right\rVert ^{2} - \theta^{2} t^{2} \left\lVert A^* \dot{\lambda} \left( t \right) \right\rVert ^{2} + 2 \left\langle \ddot{x} \left( t \right) + \dfrac{\alpha}{t} \dot{x} \left( t \right) , A^* \left( \lambda \left( t \right) - \lambda_{*} \right) \right\rangle \nonumber \\
	\geq 	& \ \dfrac{\alpha}{t}  \dfrac{d}{dt} \left\lVert \left( \dot{x} \left( t \right) , \dot{\lambda} \left( t \right) \right) \right\rVert ^{2} + \theta \dfrac{d}{dt} \left( t \left\lVert A^* \left( \lambda \left( t \right) - \lambda_{*} \right) \right\rVert ^{2} \right) + {\left( 1 - \theta \right) \left\lVert A^* \left( \lambda \left( t \right) - \lambda_{*} \right) \right\rVert ^{2}} \nonumber \\
	& \ + 2 \left\langle \ddot{x} \left( t \right) + \dfrac{\alpha}{t} \dot{x} \left( t \right) , A^* \left( \lambda \left( t \right) - \lambda_{*} \right) \right\rangle.
	\end{align*}
	Since
	\begin{align*}
	& \ \left\lVert \nabla f \left( x \left( t \right) \right) - \nabla f \left( x_{*} \right) + \beta A^* \left( Ax \left( t \right) - b \right) \right\rVert ^{2}\\ 
\leq &  \ 2 \left\lVert \nabla f \left( x \left( t \right) \right) - \nabla f \left( x_{*} \right) \right\rVert ^{2} + 2 \beta^{2} \left\lVert A \right\rVert ^{2} \left\lVert Ax \left( t \right) - b \right\rVert ^{2},
	\end{align*}
the conclusion follows.
\end{proof}
The following proposition provides a further important integrability result.
\begin{prop}
	\label{prop:int:t-lambda}
	Let $\left( x , \lambda \right) \colon \left[ t_{0} , + \infty \right) \to \sX \times \sY$ be a solution of \eqref{ds:PD-AVD} and $\left( x_{*} , \lambda_{*} \right) \in \sol$. Then it holds:
	\begin{equation*}
	\int_{t_{0}}^{+ \infty} t \left\lVert A^* \left( \lambda \left( t \right) - \lambda_{*} \right) \right\rVert ^{2} dt < + \infty .
	\end{equation*}
\end{prop}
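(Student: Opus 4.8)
The plan is to read off the desired integrability directly from the differential inequality of Lemma \ref{lem:est-2}. Writing $g(t) := \lVert A^*(\lambda(t) - \lambda_*)\rVert^2$ for brevity, I would multiply that inequality by $t$ and integrate over $[t_0, T]$ for an arbitrary $T > t_0$, the aim being to isolate $\int_{t_0}^T t\,g(t)\,dt$ on the left-hand side. After multiplication by $t$ the right-hand side becomes $2t\lVert \nabla f(x(t)) - \nabla f(x_*)\rVert^2 + (2\beta^2\lVert A\rVert^2 + 1)\,t\lVert Ax(t) - b\rVert^2$, whose integral over $[t_0, +\infty)$ is finite by the two statements \eqref{int:grad} and \eqref{int:fea} of Proposition \ref{prop:int}. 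So the whole task reduces to showing that, apart from the $t\,g(t)$ contribution, every term produced by the left-hand side stays bounded uniformly in $T$.

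First I would dispose of the two ``exact derivative'' terms. Integrating $\alpha\,\frac{d}{dt}\lVert(\dot x, \dot\lambda)\rVert^2$ leaves only a boundary term, controlled by the boundedness of the velocity (indeed $\lVert(\dot x(t), \dot\lambda(t))\rVert = \bO(1/t)$ by Theorem \ref{thm:bnd}). For $\theta t\,\frac{d}{dt}(t\,g(t))$ I would integrate by parts, obtaining $\theta T^2 g(T) - \theta t_0^2 g(t_0) - \theta\int_{t_0}^T t\,g(t)\,dt$; here $\theta T^2 g(T) \geq 0$ is simply discarded when deriving an upper bound, while $-\theta\int t\,g$ merges with the $(1-\theta)\int t\,g$ already present to leave the effective coefficient $1 - 2\theta$, which is strictly positive under Assumption \ref{assume:strict}.

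The main obstacle is the cross term $2\int_{t_0}^T t\langle \ddot x(t) + \frac{\alpha}{t}\dot x(t), A^*(\lambda(t) - \lambda_*)\rangle\,dt$, which is not obviously integrable and still involves $A^*(\lambda - \lambda_*)$. My plan is to tame it by integration by parts. Setting $h(t) := \langle A\dot x(t), \lambda(t) - \lambda_*\rangle$, the identity $\langle A\ddot x, \lambda - \lambda_*\rangle = \dot h - \langle A\dot x, \dot\lambda\rangle$ together with $2t\dot h = \frac{d}{dt}(2th) - 2h$ rewrites the integrand as an exact derivative plus lower-order pieces, giving $[2th]_{t_0}^T + 2(\alpha - 1)\int_{t_0}^T h\,dt - 2\int_{t_0}^T t\langle A\dot x, \dot\lambda\rangle\,dt$. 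Each of these is then controlled: the boundary term $2Th(T)$ is $\bO(1)$ because $\lVert \dot x(T)\rVert = \bO(1/T)$ and the trajectory is bounded; the integral $\int t\langle A\dot x, \dot\lambda\rangle\,dt$ is dominated by $\lVert A\rVert\int t\lVert(\dot x, \dot\lambda)\rVert^2\,dt$, finite by \eqref{bnd:int:vel}; and for $\int h\,dt$ I would integrate by parts once more through $h = \frac{d}{dt}\langle Ax - b, \lambda - \lambda_*\rangle - \langle Ax - b, \dot\lambda\rangle$, whose boundary term vanishes since $\lVert Ax(t) - b\rVert = \bO(1/t^2)$ (relation \eqref{rate:fea}), while the remaining $\int \langle Ax - b, \dot\lambda\rangle\,dt$ is finite by Cauchy--Schwarz using \eqref{int:fea} and \eqref{bnd:int:vel}.

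Collecting the pieces, the integrated inequality takes the form $(1 - 2\theta)\int_{t_0}^T t\,g(t)\,dt \leq C$ with a constant $C$ independent of $T$. Since $1 - 2\theta > 0$, letting $T \to +\infty$ yields $\int_{t_0}^{+\infty} t\lVert A^*(\lambda(t) - \lambda_*)\rVert^2\,dt < +\infty$, which is the claim. The one genuinely delicate point is the double integration by parts on the cross term; everything else is a matter of invoking the rate and integrability estimates already established in Theorem \ref{thm:bnd} and Proposition \ref{prop:int}.
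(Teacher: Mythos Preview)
Your argument is correct, and it is genuinely more direct than the paper's. The paper first \emph{adds} the inequality of Lemma~\ref{lem:est-1} to that of Lemma~\ref{lem:est-2} to obtain \eqref{est:inq}, then multiplies by $t^{\alpha}$, integrates once, divides by $t^{\alpha}$, and integrates a second time from $t_{0}$ to $r$; the resulting double integrals are collapsed via the Fubini-type Lemma~\ref{lem:est-int}, and the final coefficient on $\int t\,\lVert A^{*}(\lambda-\lambda_{*})\rVert^{2}\,dt$ is $\dfrac{1-2\theta}{\alpha-1}$. You instead work with Lemma~\ref{lem:est-2} alone, use the weight $t$ rather than $t^{\alpha}$, and perform a single integration, extracting the coefficient $1-2\theta$ directly. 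Your route bypasses both Lemma~\ref{lem:est-1} and Lemma~\ref{lem:est-int} for this proposition; the price is that you must handle the cross term $2t\langle \ddot x+\frac{\alpha}{t}\dot x, A^{*}(\lambda-\lambda_{*})\rangle$ by a double integration by parts and then invoke the trajectory boundedness, the velocity rate $\lVert(\dot x,\dot\lambda)\rVert=\bO(1/t)$ and the feasibility rate $\lVert Ax-b\rVert=\bO(1/t^{2})$ from Theorem~\ref{thm:bnd} and Theorem~\ref{thm:rate}. All of these are already available under Assumption~\ref{assume:strict}, so the argument goes through cleanly.
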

\begin{proof}
From Lemma \ref{lem:est-1} and Lemma \ref{lem:est-2} we have for every $t \geq t_0$ that
	\begin{align}
	& \ \ddot{\varphi} \left( t \right) + \dfrac{\alpha}{t} \dot{\varphi} \left( t \right) + \theta t \dot{W} \left( t \right) 
	{+ \dfrac{\alpha}{t}} \dfrac{d}{dt} \left\lVert \left( \dot{x} \left( t \right) , \dot{\lambda} \left( t \right) \right) \right\rVert ^{2} \nonumber \\
	& \ {+ \theta} \dfrac{d}{dt} \left( t \left\lVert A^* \left( \lambda \left( t \right) - \lambda_{*} \right) \right\rVert ^{2} \right)	
	{+ 2} \left\langle \ddot{x} \left( t \right) + \dfrac{\alpha}{t} \dot{x} \left( t \right) , A^* \left( \lambda \left( t \right) - \lambda_{*} \right) \right\rangle \nonumber \\
	\leq & \ {\left( \theta - 1 \right) \left\lVert A^* \left( \lambda \left( t \right) - \lambda_{*} \right) \right\rVert ^{2} + \left( 2 - \dfrac{1}{2 \ell} \right) \left\lVert \nabla f \left( x \left( t \right) \right) - \nabla f \left( x_{*} \right) \right\rVert ^{2}} \nonumber \\
	& \ {+ \left( 2 \beta^{2} \left\lVert A \right\rVert ^{2} + 1 - \dfrac{\beta}{2} \right) \left\lVert Ax \left( t \right) - b \right\rVert ^{2}}\nonumber \\
\leq & \ {\left( \theta - 1 \right) \left\lVert A^* \left( \lambda \left( t \right) - \lambda_{*} \right) \right\rVert ^{2} + \Cgra \left\lVert \nabla f \left( x \left( t \right) \right) - \nabla f \left( x_{*} \right) \right\rVert ^{2}} + \Cfea \left\lVert Ax \left( t \right) - b \right\rVert ^{2}, \label{est:inq}
	\end{align}
where
$$\Cgra := \left[ 2 - \dfrac{1}{2 \ell} \right] _{+} \geq 0 \quad \mbox{and} \quad 
\Cfea  := \left[ 2 \beta^{2} \left\lVert A \right\rVert ^{2} + 1 - \dfrac{\beta}{2} \right] _{+} \geq 0. $$

	Multiplying \eqref{est:inq} by $t^{\alpha}$ and integrating, we obtain for every $t \geq t_0$
	\begin{align}
	& \ I_{1}(t) + \theta I_{2}(t) {+ \alpha I_{3}(t) + \theta I_{4}(t) + 2 I_{5}(t)} \nonumber \\
	\leq & \ {\left( \theta - 1 \right) \int_{t_{0}}^{t} s^{\alpha} \left\lVert A^* \left( \lambda \left( s \right) - \lambda_{*} \right) \right\rVert ^{2} ds + \Cgra \int_{t_{0}}^{t} s^{\alpha} \left\lVert \nabla f \left( x \left( s \right) \right) - \nabla f \left( x_{*} \right) \right\rVert ^{2} ds} \nonumber \\
 	& \ {+ \Cfea \int_{t_{0}}^{t} s^{\alpha} \left\lVert Ax \left( s \right) - b \right\rVert ^{2} ds}, \label{est:inq2}
	\end{align}
where
		\begin{align*}
		I_{1}(t)
		& := \int_{t_{0}}^{t} \left( s^{\alpha} \ddot{\varphi} \left( s \right) + \alpha s^{\alpha - 1} \dot{\varphi} \left( s \right) \right) ds, \\
		I_{2}(t)
		& := \int_{t_{0}}^{t} s^{\alpha + 1} \dot{W} \left( s \right) ds,\\
		I_{3}(t)
		& := \int_{t_{0}}^{t} s^{\alpha - 1} \left( \dfrac{d}{ds} \left\lVert \left( \dot{x} \left( s \right) , \dot{\lambda} \left( s \right) \right) \right\rVert ^{2} \right) ds,\\
		I_{4}(t)
		& := \int_{t_{0}}^{t} s^{\alpha} \left( \dfrac{d}{ds} \left( s \left\lVert A^* \left( \lambda \left( s \right) - \lambda_{*} \right) \right\rVert ^{2} \right) \right) ds, \\
		I_{5}(t)
		& := \int_{t_{0}}^{t} \left\langle s^{\alpha} \ddot{x} \left( s \right) + \alpha s^{\alpha - 1} \dot{x} \left( s \right) , A^* \left( \lambda \left( s \right) - \lambda_{*} \right) \right\rangle ds.
		\end{align*}
We will compute these five integrals separately. Let $t \geq t_0$ fixed.
\begin{itemize}
	\item
	\emph{The integral $I_{1}(t)$}.
	By the chain rule we have for all $s \in [t_0,t]$
	\begin{equation*}
	s^{\alpha} \ddot{\varphi} \left( s \right) + \alpha s^{\alpha - 1} \dot{\varphi} \left( s \right)
	= \dfrac{d}{ds} \left( s^{\alpha} \dot{\varphi} \left( s \right) \right),
	\end{equation*}
	which leads to
	\begin{equation}
	\label{int:com:I-1}
	0 = I_{1}(t) - t^{\alpha} \dot{\varphi} \left( t \right) + t_{0}^{\alpha} \dot{\varphi} \left( t_{0} \right) \leq I_{1}(t) - t^{\alpha} \dot{\varphi} \left( t \right) + t_{0}^{\alpha} |\dot{\varphi} \left( t_{0} \right)|.
	\end{equation}
	
	\item
	\emph{The integrals $I_{2}(t), I_{3}(t)$ and $I_{4}(t)$}.
Integration by parts gives
	\begin{equation*}
	I_{2} (t)= t^{\alpha + 1} W \left( t \right) - t_{0}^{\alpha + 1} W \left( t_{0} \right) - \left( \alpha + 1 \right) \int_{t_{0}}^{t} s^{\alpha} W \left( s \right) ds ,
	\end{equation*}
	which yields
	\begin{equation}
	\label{int:com:I-2}
	0 \leq t^{\alpha + 1} W \left( t \right) = I_{2}(t) + t_{0}^{\alpha + 1} W \left( t_{0} \right) + \left( \alpha + 1 \right) \int_{t_{0}}^{t} s^{\alpha} W \left( s \right) ds .
	\end{equation}
Similarly, we have
	\begin{align*}
	I_{3}(t)
	= & \ t^{\alpha - 1} \left\lVert \left( \dot{x} \left( t \right) , \dot{\lambda} \left( t \right) \right) \right\rVert ^{2} - t_{0}^{\alpha - 1} \left\lVert \left( \dot{x} \left( t_{0} \right) , \dot{\lambda} \left( t_{0} \right) \right) \right\rVert ^{2} \nonumber \\
	& \ - \left( \alpha - 1 \right) \int_{t_{0}}^{t} s^{\alpha - 2} \left\lVert \left( \dot{x} \left( s \right) , \dot{\lambda} \left( s \right) \right) \right\rVert ^{2} ds,
	\end{align*}
which yields
	\begin{align}
	0	& \leq I_{3}(t) + t_{0}^{\alpha - 1} \left\lVert \left( \dot{x} \left( t_{0} \right) , \dot{\lambda} \left( t_{0} \right) \right) \right\rVert ^{2} 
	+ \left( \alpha - 1 \right) \int_{t_{0}}^{t} s^{\alpha - 2} \left\lVert \left( \dot{x} \left( s \right) , \dot{\lambda} \left( s \right) \right) \right\rVert ^{2} ds \nonumber \\
	& \leq I_{3} (t) + t_{0}^{\alpha - 1} \left\lVert \left( \dot{x} \left( t_{0} \right) , \dot{\lambda} \left( t_{0} \right) \right) \right\rVert ^{2} 
	+ \dfrac{\alpha - 1}{t_{0}^{2}} \int_{t_{0}}^{t} s^{\alpha} \left\lVert \left( \dot{x} \left( s \right) , \dot{\lambda} \left( s \right) \right) \right\rVert ^{2} ds. \label{int:com:I-3} 
	\end{align}
Using again integration by parts, we have
	\begin{align*}
	I_{4}(t)
	& = t^{\alpha + 1} \left\lVert A^* \left( \lambda \left( t \right) - \lambda_{*} \right) \right\rVert ^{2} - t_{0}^{\alpha + 1} \left\lVert A^* \left( \lambda \left( t_{0} \right) - \lambda_{*} \right) \right\rVert ^{2} - \alpha \int_{t_{0}}^{t} s^{\alpha} \left\lVert A^* \left( \lambda \left( s \right) - \lambda_{*} \right) \right\rVert ^{2} ds \nonumber
	\end{align*}
	and from here
	\begin{equation}
	\label{int:com:I-4}
	t^{\alpha + 1} \left\lVert A^* \left( \lambda \left( t \right) - \lambda_{*} \right) \right\rVert ^{2}
	= I_{4} (t) + t_{0}^{\alpha + 1} \left\lVert A^* \left( \lambda \left( t_{0} \right) - \lambda_{*} \right) \right\rVert ^{2} + \alpha \int_{t_{0}}^{t} s^{\alpha} \left\lVert A^* \left( \lambda \left( s \right) - \lambda_{*} \right) \right\rVert ^{2} ds .
	\end{equation}
	\item
	\emph{The integral $I_{5}(t)$}.
Integration by parts gives
	\begin{align*}
	I_{5}(t)
	& = \int_{t_{0}}^{t} \left\langle \dfrac{d}{ds} \left( s^{\alpha} \dot{x} \left( s \right) \right) , A^* \left( \lambda \left( s \right) - \lambda_{*} \right) \right\rangle ds \nonumber \\
	& = t^{\alpha} \left\langle \dot{x} \left( t \right) , A^* \left( \lambda \left( t \right) - \lambda_{*} \right) \right\rangle 
	- t_{0}^{\alpha} \left\langle \dot{x} \left( t_{0} \right) , A^* \left( \lambda \left( t_{0} \right) - \lambda_{*} \right) \right\rangle 
	- \int_{t_{0}}^{t} s^{\alpha} \left\langle \dot{x} \left( s \right) , A^* \dot{\lambda} \left( s \right) \right\rangle ds. \nonumber
	\end{align*}
and, since
	\begin{align*}
	\int_{t_{0}}^{t} s^{\alpha} \left\langle \dot{x} \left( s \right) , A^* \dot{\lambda} \left( s \right) \right\rangle ds
	 \leq \dfrac{\max \left\lbrace 1 , \left\lVert A \right\rVert ^{2} \right\rbrace}{2} \int_{t_{0}}^{t} s^{\alpha} \left( \left\lVert \dot{x} \left( s \right) \right\rVert ^{2} + \left\lVert \dot{\lambda} \left( s \right) \right\rVert ^{2} \right) ds,
	\end{align*}
we obtain
	\begin{align}
	0 
	\leq & \ I_{5}(t) - t^{\alpha} \left\langle \dot{x} \left( t \right) , A^* \left( \lambda \left( t \right) - \lambda_{*} \right) \right\rangle 
	+ t_{0}^{\alpha} |\left\langle \dot{x} \left( t_{0} \right) , A^* \left( \lambda \left( t_{0} \right) - \lambda_{*} \right) \right\rangle| \nonumber \\
	& \ + \dfrac{\max \left\lbrace 1 , \left\lVert A \right\rVert ^{2} \right\rbrace}{2} \int_{t_{0}}^{t} s^{\alpha} \left\lVert \left( \dot{x} \left( s \right) , \dot{\lambda} \left( s \right) \right) \right\rVert ^{2} ds . \label{int:com:I-5}
	\end{align}
\end{itemize}
Combining \eqref{int:com:I-1}, \eqref{int:com:I-2}, \eqref{int:com:I-3}, \eqref{int:com:I-4} and \eqref{int:com:I-5}, we obtain
\begin{align}
& \ {\theta} t^{\alpha + 1} \left\lVert A^* \left( \lambda \left( t \right) - \lambda_{*} \right) \right\rVert ^{2} \nonumber\\
\leq & \ I_{1}(t) + \theta I_{2}(t) + {\alpha I_{3}(t) + \theta I_{4}(t)+ 2 I_{5}(t)}
- t^{\alpha} \dot{\varphi} \left( t \right) \nonumber \\
& \ + \int_{t_{0}}^{t} s^{\alpha} \left(\theta \left( \alpha + 1 \right) W \left( s \right) + {\left( \dfrac{\alpha \left( \alpha - 1 \right)}{t_{0}^{2}} + \max \left\lbrace 1 , \left\lVert A \right\rVert ^{2} \right\rbrace \right) \left\lVert \left( \dot{x} \left( s \right) , \dot{\lambda} \left( s \right) \right) \right\rVert ^{2}} \right) ds \nonumber \\
& \ {+ \theta \alpha} \int_{t_{0}}^{t} s^{\alpha} \left\lVert A^* \left( \lambda \left( s \right) - \lambda_{*} \right) \right\rVert ^{2} ds {- 2 t^{\alpha}} \left\langle \dot{x} \left( t \right) , A^* \left( \lambda \left( t \right) - \lambda_{*} \right) \right\rangle + \Cacc \nonumber \\
\leq & \ {- t^{\alpha} \dot{\varphi} \left( t \right) 
	+ \int_{t_{0}}^{t} s^{\alpha} V \left( s \right) ds + \left( \theta \left( \alpha + 1 \right) - 1 \right) \int_{t_{0}}^{t} s^{\alpha} \left\lVert A^* \left( \lambda \left( s \right) - \lambda_{*} \right) \right\rVert ^{2} ds} \nonumber \\
	& \ {- 2 t^{\alpha} \left\langle \dot{x} \left( t \right) , A^* \left( \lambda \left( t \right) - \lambda_{*} \right) \right\rangle + \Cacc} , \label{int:sum}
\end{align}
where {the last inequality follows from \eqref{est:inq2}},
\begin{align*}
V \left( s \right)
 := & \ \theta \left( \alpha + 1 \right) W \left( s \right) + {\left( \dfrac{\alpha \left( \alpha - 1 \right)}{t_{0}^{2}} + \max \left\lbrace 1 , \left\lVert A \right\rVert ^{2} \right\rbrace \right) \left\lVert \left( \dot{x} \left( s \right) , \dot{\lambda} \left( s \right) \right) \right\rVert ^{2}}\nonumber \\
 & \ {+ \Cgra \left\lVert \nabla f \left( x \left( s \right) \right) - \nabla f \left( x_{*} \right) \right\rVert ^{2} + \Cfea \left\lVert Ax \left( s \right) - b \right\rVert ^{2}}  \geq 0 \quad \forall s \geq t_0
\end{align*}
and
\begin{align*}
\Cacc
 := & \ t_{0}^{\alpha} \left\lvert \dot{\varphi} \left( t_{0} \right) \right\rvert
+ \theta t_{0}^{\alpha + 1} W \left( t_{0} \right) {+ \alpha t_{0}^{\alpha - 1} \left\lVert \left( \dot{x} \left( t_{0} \right) , \dot{\lambda} \left( t_{0} \right) \right) \right\rVert ^{2}} \nonumber \\
& \ {+ \theta} t_{0}^{\alpha + 1} \left\lVert A^* \left( \lambda \left( t_{0} \right) - \lambda_{*} \right) \right\rVert ^{2}
+ {2 t_{0}^{\alpha}} \left\lvert \left\langle \dot{x} \left( t_{0} \right) , A^* \left( \lambda \left( t_{0} \right) - \lambda_{*} \right) \right\rangle \right\rvert \geq 0.
\end{align*}
Dividing \eqref{int:sum} by $t^{\alpha}$ we obtain from here
\begin{align}
{\theta} t \left\lVert A^* \left( \lambda \left( t \right) - \lambda_{*} \right) \right\rVert ^{2}
\leq & - \dot{\varphi} \left( t \right) 
+ \dfrac{1}{t^{\alpha}} \int_{t_{0}}^{t} s^{\alpha} V \left( s \right) ds {+ \dfrac{\left( \theta \left( \alpha + 1 \right) - 1 \right)}{t^{\alpha}}} \int_{t_{0}}^{t} s^{\alpha} \left\lVert A^* \left( \lambda \left( s \right) - \lambda_{*} \right) \right\rVert ^{2} ds \nonumber \\
& {- 2} \left\langle \dot{x} \left( t \right) , A^* \left( \lambda \left( t \right) - \lambda_{*} \right) \right\rangle + \dfrac{\Cacc}{t^{\alpha}},\label{int:inq}
\end{align}
which holds for every $t \geq t_{0}$. We choose $r \geq t_{0}$ and integrate \eqref{int:inq} from $t_{0}$ to $r$. This yields
\begin{align}
{\theta} \int_{t_{0}}^{r} t \left\lVert A^* \left( \lambda \left( t \right) - \lambda_{*} \right) \right\rVert ^{2} dt
\leq & \ \varphi \left( t_{0} \right) - \varphi \left( r \right) 
+ \int_{t_{0}}^{r} \dfrac{1}{t^{\alpha}} \left( \int_{t_{0}}^{t} s^{\alpha} V \left( s \right) ds \right) dt \nonumber \\
& + {\left( \theta \left( \alpha + 1 \right) - 1 \right)} \int_{t_{0}}^{r} \dfrac{1}{t^{\alpha}} \left( \int_{t_{0}}^{t} s^{\alpha} \left\lVert A^* \left( \lambda \left( s \right) - \lambda_{*} \right) \right\rVert ^{2} ds \right) dt \nonumber \\
& {- 2} \int_{t_{0}}^{r} \left\langle A \dot{x} \left( t \right) , \lambda \left( t \right) - \lambda_{*} \right\rangle dt + \Cacc \int_{t_{0}}^{r} \dfrac{1}{t^{\alpha}} dt. \label{int:int}
\end{align}
Recall that
\begin{equation}
\label{int:1-t-alpha}
\int_{t_{0}}^{r} \dfrac{1}{t^{\alpha}} dt \leq \dfrac{1}{\left( \alpha - 1 \right) t_{0}^{\alpha - 1}} .
\end{equation}
Moreover, by applying Lemma \ref{lem:est-int} with $h \left( t \right) := t V \left( t \right)$, it yields
\begin{align}
\int_{t_{0}}^{r} \dfrac{1}{t^{\alpha}} \left( \int_{t_{0}}^{t} s^{\alpha} V \left( s \right) ds \right) dt
& \leq \dfrac{1}{\alpha - 1} \int_{t_{0}}^{r} t V \left( t \right) dt . \label{int:Fubini:W}
\end{align}
Similarly, applying the same result with $h \left( t \right) := t \left\lVert A^* \left( \lambda \left( t \right) - \lambda_{*} \right) \right\rVert ^{2}$ gives
\begin{equation}
\label{int:Fubini:s-lambda}
\int_{t_{0}}^{r} \dfrac{1}{t^{\alpha}} \left( \int_{t_{0}}^{t} s^{\alpha} \left\lVert A^* \left( \lambda \left( s \right) - \lambda_{*} \right) \right\rVert ^{2} ds \right) dt
\leq \dfrac{1}{\alpha - 1} \int_{t_{0}}^{r} t \left\lVert A^* \left( \lambda \left( t \right) - \lambda_{*} \right) \right\rVert ^{2} dt .
\end{equation}
Using again integration by parts we obtain
\begin{align}
& \ - \int_{t_{0}}^{r} \left\langle A \dot{x} \left( t \right) , \lambda \left( t \right) - \lambda_{*} \right\rangle dt \nonumber \\
= & \ - \left\langle Ax \left( r \right) - b , \lambda \left( r \right) - \lambda_{*} \right\rangle
+ \left\langle Ax \left( t_{0} \right) - b , \lambda \left( t_{0} \right) - \lambda_{*} \right\rangle 
+ \int_{t_{0}}^{r} \left\langle Ax \left( t \right) - b , \dot{\lambda} \left( t \right) \right\rangle dt \nonumber \\
\leq & \ \left\lVert Ax \left( r \right) - b \right\rVert \left\lVert \lambda \left( r \right) - \lambda_{*} \right\rVert + \left\lVert Ax \left( t_{0} \right) - b \right\rVert \left\lVert \lambda \left( t_{0} \right) - \lambda_{*} \right\rVert + \int_{t_{0}}^{r} \left\langle Ax \left( t \right) - b , \dot{\lambda} \left( t \right) \right\rangle dt \nonumber\\
\leq & \ \sup_{t \geq t_{0}} \left\lbrace \left\lVert Ax \left( t \right) - b \right\rVert \left\lVert \lambda \left( t \right) - \lambda_{*} \right\rVert \right\rbrace 
+ \left\lVert Ax \left( t_{0} \right) - b \right\rVert \left\lVert \lambda \left( t_{0} \right) - \lambda_{*} \right\rVert \nonumber \\
& \ + \dfrac{1}{2} \int_{t_{0}}^{r} \left( \left\lVert Ax \left( t \right) - b \right\rVert ^{2} + \left\lVert \dot{\lambda} \left( t \right) \right\rVert ^{2} \right) dt.\label{int:inner}
\end{align}
Due to the boundedness of the trajectory we have $$\sup\limits_{t \geq t_{0}} \left\lbrace \left\lVert Ax \left( t \right) - b \right\rVert \left\lVert \lambda \left( t \right) - \lambda_{*} \right\rVert \right\rbrace < + \infty.$$
Combining \eqref{int:1-t-alpha}, \eqref{int:Fubini:W}, \eqref{int:Fubini:s-lambda} and \eqref{int:inner} with  \eqref{int:int} and using the nonnegativity of $\varphi$, we obtain
\begin{align}
& {\dfrac{1 - 2 \theta}{\alpha - 1} \int_{t_{0}}^{r} t \left\lVert A^* \left( \lambda \left( t \right) - \lambda_{*} \right) \right\rVert ^{2} dt = \left( \theta + \dfrac{1 - \theta \left( \alpha + 1 \right)}{\alpha - 1} \right)} \int_{t_{0}}^{r} t \left\lVert A^* \left( \lambda \left( t \right) - \lambda_{*} \right) \right\rVert ^{2} dt \nonumber \\
\leq  & \  \dfrac{1}{\alpha - 1} \int_{t_{0}}^{r} t V \left( t \right) dt 
+ \int_{t_{0}}^{r} t \left( \left\lVert Ax \left( t \right) - b \right\rVert ^{2} + \left\lVert \dot{\lambda} \left( t \right) \right\rVert ^{2} \right) dt + \Czms \nonumber \\
\leq & \ \dfrac{1}{\alpha - 1} \int_{t_{0}}^{+ \infty} t V \left( t \right) dt 
+ \int_{t_{0}}^{+ \infty} t \left( \left\lVert Ax \left( t \right) - b \right\rVert ^{2} + \left\lVert \dot{\lambda} \left( t \right) \right\rVert ^{2} \right) dt + \Czms , \label{int:fin}
\end{align}
where
\begin{align*}
\Czms 
:= & \ \varphi \left( t_{0} \right) 
+ 2 \sup_{t \geq t_{0}} \left\lbrace \left\lVert Ax \left( t \right) - b \right\rVert \left\lVert \lambda \left( t \right) - \lambda_{*} \right\rVert \right\rbrace 
+ 2 \left\lVert Ax \left( t_{0} \right) - b \right\rVert \left\lVert \lambda \left( t_{0} \right) - \lambda_{*} \right\rVert + \dfrac{\Cacc}{\left( \alpha - 1 \right) t_{0}^{\alpha - 1}}.
\end{align*}
According to \eqref{bnd:int:Lag} and \eqref{bnd:int:vel} in Theorem \ref{thm:bnd} as well as \eqref{int:grad} and \eqref{int:fea} in Proposition \ref{prop:int}, we conclude that both $t \mapsto  tV \left( t \right)$ and $t \mapsto t \left( \left\lVert Ax \left( t \right) - b \right\rVert ^{2} + \left\lVert \dot{\lambda} \left( t \right) \right\rVert ^{2} \right)$ belong to $\sL^{1} \left(\left[ t_{0} , + \infty \right) \right)$, therefore the right-hand side of \eqref{int:fin} is finite.

Hence, by passing $r \to + \infty$ in \eqref{int:fin} and by taking into account the choice of the parameters $\theta$ and $\alpha$, we obtain the desired statement.
\end{proof}

The following result will be used to show the weak convergence of the trajectory, but it also has its own interest, since it provides the convergence rate for the KKT system associated to problem \eqref{intro:pb}.

\begin{thm}
	\label{thm:rate-KKT}
	Let $\left( x , \lambda \right) \colon \left[ t_{0} , + \infty \right) \to \sX \times \sY$ be a solution of \eqref{ds:PD-AVD} and $\left( x_{*} , \lambda_{*} \right) \in \sol$. Then it holds:
	\begin{equation}
	\label{rate-KKT:split}
	\left\lVert A^* \left( \lambda \left( t \right) - \lambda_{*} \right) \right\rVert 
	= o \left( \dfrac{1}{\sqrt{t}} \right) \quad \mbox{and} \quad \left\lVert \nabla f \left( x \left( t \right) \right) - \nabla f \left( x_{*} \right) \right\rVert
	= o \left( \dfrac{1}{\sqrt{t}} \right) \textrm{ as } t \to + \infty .
	\end{equation}
Consequently,
	\begin{align*}
	\left\lVert \nabla_{x} \Lag \bigl( x \left( t \right) , \lambda \left( t \right) \bigr) \right\rVert 
	& = \left\lVert \nabla f \left( x \left( t \right) \right) + A^* \lambda \left( t \right) \right\rVert
	= o \left( \dfrac{1}{\sqrt{t}} \right) \textrm{ as } t \to + \infty,
\end{align*}
while, as seen in Section \ref{sec:Lya},
\begin{align*}
	\left\lVert \nabla_{\lambda} \Lag \bigl( x \left( t \right) , \lambda \left( t \right) \bigr) \right\rVert 
	& = \left\lVert Ax \left( t \right) - b \right\rVert = \bO \left( \dfrac{1}{t^{2}} \right) \textrm{ as } t \to + \infty .
	\end{align*}
\end{thm}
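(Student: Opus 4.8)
The plan is to upgrade the two squared-integrability estimates already at hand into pointwise $o\left( 1/\sqrt{t} \right)$ decay, and then to read off the two announced gradient rates by the triangle inequality. Write $p \left( t \right) := \left\lVert \nabla f \left( x \left( t \right) \right) - \nabla f \left( x_{*} \right) \right\rVert ^{2}$ and $q \left( t \right) := \left\lVert A^{*} \left( \lambda \left( t \right) - \lambda_{*} \right) \right\rVert ^{2}$. By Proposition \ref{prop:int} and Proposition \ref{prop:int:t-lambda} we have $\int_{t_{0}}^{+\infty} t\, p \left( t \right) dt < +\infty$ and $\int_{t_{0}}^{+\infty} t\, q \left( t \right) dt < +\infty$; moreover, under Assumption \ref{assume:strict} one has $\xi = \theta \left( \alpha - 1 \right) - 1 > 0$, so \eqref{bnd:int:vel} yields $\int_{t_{0}}^{+\infty} t \left\lVert \left( \dot{x} \left( t \right) , \dot{\lambda} \left( t \right) \right) \right\rVert ^{2} dt < +\infty$. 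The two claimed rates are precisely $t\, p \left( t \right) \to 0$ and $t\, q \left( t \right) \to 0$ as $t \to +\infty$.

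The device will be the same limit-existence argument as in the proof of Lemma \ref{lem:lim-phi}: if $h \colon \left[ t_{0} , +\infty \right) \to \left[ 0 , +\infty \right)$ is locally absolutely continuous with $h \in \sL^{1} \left( \left[ t_{0} , +\infty \right) \right)$ and $\dot{h} \in \sL^{1} \left( \left[ t_{0} , +\infty \right) \right)$, then $\lim_{t \to +\infty} h \left( t \right)$ exists, and this limit must vanish since $h$ is nonnegative and integrable. I will apply this twice, with $h \left( t \right) := t\, p \left( t \right)$ and with $h \left( t \right) := t\, q \left( t \right)$; in both cases $h \geq 0$ and $h \in \sL^{1}$ by the integrability just recalled, so everything reduces to verifying $\dot{h} \in \sL^{1}$.

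For $h = t\, p$ we have $\dot{h} = p + t\, \dot{p}$, hence $\left\lvert \dot{h} \right\rvert \leq p + t \left\lvert \dot{p} \right\rvert$. Since $\nabla f$ is $\ell$-Lipschitz and $x \left( \cdot \right)$ is of class $C^{2}$, the map $t \mapsto \nabla f \left( x \left( t \right) \right)$ is locally Lipschitz, so $p$ is locally absolutely continuous and $\left\lvert \dot{p} \left( t \right) \right\rvert \leq 2 \ell \sqrt{p \left( t \right)} \left\lVert \dot{x} \left( t \right) \right\rVert$ for almost every $t$. The bound $\int_{t_{0}}^{+\infty} p < +\infty$ follows from $\int_{t_{0}}^{+\infty} t\, p < +\infty$, while the Cauchy--Schwarz inequality gives
\begin{equation*}
\int_{t_{0}}^{+\infty} t \left\lvert \dot{p} \left( t \right) \right\rvert dt \leq 2 \ell \int_{t_{0}}^{+\infty} \sqrt{p \left( t \right)}\, t \left\lVert \dot{x} \left( t \right) \right\rVert dt \leq 2 \ell \left( \int_{t_{0}}^{+\infty} t\, p \left( t \right) dt \right) ^{1/2} \left( \int_{t_{0}}^{+\infty} t \left\lVert \dot{x} \left( t \right) \right\rVert ^{2} dt \right) ^{1/2} < +\infty .
\end{equation*}
Thus $\dot{h} \in \sL^{1}$ and therefore $t\, p \left( t \right) \to 0$. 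The argument for $h = t\, q$ is identical, now using $\left\lvert \dot{q} \left( t \right) \right\rvert \leq 2 \left\lVert A \right\rVert \sqrt{q \left( t \right)} \left\lVert \dot{\lambda} \left( t \right) \right\rVert$ and pairing $\int t\, q$ with $\int t \left\lVert \dot{\lambda} \right\rVert ^{2}$; this gives $t\, q \left( t \right) \to 0$ and establishes \eqref{rate-KKT:split}.

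The two consequences are then immediate. Because $\left( x_{*} , \lambda_{*} \right) \in \sol$ satisfies $\nabla f \left( x_{*} \right) + A^{*} \lambda_{*} = 0$, the decomposition $\nabla f \left( x \left( t \right) \right) + A^{*} \lambda \left( t \right) = \bigl( \nabla f \left( x \left( t \right) \right) - \nabla f \left( x_{*} \right) \bigr) + A^{*} \bigl( \lambda \left( t \right) - \lambda_{*} \bigr)$ together with the triangle inequality and \eqref{rate-KKT:split} yields $\left\lVert \nabla_{x} \Lag \bigl( x \left( t \right) , \lambda \left( t \right) \bigr) \right\rVert = o \left( 1/\sqrt{t} \right)$, while $\left\lVert \nabla_{\lambda} \Lag \bigl( x \left( t \right) , \lambda \left( t \right) \bigr) \right\rVert = \left\lVert Ax \left( t \right) - b \right\rVert = \bO \left( 1/t^{2} \right)$ is exactly \eqref{rate:fea}. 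I expect the only real obstacle to be the integrability of $t \left\lvert \dot{p} \right\rvert$ and $t \left\lvert \dot{q} \right\rvert$: a naive estimate leaves the non-integrable factor $\sqrt{p}$ (respectively $\sqrt{q}$), and the crux is to absorb it through the Cauchy--Schwarz pairing above against $\int_{t_{0}}^{+\infty} t \left\lVert \left( \dot{x} , \dot{\lambda} \right) \right\rVert ^{2} dt < +\infty$, which is precisely where the strict condition $\theta > 1/ \left( \alpha - 1 \right)$ (guaranteeing $\xi > 0$) in Assumption \ref{assume:strict} enters.
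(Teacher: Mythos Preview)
Your proof is correct and follows essentially the same approach as the paper: both arguments set $F(t)=t\,p(t)$ (respectively $t\,q(t)$), use the integrability from Proposition~\ref{prop:int} and Proposition~\ref{prop:int:t-lambda} to get $F\in\sL^{1}$, bound $\dot F$ by an $\sL^{1}$ function, and conclude $F(t)\to 0$. The only cosmetic difference is that the paper controls the cross term via $2ab\le a^{2}+b^{2}$ and invokes Lemma~\ref{lem:lim-0} with a one-sided bound $\dot F\le G$, whereas you bound $|\dot F|$ and handle the cross term by Cauchy--Schwarz on the integral; the two devices are interchangeable here.
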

\begin{proof}
The continously differentiable functions
	\begin{align*}
	F \left( t \right)
	& := t \left\lVert A^* \left( \lambda \left( t \right) - \lambda_{*} \right) \right\rVert ^{2} \geq 0 \nonumber \\
	G \left( t \right)
	& := \left( 1 + t \left\lVert A \right\rVert ^{2} \right) \left\lVert A^* \left( \lambda \left( t \right) - \lambda_{*} \right) \right\rVert ^{2} + t \left\lVert \dot{\lambda} \left( t \right) \right\rVert ^{2}
	\end{align*}
defined on $[t_0,+\infty)$ belong, according to Proposition \ref{prop:int:t-lambda} and Theorem \ref{thm:bnd}, to 
$\sL^{1} \left([t_{0} , + \infty) \right)$. For every $t \geq t_{0}$ we have
	\begin{align*}
	\dfrac{d}{dt} \left( t \left\lVert A^* \left( \lambda \left( t \right) - \lambda_{*} \right) \right\rVert ^{2} \right)
	& = \left\lVert A^* \left( \lambda \left( t \right) - \lambda_{*} \right) \right\rVert ^{2}
	+ 2t \left\langle AA^* \left( \lambda \left( t \right) - \lambda_{*} \right) , \dot{\lambda} \left( t \right) \right\rangle \nonumber \\
	& \leq \left\lVert A^* \left( \lambda \left( t \right) - \lambda_{*} \right) \right\rVert ^{2}
	+ t \left( \left\lVert AA^* \left( \lambda \left( t \right) - \lambda_{*} \right) \right\rVert ^{2} + \left\lVert \dot{\lambda} \left( t \right) \right\rVert ^{2} \right) \nonumber \\
	& \leq \left( 1 + t \left\lVert A \right\rVert ^{2} \right)\left\lVert A^* \left( \lambda \left( t \right) - \lambda_{*} \right) \right\rVert ^{2}+ t \left\lVert \dot{\lambda} \left( t \right) \right\rVert ^{2},
	\end{align*}
thus, from Lemma \ref{lem:lim-0} we get
	\begin{equation}
	\label{rate-KKT:A-lambda}
	\left\lVert A^* \left( \lambda \left( t \right) - \lambda_{*} \right) \right\rVert = o \left( \dfrac{1}{\sqrt{t}} \right) \quad \textrm{as} \quad t \to + \infty.
	\end{equation}
The functions
	\begin{align*}
	F \left( t \right)
	& := t \left\lVert \nabla f \left( x \left( t \right) \right) - \nabla f \left( x_{*} \right) \right\rVert ^{2} \geq 0 \nonumber \\
	G \left( t \right)
	& := \left( 1 + t \right) \left\lVert \nabla f \left( x \left( t \right) \right) - \nabla f \left( x_{*} \right) \right\rVert ^{2} + t \ell^{2} \left\lVert \dot{x} \left( t \right) \right\rVert ^{2}
	\end{align*}
defined on $[t_0,+\infty)$ are locally absolutely continuous and belong, according to Proposition \ref{prop:int} and Theorem \ref{thm:bnd}, to  $\sL^{1} \left([t_{0} , + \infty) \right)$. For almost every $t \geq t_{0}$ we have
	\begin{align*}
	& \dfrac{d}{dt} \left( t \left\lVert \nabla f \left( x \left( t \right) \right) - \nabla f \left( x_{*} \right) \right\rVert ^{2} \right) \nonumber \\
	= \ 	& \left\lVert \nabla f \left( x \left( t \right) \right) - \nabla f \left( x_{*} \right) \right\rVert ^{2}
	+ 2t \left\langle \nabla f \left( x \left( t \right) \right) - \nabla f \left( x_{*} \right) , \dfrac{d}{dt} \nabla f \left( x \left( t \right) \right) \right\rangle \nonumber \\
	\leq \ 	& \left( 1 + t \right) \left\lVert \nabla f \left( x \left( t \right) \right) - \nabla f \left( x_{*} \right) \right\rVert ^{2} + t \left\lVert \dfrac{d}{dt} \nabla f \left( x \left( t \right) \right) \right\rVert ^{2} \nonumber \\
	\leq \ 	& \left( 1 + t \right) \left\lVert \nabla f \left( x \left( t \right) \right) - \nabla f \left( x_{*} \right) \right\rVert ^{2} + t \ell^{2} \left\lVert \dot{x} \left( t \right) \right\rVert ^{2} ,
	\end{align*}
	where the last inequality follows from the fact that $\nabla f$ is $\ell-$Lipschitz continuous. From Lemma \ref{lem:lim-0} we get
	\begin{equation*}
	\left\lVert \nabla f \left( x \left( t \right) \right) - \nabla f \left( x_{*} \right) \right\rVert = o \left( \dfrac{1}{
	\sqrt{t}} \right) \quad \textrm{as} \quad t \to + \infty.
	\end{equation*}
According to \eqref{rate-KKT:A-lambda} we have
	\begin{align*}
	\left\lVert \nabla_{x} \Lag \bigl( x \left( t \right) , \lambda \left( t \right) \bigr) \right\rVert 
	& = \left\lVert \nabla f \left( x \left( t \right) \right) + A^* \lambda \left( t \right) \right\rVert \nonumber \\
	& \leq \left\lVert \nabla f \left( x \left( t \right) \right) - \nabla f \left( x_{*} \right) \right\rVert + \left\lVert A^* \left( \lambda \left( t \right) - \lambda_{*} \right) \right\rVert
	= o \left( \dfrac{1}{\sqrt{t}} \right) \quad \textrm{as} \quad t \to + \infty,
	\end{align*}
while Theorem \ref{thm:rate} gives
	\begin{equation*}
	\left\lVert \nabla_{\lambda} \Lag \bigl( x \left( t \right) , \lambda \left( t \right) \bigr) \right\rVert = \left\lVert Ax \left( t \right) - b \right\rVert = \bO \left( \dfrac{1}{t^{2}} \right) \quad \textrm{as} \quad t \to + \infty.
	\end{equation*}
\end{proof}

We are now in the position to prove the main result of this section.
\begin{thm}\label{weakconvergence}
Let $\left( x , \lambda \right) \colon \left[ t_{0} , + \infty \right) \to \sX \times \sY$ be a solution of \eqref{ds:PD-AVD} and $\left( x_{*} , \lambda_{*} \right) \in \sol$. Then $\bigl( x \left( t \right) , \lambda \left( t \right) \bigr)$ converges weakly to a primal-dual optimal solution of \eqref{intro:pb} as $t \to + \infty$.
\end{thm}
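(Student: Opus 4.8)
The plan is to invoke the continuous-time Opial Lemma (Lemma \ref{lem:Opial}) with the nonempty closed convex set $\sol$ playing the role of the target set. Two conditions must be verified: first, that for every $(x_*,\lambda_*) \in \sol$ the limit $\lim_{t\to+\infty} \|(x(t),\lambda(t)) - (x_*,\lambda_*)\|$ exists; second, that every weak sequential cluster point of the trajectory $(x(t),\lambda(t))_{t\ge t_0}$ belongs to $\sol$. Once both are established, Opial's Lemma yields weak convergence of $(x(t),\lambda(t))$ to a single element of $\sol$, which by definition is a primal-dual optimal solution of \eqref{intro:pb}.

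The first Opial condition is essentially already in hand. For an arbitrarily fixed $(x_*,\lambda_*) \in \sol$, the function $\varphi(t) = \frac12\|(x(t),\lambda(t)) - (x_*,\lambda_*)\|^2$ has a finite limit as $t\to+\infty$ by Lemma \ref{lem:lim-phi}. Since $\sqrt{2\varphi(t)} = \|(x(t),\lambda(t)) - (x_*,\lambda_*)\|$ and the square root is continuous, the distance to $(x_*,\lambda_*)$ converges as well; as $(x_*,\lambda_*)$ was arbitrary in $\sol$, condition (i) of Opial's Lemma holds.

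For the second condition I would argue through the maximally monotone operator $\TL$ from \eqref{TL}. Recall that $(x,\lambda) \in \sol$ if and only if $\TL(x,\lambda) = 0$, see \eqref{moninclusion}, and that $\TL(x,\lambda) = (\nabla f(x) + A^*\lambda,\, b - Ax)$. The trajectory is bounded by Theorem \ref{thm:bnd}, so in the Hilbert space $\sX\times\sY$ it admits weak sequential cluster points; let $(\bar x,\bar\lambda)$ be one of them, realized along a sequence $t_n\to+\infty$ with $(x(t_n),\lambda(t_n)) \rightharpoonup (\bar x,\bar\lambda)$. By Theorem \ref{thm:rate-KKT} both components of $\TL(x(t_n),\lambda(t_n))$ converge strongly to zero, namely $\|\nabla f(x(t_n)) + A^*\lambda(t_n)\| = o(1/\sqrt{t_n})$ and $\|Ax(t_n) - b\| = \bO(1/t_n^2)$, so that $\TL(x(t_n),\lambda(t_n)) \to 0$ strongly while $(x(t_n),\lambda(t_n)) \rightharpoonup (\bar x,\bar\lambda)$ weakly.

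The crux, and the step I expect to be the main obstacle, is to pass to the limit in this weak/strong pair and conclude $\TL(\bar x,\bar\lambda)=0$. This is exactly the sequential closedness of the graph of a maximally monotone operator in the weak$\times$strong topology of $\sX\times\sY\times\sX\times\sY$: if $(x(t_n),\lambda(t_n)) \rightharpoonup (\bar x,\bar\lambda)$ and $\TL(x(t_n),\lambda(t_n)) \to 0$ strongly, then $(\bar x,\bar\lambda,0)$ lies in the graph of $\TL$, whence $\TL(\bar x,\bar\lambda)=0$ and $(\bar x,\bar\lambda)\in\sol$. Invoking this demiclosedness property, a standard consequence of maximal monotonicity of the type already used for $\TL$ via \cite{Bauschke-Combettes:book}, establishes condition (ii). With both Opial conditions verified, the weak convergence of the full primal-dual trajectory to an element of $\sol$ follows.
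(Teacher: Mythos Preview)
Your proposal is correct and follows essentially the same route as the paper: verify the two hypotheses of the continuous Opial Lemma, the first via Lemma \ref{lem:lim-phi} and the second by combining the strong convergence $\TL(x(t_n),\lambda(t_n))\to 0$ from Theorem \ref{thm:rate-KKT} (and Theorem \ref{thm:rate}) with the weak--strong sequential closedness of the graph of the maximally monotone operator $\TL$ (the paper cites \cite[Proposition 20.38]{Bauschke-Combettes:book} for this step).
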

\begin{proof}
We have seen in Lemma \ref{lem:lim-phi} that the limit $\lim\limits_{t \to + \infty} \left\lVert \bigl( x \left( t \right) , \lambda \left( t \right) \bigr) - \left( x_{*} , \lambda_{*} \right) \right\rVert$ exists for every $\left( x_{*} , \lambda_{*} \right) \in \sol$, which proves condition (i) of  Opial's Lemma (see Lemma \ref{lem:Opial}). 

In order to prove condition (ii), we consider $\left( \tx , \tlambda \right)$ an arbitrary weak sequential cluster point of $\bigl( x \left( t \right) , \lambda \left( t \right) \bigr)$ as $t \rightarrow +\infty$, which means that there exists a sequence $\left\lbrace \left( x \left( t_{n} \right) , \lambda \left( t_{n} \right) \right) \right\rbrace _{n \geq 0}$ such that
	\begin{equation*}
	\left( x \left( t_{n} \right) , \lambda \left( t_{n} \right) \right) \rightharpoonup \left( \tx , \tlambda \right) \quad \textrm{as} \ n \to + \infty .
	\end{equation*}
		
	Theorem \ref{thm:rate-KKT} and Theorem \ref{thm:rate} allow us to deduce that
	\begin{equation*}
	\nabla f \left( x \left( t_{n} \right) \right) + A^* \lambda \left( t_{n} \right) 
	\to \nabla f \left( x_{*} \right) + A^* \lambda_{*} = 0 
	\quad \textrm{as} \quad n \to + \infty .
	\end{equation*}
	and
	\begin{equation*}
	A x \left( t_{n} \right) - b \to 0 \quad \textrm{as} \quad n \to + \infty,
	\end{equation*}
respectively. Since the graph of the operator $\TL$ introduced in \eqref{TL} is sequentially closed in $\left( \sX \times \sY \right) ^{\mathrm{weak}} \times \left( \sX \times \sY \right) ^{\mathrm{strong}}$ (cf. \cite[Proposition 20.38]{Bauschke-Combettes:book}), we have that
	\begin{equation*}
	\begin{cases}
	\nabla f \left( \tx \right) + A^* \tlambda & = \nabla f \left( x_{*} \right) + A^* \lambda_{*} = 0 \\
	A \tx - b & = Ax_{*} - b = 0
	\end{cases}.
	\end{equation*}
	In other words, $\left( \tx , \tlambda \right)$ belongs to $\sol$ and the proof is complete.
\end{proof}

\begin{rmk}
In case $A := 0$ and $b := 0$, the optimization problem \eqref{intro:pb} reduces to the unconstrained optimization problem
	\begin{equation}
	\label{intro:pb-un}
	\min\limits_{x \in \sX} f \left( x \right) .
	\end{equation}
We will prove that inwe obtain as particular case all convergence results stated in the literature for Nesterov's accelerated gradient system \eqref{ds:AVD}.

Indeed, the system of optimality conditions \eqref{intro:opt-Lag} read in this case
	\begin{equation*}
	\left( x_{*} , \lambda_{*} \right) \in \sol
	\Leftrightarrow \nabla f \left( x_{*} \right) = 0 \textrm{ and } \lambda_{*} \in \sY,
	\end{equation*}
in particular, $x_{*} \in \sX$ is an optimal solution of \eqref{intro:pb-un} if and only if $\nabla f \left( x_{*} \right) = 0$.
	The system \eqref{ds:PD-AVD} becomes
	\begin{equation*}
	\begin{dcases}
	\ddot{x} \left( t \right) + \dfrac{\alpha}{t} \dot{x} \left( t \right) + \nabla f \left( x \left( t \right) \right)    		& = 0 \\
	\ddot{\lambda} \left( t \right) + \dfrac{\alpha}{t} \dot{\lambda} \left( t \right)	& = 0 \\
\Bigl( x \left( t_{0} \right) , \lambda \left( t_{0} \right) \Bigr) 			= \Bigl( x_{0} , \lambda_{0} \Bigr) \textrm{ and }
\Bigl( \dot{x} \left( t_{0} \right) , \dot{\lambda} \left( t_{0} \right) \Bigr) = \Bigl( \dot{x}_{0} , \dot{\lambda}_{0} \Bigr)
	\end{dcases}.
	\end{equation*}
The dynamical system in $x$ is reads
	\begin{equation*}
	\begin{dcases}
	\ddot{x} \left( t \right) + \dfrac{\alpha}{t} \dot{x} \left( t \right) + \nabla f \left( x \left( t \right) \right)   = 0 \\
x(t_0) = x_0 \ \textrm{ and } \dot{x}(t_0) = \dot{x}_0
	\end{dcases},
	\end{equation*}
for $\alpha \geq 3$, and is nothing else than Nesterov's accelerated gradient system. The trajectory generated by the system in $\lambda$ is $\lambda(t) = \frac{\dot{\lambda}_0 t_0^\alpha}{1-\alpha} t^{1-\alpha} + \lambda_0 - \frac{\dot{\lambda}_0 t_0}{1-\alpha}$ for every $t \geq t_{0}$. The parameters $\beta$ and $\theta$ play no role in the system.
	
If $\alpha \geq 3$, then Theorem \ref{thm:rate} (ii) gives that $f(x(t))$ converges to $f_*$ with a rate of convergence of $\bO \left( \dfrac{1}{t^{2}} \right)$ as $t \to + \infty$, which is the rate reported in \cite{Attouch-Chbani-Peypouquet-Redont,Su-Boyd-Candes} for \eqref{ds:AVD}.

If $\alpha > 3$, then Theorem \ref{weakconvergence} gives that the trajectory $x(t)$ converges weakly to an optimal solution of \eqref{intro:pb-un}, as $t \to + \infty$, which agrees with what it has been reported in \cite{Attouch-Chbani-Peypouquet-Redont} for \eqref{ds:AVD}.

Finally, we mention that the convergence of the trajectory in the critical case $\alpha = 3$ (\cite{Attouch-Chbani-Peypouquet-Redont,Su-Boyd-Candes}) is still an open question, as it is the convergence of the iterates of the original Nesterov's acceleration algorithm (\cite{Attouch-Cabot:18,FISTA,Nesterov:83}).
\end{rmk}

\appendix

\section{Appendix}

We collect here some results which are used in the proof of the convergence of the trajectory of the dynamical system \eqref{ds:PD-AVD}.

\begin{lem}
	\label{lem:est-int}
	Let $0 < \delta \leq r \leq + \infty$ and $h \colon [\delta, +\infty) \to [0,+\infty)$ be a continuous function.  For every $\alpha > 1$ it holds
	\begin{equation*}
	\int_{\delta}^{r} \dfrac{1}{t^{\alpha}} \left( \int_{\delta}^{t} s^{\alpha - 1} h \left( s \right) ds \right) dt \leq \dfrac{1}{\alpha - 1} \int_{\delta}^{r} h \left( t \right) dt .
	\end{equation*}
If $r = + \infty$, then equality holds.
\end{lem}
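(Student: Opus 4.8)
The plan is to read the left-hand side as an iterated integral over the triangular region $\{(s,t) : \delta \le s \le t \le r\}$ and to interchange the order of integration. The integrand $t^{-\alpha} s^{\alpha-1} h(s)$ is nonnegative on this region, since $h \ge 0$, $\alpha > 1$ and $s,t > 0$, so Tonelli's theorem applies \emph{irrespective} of whether $r$ is finite or infinite, and the swap is fully justified with no integrability hypothesis on $h$. Concretely, I would first write
\[
\int_{\delta}^{r} \dfrac{1}{t^{\alpha}} \left( \int_{\delta}^{t} s^{\alpha - 1} h \left( s \right) ds \right) dt
= \int_{\delta}^{r} s^{\alpha - 1} h \left( s \right) \left( \int_{s}^{r} \dfrac{1}{t^{\alpha}} dt \right) ds .
\]

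Next I would evaluate the inner integral by elementary power-function integration. For finite $r$,
\[
\int_{s}^{r} \dfrac{1}{t^{\alpha}} dt = \dfrac{1}{\alpha - 1} \left( \dfrac{1}{s^{\alpha - 1}} - \dfrac{1}{r^{\alpha - 1}} \right) ,
\]
while for $r = +\infty$ the second term vanishes and the value is $\dfrac{1}{(\alpha - 1) s^{\alpha - 1}}$. Substituting back and cancelling the factor $s^{\alpha - 1}$, the right-hand side becomes
\[
\dfrac{1}{\alpha - 1} \int_{\delta}^{r} h \left( s \right) \left( 1 - \dfrac{s^{\alpha - 1}}{r^{\alpha - 1}} \right) ds
\]
for finite $r$, and equals exactly $\dfrac{1}{\alpha - 1} \int_{\delta}^{+\infty} h \left( s \right) ds$ when $r = +\infty$. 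The infinite case thus already delivers the asserted equality.

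To finish the inequality for finite $r$, I would observe that since $\alpha - 1 > 0$ and $\delta \le s \le r$, the ratio $s^{\alpha - 1}/r^{\alpha - 1}$ lies in $[0,1]$, so that $0 \le 1 - s^{\alpha - 1}/r^{\alpha - 1} \le 1$. Combining this with $h \ge 0$ gives
\[
\dfrac{1}{\alpha - 1} \int_{\delta}^{r} h \left( s \right) \left( 1 - \dfrac{s^{\alpha - 1}}{r^{\alpha - 1}} \right) ds \le \dfrac{1}{\alpha - 1} \int_{\delta}^{r} h \left( s \right) ds ,
\]
which is precisely the claimed bound.

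The only genuinely delicate point is the legitimacy of the Fubini--Tonelli interchange when $r = +\infty$, where the domain is unbounded; but the nonnegativity of the integrand is exactly what Tonelli's theorem needs, so this obstacle dissolves and no further care is required. Everything else is routine calculus, and the equality in the case $r = +\infty$ falls out automatically because the discarded boundary term $r^{1-\alpha}$ is absent there.
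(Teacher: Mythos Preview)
Your proof is correct and follows essentially the same route as the paper: interchange the order of integration over the triangle $\{\delta \le s \le t \le r\}$, evaluate $\int_s^r t^{-\alpha}\,dt$ explicitly, and bound (or, when $r=+\infty$, keep) the resulting expression. The only minor difference is that you invoke Tonelli explicitly via nonnegativity, whereas the paper simply cites Fubini; your justification is in fact the more careful one.
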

\begin{proof}
	We have
	\begin{align*}
	\int_{\delta}^{r} \dfrac{1}{t^{\alpha}} \left( \int_{\delta}^{t} s^{\alpha - 1} h \left( s \right) ds \right) dt
	& = \int_{\delta}^{r} \int_{\delta}^{t} \dfrac{1}{t^{\alpha}}  s^{\alpha - 1} h \left( s \right) ds dt
	= \iint_{\mathcal{A}} \dfrac{1}{t^{\alpha}} s^{\alpha - 1} h \left( s \right) d \mathcal{A} ,
	\end{align*}
	where
	\begin{align*}
	\mathcal{A} 
	:= \left\lbrace \left( s , t \right) \colon \delta \leq t \leq r , \delta \leq s \leq t \right\rbrace
	= \left\lbrace \left( t , s \right) \colon \delta \leq s \leq r , s \leq t \leq r \right\rbrace .
	\end{align*}
	Thus, by applying Fubini's theorem,
	\begin{align*}
	\int_{\delta}^{r} \dfrac{1}{t^{\alpha}} \left( \int_{\delta}^{t} s^{\alpha - 1} h \left( s \right) ds \right) dt
	& = \int_{\delta}^{r} \int_{s}^{r} \dfrac{1}{t^{\alpha}} s^{\alpha - 1} h \left( s \right) dt ds
	= \int_{\delta}^{r} s^{\alpha - 1} h \left( s \right) \left( \int_{s}^{r} \dfrac{1}{t^{\alpha}} dt \right) ds ,
	\end{align*}
	from which we get the desired estimate, as
	\begin{equation*}
	\int_{s}^{r} \dfrac{1}{t^{\alpha}} dt = \dfrac{1}{\alpha - 1} \left( \dfrac{1}{s^{\alpha - 1}} - \dfrac{1}{r^{\alpha - 1}} \right) \leq \dfrac{1}{\left( \alpha - 1 \right) s^{\alpha - 1}} .
	\end{equation*}
	If $r := + \infty,$ then the above inequality is an equality.
\end{proof}

The following result can be found in \cite[Lemma 5.2]{Abbas-Attouch-Svaiter}.
\begin{lem}
	\label{lem:lim-0}
	Let $\delta > 0$, $1 \leq p < \infty$ and $1 \leq q \leq \infty$.
	Suppose that $F \in \sL^{p} \left( \left[ \delta , + \infty \right) \right)$ is a locally absolutely continuous nonnegative function, $G \in \sL^{q} \left( \left[ \delta , + \infty \right) \right)$ and
	\begin{equation*}
	\dfrac{d}{dt} F \left( t \right) \leq G \left( t \right) \quad \mbox{for almost every} \quad t \geq \delta.
	\end{equation*}
Then $\lim\limits_{t \to + \infty} F \left( t \right) = 0$.
\end{lem}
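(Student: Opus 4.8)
The plan is to bound the value $F(t+h)$, for a small fixed step $h>0$, by the local $\sL^{p}$-mass of $F$ and the local $\sL^{q}$-mass of $G$ on the window $[t,t+h]$, both of which are tails of convergent integrals and therefore vanish as $t\to+\infty$. First I would invoke the local absolute continuity of $F$, so that the fundamental theorem of calculus applies and, for any $s\in[t,t+h]$,
$$F(t+h)=F(s)+\int_{s}^{t+h}\dot{F}(\tau)\,d\tau\leq F(s)+\int_{t}^{t+h}\left\lvert G(\tau)\right\rvert\,d\tau,$$
where I used $\dot{F}\leq G\leq\lvert G\rvert$ almost everywhere together with $s\geq t$. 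Letting $p',q'$ denote the conjugate exponents of $p,q$, Hölder's inequality applied to the last integral gives the control $\int_{t}^{t+h}\lvert G\rvert\leq\lVert G\rVert_{\sL^{q}([t,t+h])}\,h^{1/q'}$.

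Next I would average the resulting inequality $F(t+h)\leq F(s)+\lVert G\rVert_{\sL^{q}([t,t+h])}h^{1/q'}$ (whose right-hand side bound is independent of $s$) over $s\in[t,t+h]$, that is, integrate in $s$ and divide by $h$, and apply Hölder once more to the average of $F$. This yields
$$F(t+h)\leq\frac{1}{h}\int_{t}^{t+h}F(s)\,ds+\lVert G\rVert_{\sL^{q}([t,t+h])}\,h^{1/q'}\leq h^{-1/p}\lVert F\rVert_{\sL^{p}([t,t+h])}+\lVert G\rVert_{\sL^{q}([t,t+h])}\,h^{1/q'}.$$

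I would then send $t\to+\infty$ with $h$ fixed. Because $F\in\sL^{p}$ with $p<+\infty$, the tail satisfies $\lVert F\rVert_{\sL^{p}([t,t+h])}\to0$; when $q<+\infty$ the same reasoning gives $\lVert G\rVert_{\sL^{q}([t,t+h])}\to0$. Hence both summands vanish, so $\limsup_{t\to+\infty}F(t)=0$, and since $F\geq0$ forces $\liminf_{t\to+\infty}F(t)\geq0$, we conclude $\lim_{t\to+\infty}F(t)=0$.

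The main obstacle is the endpoint case $q=+\infty$, in which the tail $\lVert G\rVert_{\sL^{\infty}([t,t+h])}$ need not tend to zero. Here I would use $q'=1$, so the second term is dominated by $\lVert G\rVert_{\sL^{\infty}([\delta,+\infty))}\,h$, giving only $\limsup_{t\to+\infty}F(t)\leq\lVert G\rVert_{\sL^{\infty}([\delta,+\infty))}\,h$ for every $h>0$. Since $h$ is an arbitrary positive parameter that entered the argument only through the window length, I would finish by letting $h\to0$, which restores $\limsup_{t\to+\infty}F(t)=0$, and the nonnegativity of $F$ again gives $\lim_{t\to+\infty}F(t)=0$.
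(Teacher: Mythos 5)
Your proof is correct; note that there is no in-paper argument to compare it against, since the authors do not prove Lemma \ref{lem:lim-0} at all but quote it from \cite[Lemma 5.2]{Abbas-Attouch-Svaiter}. Judged on its own merits, every step is sound: local absolute continuity is exactly what licenses the fundamental theorem of calculus on $[s,t+h]$; the inequality $\dot{F}\le G\le\lvert G\rvert$ together with $[s,t+h]\subseteq[t,t+h]$ justifies passing to the window integral of $\lvert G\rvert$; both applications of H\"older's inequality (with $\lVert 1\rVert_{\sL^{q'}([t,t+h])}=h^{1/q'}$ and $\lVert 1\rVert_{\sL^{p'}([t,t+h])}=h^{1/p'}$) are valid across the full ranges $1\le p<\infty$, $1\le q\le\infty$, including the endpoints $p=1$ and $q=1$; and the window norms $\lVert F\rVert_{\sL^{p}([t,t+h])}$ and, when $q<\infty$, $\lVert G\rVert_{\sL^{q}([t,t+h])}$ vanish as $t\to+\infty$ because they are dominated by tails of convergent integrals. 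The decisive structural point is your order of limits --- first $t\to+\infty$ with the window length $h$ fixed, and only then $h\to 0$ --- which is precisely what rescues the endpoint $q=\infty$, where the window norm of $G$ does not decay and one must shrink the window instead; a single-limit argument would break exactly there, and you flag and handle this correctly. You also correctly invoke the nonnegativity of $F$ to upgrade $\limsup_{t\to+\infty}F(t)\le 0$ to $\lim_{t\to+\infty}F(t)=0$. For comparison, proofs of lemmas of this type in the literature are often run by contradiction (a sequence $t_{n}\to+\infty$ with $F(t_{n})\ge\varepsilon$ forces, via the same differential inequality and H\"older estimate, $F\ge\varepsilon/2$ on windows of fixed positive length, contradicting $F\in\sL^{p}$); your direct averaging argument is at least as tidy and yields the conclusion without extracting subsequences.
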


Opial’s Lemma \cite{Opial} in continuous form is used in the proof of the weak convergence of the trajectory of \eqref{ds:PD-AVD} to a primal-dual solution of \eqref{intro:pb}. This argument was first used in \cite{Bruck} to establish the convergence of nonlinear contraction semigroups.
\begin{lem}
	\label{lem:Opial}
	Let $S$ be a nonempty subset of $\sX$ and $z \colon \left[ t_{0} , + \infty \right) \to \sX$.
	Assume that
	\begin{enumerate}
		\item 
		for every $z_{*} \in S$, $\lim\limits_{t \to + \infty} \left\lVert z \left( t \right) - z_{*} \right\rVert$ exists;
		
		\item 
		every weak sequential cluster point of the trajectory $z \left( t \right)$ as $t \to + \infty$ belongs to $S$.
	\end{enumerate}
Then $z$ converges weakly to a point in $S$ as $t \to + \infty$.
\end{lem}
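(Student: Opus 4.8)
The plan is to run the classical Opial argument in three stages: first extract boundedness of the trajectory from hypothesis (i), then use Hilbert-space weak sequential compactness together with hypothesis (ii) to produce a weak sequential cluster point lying in $S$, and finally show that such a cluster point is \emph{unique}, which upgrades the existence of weak cluster points to genuine weak convergence of $z(t)$ as $t \to +\infty$.

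First I would fix an arbitrary $z_* \in S$ and invoke hypothesis (i): since $t \mapsto \left\lVert z(t) - z_* \right\rVert$ converges, it is bounded on $[t_{0}, +\infty)$, and by the triangle inequality so is $z(\cdot)$. Because $\sX$ is a Hilbert space, bounded sets are weakly sequentially relatively compact, so along any sequence $\tau_n \to +\infty$ a subsequence of $z(\tau_n)$ converges weakly; in particular at least one weak sequential cluster point exists, and by hypothesis (ii) every such cluster point lies in $S$.

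The crux is uniqueness. Suppose $\bar{z}_1$ and $\bar{z}_2$ are two weak sequential cluster points, realized along $t_n \to +\infty$ and $s_n \to +\infty$ with $z(t_n) \rightharpoonup \bar{z}_1$ and $z(s_n) \rightharpoonup \bar{z}_2$; both belong to $S$ by (ii). The key observation is the expansion
\begin{equation*}
\left\lVert z(t) - \bar{z}_1 \right\rVert^{2} - \left\lVert z(t) - \bar{z}_2 \right\rVert^{2} = 2 \left\langle z(t) , \bar{z}_2 - \bar{z}_1 \right\rangle + \left\lVert \bar{z}_1 \right\rVert^{2} - \left\lVert \bar{z}_2 \right\rVert^{2} ,
\end{equation*}
whose left-hand side converges as $t \to +\infty$ by hypothesis (i) applied to $z_* = \bar{z}_1$ and $z_* = \bar{z}_2$. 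Hence $\left\langle z(t) , \bar{z}_2 - \bar{z}_1 \right\rangle$ has a finite limit $\ell$. Evaluating $\ell$ along $t_n$ (using $z(t_n) \rightharpoonup \bar{z}_1$) gives $\ell = \left\langle \bar{z}_1 , \bar{z}_2 - \bar{z}_1 \right\rangle$, while evaluating it along $s_n$ gives $\ell = \left\langle \bar{z}_2 , \bar{z}_2 - \bar{z}_1 \right\rangle$; subtracting yields $\left\lVert \bar{z}_2 - \bar{z}_1 \right\rVert^{2} = 0$, so $\bar{z}_1 = \bar{z}_2$.

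With a unique weak sequential cluster point $\bar{z} \in S$, I would conclude by a standard subsequence argument: every $z(\tau_n)$ with $\tau_n \to +\infty$ is bounded, hence admits a weakly convergent subsequence whose limit must be $\bar{z}$; since this limit is independent of the chosen subsequence, $z(\tau_n) \rightharpoonup \bar{z}$, and as this holds for every such sequence, $z(t) \rightharpoonup \bar{z}$ as $t \to +\infty$ with $\bar{z} \in S$. The main obstacle is precisely the uniqueness step: everything hinges on the convergence of $\left\langle z(t) , \bar{z}_2 - \bar{z}_1 \right\rangle$ furnished by hypothesis (i), and on the care needed to pass from weak \emph{sequential} cluster points to a genuine continuous-time weak limit.
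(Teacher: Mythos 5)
Your proof is correct and complete: it is precisely the classical Opial argument (boundedness from (i), weak sequential compactness plus (ii) for existence of cluster points in $S$, and the scalar-product expansion to force uniqueness of the weak cluster point, followed by the sub-subsequence argument). The paper itself does not prove Lemma \ref{lem:Opial} but cites it as a known result from the literature (Opial, and Bruck for the continuous-time use), and your argument is exactly the standard proof behind that citation.
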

	
Statement \eqref{rate-KKT:split} in Theorem \ref{thm:rate-KKT} suggests that the mapping $(x,\lambda) \mapsto (\nabla f(x), A^*\lambda)$ is constant along the set $\sol$ of primal-dual optimal solutions of \eqref{intro:pb}. This is confirmed by the following result.
\begin{prop}
Consider the optimization problem \eqref{intro:pb}. If $\nabla f$ is $\ell-$Lipschitz continuous, 
then for every $\left( x_{*} , \lambda_{*} \right), \left( x_{**} , \lambda_{**} \right) \in \sol$ it holds
$$\nabla f \left( x_{*} \right) = \nabla f \left( x_{**} \right) \quad \mbox{and} \quad A^*\lambda_* =  A^* \lambda_{**}.$$
\end{prop}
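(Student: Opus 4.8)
The plan is to reduce both identities to a single one and then exploit the cocoercivity estimate \eqref{pre:f-bound}. First I would record the optimality conditions \eqref{intro:opt-Lag} for the two pairs, namely $\nabla f \left( x_{*} \right) + A^* \lambda_{*} = 0$ and $\nabla f \left( x_{**} \right) + A^* \lambda_{**} = 0$, together with the feasibilities $Ax_{*} = Ax_{**} = b$. Subtracting the two stationarity relations gives $A^* \lambda_{*} - A^* \lambda_{**} = - \bigl( \nabla f \left( x_{*} \right) - \nabla f \left( x_{**} \right) \bigr)$, so the two claimed equalities are equivalent; it therefore suffices to prove $\nabla f \left( x_{*} \right) = \nabla f \left( x_{**} \right)$.

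Next I would observe that both $x_{*}$ and $x_{**}$ are primal optimal, hence $f \left( x_{*} \right) = f \left( x_{**} \right) = f_{*}$. The crucial computation is that the relevant cross term vanishes: using $\nabla f \left( x_{*} \right) = - A^* \lambda_{*}$ and $Ax_{**} = Ax_{*} = b$, one obtains
\[
\left\langle \nabla f \left( x_{*} \right) , x_{**} - x_{*} \right\rangle = - \left\langle \lambda_{*} , A \left( x_{**} - x_{*} \right) \right\rangle = - \left\langle \lambda_{*} , b - b \right\rangle = 0 .
\]

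With these two facts in hand I would apply the leftmost inequality in \eqref{pre:f-bound} with the roles $x := x_{**}$ and $y := x_{*}$, which reads
\[
\dfrac{1}{2 \ell} \left\lVert \nabla f \left( x_{**} \right) - \nabla f \left( x_{*} \right) \right\rVert ^{2} \leq f \left( x_{**} \right) - f \left( x_{*} \right) - \left\langle \nabla f \left( x_{*} \right) , x_{**} - x_{*} \right\rangle .
\]
The right-hand side equals $f_{*} - f_{*} - 0 = 0$, forcing $\nabla f \left( x_{**} \right) = \nabla f \left( x_{*} \right)$. The dual statement $A^* \lambda_{*} = A^* \lambda_{**}$ then follows at once from the equivalence noted in the first step, since $A^* \lambda_{*} = - \nabla f \left( x_{*} \right) = - \nabla f \left( x_{**} \right) = A^* \lambda_{**}$.

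There is no serious obstacle here; the only point requiring care is recognizing that feasibility of \emph{both} primal solutions forces the inner product $\left\langle \nabla f \left( x_{*} \right) , x_{**} - x_{*} \right\rangle$ to vanish, which is precisely what upgrades the nonnegative cocoercivity gap to an identically zero upper bound. The Lipschitz continuity of $\nabla f$ enters solely to guarantee the lower bound in \eqref{pre:f-bound}.
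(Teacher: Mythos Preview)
Your proof is correct and follows essentially the same route as the paper: both arguments hinge on the observation that feasibility of $x_{*}$ and $x_{**}$ forces the relevant inner product to vanish, so that a cocoercivity-type lower bound on $\left\lVert \nabla f(x_{**}) - \nabla f(x_{*}) \right\rVert^{2}$ collapses to zero. The only cosmetic difference is that the paper invokes the Baillon--Haddad theorem to write $\ell^{-1} \left\lVert \nabla f(x_{**}) - \nabla f(x_{*}) \right\rVert^{2} \leq \left\langle \nabla f(x_{**}) - \nabla f(x_{*}), x_{**} - x_{*} \right\rangle$ directly, whereas you use the one-sided estimate \eqref{pre:f-bound} together with $f(x_{*}) = f(x_{**})$; your version has the mild advantage of being entirely self-contained within the paper's stated preliminaries.
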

\begin{proof}
	Let $\left( x_{*} , \lambda_{*} \right), \left( x_{**} , \lambda_{**} \right) \in \sol$. We have $Ax_{*} = Ax_{**} = b$.
According to the Baillon-Haddad theorem \cite[Corollary 18.17]{Bauschke-Combettes:book}, $\nabla f$ is $\ell^{-1}$-cocoercive, which means
	\begin{align*}
	\dfrac{1}{\ell} \left\lVert \nabla f \left( x_{**} \right) - \nabla f \left( x_{*} \right) \right\rVert ^{2}
	& \leq \left\langle \nabla f \left( x_{**} \right) - \nabla f \left( x_{*} \right) , x_{**} - x_{*} \right\rangle \nonumber \\
	& = - \left\langle A^* \left( \lambda_{**} - \lambda_{*} \right) , x_{**} - x_{*} \right\rangle  =  \left\langle \lambda_{*} - \lambda_{**}, Ax_{**} - Ax_{*} \right\rangle  = 0 ,
	\end{align*}
	where the first equation comes from \eqref{intro:opt-Lag}. This yields $\nabla f \left( x_{**} \right) = \nabla f \left( x_{*} \right)$, which, again via  \eqref{intro:opt-Lag},  gives $A^*\lambda_* =  A^* \lambda_{**}$.
\end{proof}

{\bf Acknowledgements.} The authors are thankful to Ern\"o Robert Csetnek (University of Vienna) for comments and remarks which have improved the quality of the paper.
	
%
%
%
%


\end{document}